
%
\documentclass[12pt,final]{amsart}

\usepackage{mathrsfs}
\usepackage{txfonts}
\usepackage{amssymb,dsfont}
\usepackage{enumerate}
\usepackage{slashed}
\usepackage{fancyhdr}
\usepackage{aliascnt}
\usepackage[backref,pagebackref,linkcolor=red,citecolor=green,filecolor=magenta,urlcolor=cyan]{hyperref}
\usepackage[square,super,sort,longnamesfirst]{natbib}
\usepackage[dvips,dvipsnames]{xcolor}
\usepackage[dvips]{graphicx}
\usepackage[all,cmtip]{xy}
\usepackage{pdfsync}

\makeatletter
\newcommand{\refcheckize}[1]{%
  \expandafter\let\csname @@\string#1\endcsname#1%
  \expandafter\DeclareRobustCommand\csname relax\string#1\endcsname[1]{%
    \csname @@\string#1\endcsname{##1}\wrtusdrf{##1}}%
  \expandafter\let\expandafter#1\csname relax\string#1\endcsname
}
\makeatother



\textwidth 16.20cm \textheight 22cm \topmargin 0.1cm
\oddsidemargin 0.1cm \evensidemargin 0.1cm
\parskip 0.0cm


\newcommand{\norm}[1]{\left\lVert#1\right\rVert}
\newcommand{\abs}[1]{\left\lvert#1\right\rvert}
\newcommand{\set}[1]{\left\{#1\right\}}
\newcommand{\hin}[2]{\left\langle#1,#2\right\rangle}
\newcommand{\rin}[2]{\left(#1,#2\right)}
\newcommand{\field}[1]{\mathbb{#1}}
\newcommand{\R}{\field{R}}
\newcommand{\Com}{\field{C}}
\newcommand{\Lg}[1]{\mathrm{#1}}
\newcommand{\Cl}{\Lg{Cl}}%
\newcommand{\La}[1]{\mathfrak{#1}}
%
%
\newcommand{\D}{\slashed{D}}
\newcommand{\dirac}{\slashed{\partial}}
\newcommand{\RE}{\operatorname{Re}}
\newcommand{\IM}{\operatorname{Im}}

\newcommand{\dif}{\mathrm d}
\newcommand{\diff}{\,\mathrm d}

\newcommand{\vect}[1]{\mathbf{#1}}
\newcommand{\To}{\longrightarrow}
\newcommand{\Rmn}[1]{\uppercase\expandafter{\romannueral#1}}

\DeclareMathOperator{\Div}{div}
\DeclareMathOperator{\trace}{tr}

\DeclareMathOperator{\dist}{dist}


\DeclareMathOperator{\End}{End}
\DeclareMathOperator{\Id}{Id}


\theoremstyle{plain}
\newtheorem{theorem}{Theorem}[section]

\newaliascnt{lem}{theorem}
\newtheorem{lem}[lem]{Lemma}
 \aliascntresetthe{lem}

\newaliascnt{cor}{theorem}
\newtheorem{cor}[cor]{Corollary}
 \aliascntresetthe{cor}

\newaliascnt{prop}{theorem}
\newtheorem{prop}[prop]{Proposition}
 \aliascntresetthe{prop}

\theoremstyle{remark}
\newtheorem{rem}{Remark}[section]

\newtheorem*{claim}{Claim}

\theoremstyle{definition}
\newtheorem{defn}{Definition}[section]

\numberwithin{equation}{section}


\begin{document}

\title[Estimates for Dirac equations]{Estimates for solutions of  Dirac equations and an application to a geometric elliptic-parabolic problem}

\author[Chen]{Qun Chen}
\address{School of Mathematics and Statistics \\ WuHan University\\ 430072 Hubei, China}
\email{\href{mailto:qunchen@whu.edu.cn}{qunchen@whu.edu.cn}}

\author[Jost]{J\"urgen Jost}
\address{Max Planck Institute for Mathematics in the Sciences\\ Inselstrasse 22\\ 04103 Leipzig, Germany}
\email{\href{mailto:jost@mis.mpg.de}{jost@mis.mpg.de}}

\author[Sun]{Linlin Sun}
\address{School of Mathematics and Statistics \\ WuHan University\\ 430072 Hubei, China}
\email{\href{mailto:sunll101@whu.edu.cn}{sunll101@whu.edu.cn}}

\author[Zhu]{Miaomiao Zhu}
\address{School of Mathematical Sciences, Shanghai Jiao Tong University\\ 800 Dongchuan Road \\ Shanghai, 200240 \\China}
\email{\href{mailto:mizhu@sjtu.edu.cn}{mizhu@sjtu.edu.cn}}

\thanks{
The research leading
to these results has received funding from the European Research
Council under the European Union's Seventh Framework Programme
(FP7/2007-2013) / ERC grant agreement no. 267087.
Qun Chen is partially supported by  NSFC   of
 China. Linlin Sun is partially supported by  CSC   of
 China. Miaomiao Zhu is supported in part by NSFC of China (No. 11601325). The authors thank the Max Planck Institute for Mathematics in the Sciences for good working conditions when this work was
carried out. Miaomiao Zhu would like to thank Professor Bernd Ammann for discussions about Dirac operators. }
\subjclass[2010]{58E20, 35J56, 35J57, 53C27}
\date{\today}

\maketitle
\begin{abstract}
We develop  estimates for the solutions and derive existence and uniqueness results of various local boundary value problems for Dirac equations that improve all relevant results known in the literature. With these estimates at hand, we derive a general existence, uniqueness and regularity theorem for solutions of Dirac equations with such boundary conditions. We also apply these estimates to a new nonlinear elliptic-parabolic problem, the Dirac-harmonic heat flow on Riemannian spin manifolds. This problem is motivated by the supersymmetric nonlinear $\sigma$-model and combines a harmonic heat flow type equation with a  Dirac equation that depends nonlinearly on the flow.

\vskip12pt

\noindent{\small {\it Keywords and phrases}: Dirac equation,  existence, uniqueness, chiral boundary condition, Dirac-harmonic map flow.}
\end{abstract}

\vskip12pt

\par

\vskip0.2cm

\section{Introduction}

The Dirac equation is one of the mathematically most important and fruitful structures from physics.  As the name indicates, it was first introduced by Dirac \cite{Dirac1928quantum}. Dirac's original equation is hyperbolic, but the elliptic version, which this paper is concerned with,  appears naturally in geometry. Both solutions on closed manifolds and on manifolds with boundary have found important applications. In this paper, we shall systematically investigate the boundary value problem and derive results that are sharper and stronger than all relevant results known prior to our work. We shall then provide a new application which depends on our regularity, existence and uniqueness results and which could not have been derived with the results known in the literature.

The mathematical history of  boundary value problems for Dirac equations started with the work of Atiyah, Patodi and Singer.
 In their seminal papers \cite{Atiyah1975spectral,Atiyah1975spectral2,Atiyah1976spectral}, they
 introduced a nonlocal  boundary condition for first order elliptic differential operators and established
an index theorem on compact manifolds with boundary. This constitutes a cornerstone of the theory of first order elliptic boundary value problems.

In recent years, important progress has been achieved on various extensions, generalizations and simplifications of the  Atiyah-Patodi-Singer theory and their applications.  In particular,  in the works of Bismut and Cheeger \cite{Bismut1990families},   Boo{\ss}-Bavnbek and Wojciechowski \cite{Booss1993elliptic},  Br\"uning and Lesch  \cite{Bruening2001boundary,Bruening1999spectral},
Bartnik and Chrusciel \cite{Bartnik2005boundary},  Ballmann, Br\"uning and Carron \cite{Ballmann2008regularity},  B\"ar and Ballmann \cite{Bar2012boundary}, etc.,    regularity theorems, index theorems and Fredholm theorems for such kind of elliptic
 boundary value problems have been established.

Although the index theorems and Fredholm theorems give us information or criteria for the existence of solutions, in many cases (for instance
the proof of the positive energy theorem e.g. \cite{Witten1981new, Parker1982on, Gibbons1983positive, Herzlich1997penrose} and
Dirac-harmonic maps, see below),
for an elliptic boundary problem and  given  boundary data, one needs  more precise results about the existence and uniqueness of solutions,
and usually this is based on appropriate global elliptic estimates for the solutions. This is our motivation for studying
the boundary values problems for Dirac equations.

In this paper, we first consider the existence and uniqueness for Dirac equations under a class of local elliptic boundary value conditions $\mathcal{B}$
 (including chiral boundary conditions, MIT bag boundary conditions and J-boundary conditions, see the definitions in \autoref{sec:dh}, c.f. \cite{Bar2012boundary, Hijazi2002eigenvalue}).
A Dirac bundle $E$ over a Riemannian manifold $M^m$ ($m\geq2$) is a Hermitian metric vector bundle of left Clifford modules over $\Cl(M)$,
such that the multiplication by unit vectors in $TM$ is orthogonal and the covariant derivative is a module derivation.
Let $\nabla_0$ be a smooth Dirac connection on $E$ and consider another Dirac connection of the form $\nabla=\nabla_0+\Gamma$,
that is,  $\Gamma \in \Omega^1(\Lg{Ad}(E))$ commutes with the Clifford multiplication. We shall work with the {\it Dirac connection spaces}
$\mathfrak{D}^{p}(E)$ defined by the norm
$$\norm{\Gamma}_p\coloneqq\norm{\Gamma}_{L^{2p}(M)}+\norm{\dif\Gamma}_{L^p(M)}.$$
In particular, $\Gamma$ need not be smooth.

The  Dirac operator associated with the Dirac connection $\nabla$ is defined by
\begin{equation*}
\D\coloneqq e_i\cdot\nabla_{e_i},
\end{equation*}
where $e_i\cdot$ denotes the Clifford multiplication, and  $\set{e_i}$ is a local orthonormal frame on $M$. Here and in the sequel, we use the usual summation convention.
We will establish the existence and uniqueness of solutions of the following Dirac equations:
\begin{equation}\label{eq:Dirac1}
\begin{cases}
\D\psi=\varphi,&M;\\
\mathcal{B}\psi=\mathcal{B}\psi_0,&\partial M,
\end{cases}
\end{equation}
where $\varphi\in L^p(E),\mathcal{B}\psi_0\in W^{1-1/p,p}(E\vert_{\partial M})$. Here and in the sequel, all of the Sobolev spaces of sections of $E$
are associated with the fixed smooth Dirac connection $\nabla_0$.  Setting
\begin{equation*}
p^*>1,\quad \text{if}\ m=2,\quad p^* \geq (3m-2)/4,\quad\text{if}\ m>2.
\end{equation*}
We have the following

\begin{theorem} \label{thm:dirac}Let $E$ be a Dirac bundle over a compact $m$-dimensional ($m\geq2$) Riemannian  manifold $M^m$ with boundary.
Suppose that $\Gamma\in\mathfrak{D}^{p^*}(E)$, then for any $1<p<p^*$, \eqref{eq:Dirac1} admits a unique solution
$\psi\in W^{1,p}(E)$. Moreover, $\psi$ satisfies the following estimate
\begin{equation} \label{main estimate}
\norm{\psi}_{W^{1,p}(E)}\leq c\left(\norm{\varphi}_{L^p(E)}+\norm{\mathcal{B}\psi_0}_{W^{1-1/p,p}(E\vert_{\partial M})}\right),
\end{equation}
where $c=c\left(p,\norm{\Gamma}_{p^*}\right)>0$.
\end{theorem}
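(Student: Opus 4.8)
The plan is to prove the theorem in three stages: first establish the result for the smooth reference connection $\nabla_0$ (or more precisely for the case $\Gamma$ smooth), then upgrade to the case $\Gamma\in\mathfrak{D}^{p^*}(E)$ by a perturbation/continuity argument, and finally deduce uniqueness and the quantitative estimate. For the first stage I would rely on the classical theory of local elliptic boundary conditions for Dirac-type operators (as in B\"ar--Ballmann \cite{Bar2012boundary}): the boundary conditions in the class $\mathcal{B}$ are elliptic, so $\D_0$ with boundary condition $\mathcal{B}$ is a Fredholm operator $W^{1,p}(E)\to L^p(E)\oplus W^{1-1/p,p}(E|_{\partial M})$, and for the problem to have a \emph{unique} solution one needs the kernel and cokernel to vanish. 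This is where I expect the genuine content to lie, so it deserves separate comment below; granting it, one has the a priori estimate $\norm{\psi}_{W^{1,p}}\le c(\norm{\D_0\psi}_{L^p}+\norm{\mathcal{B}\psi}_{W^{1-1/p,p}(\partial M)})$ with $c=c(p,M,E,\nabla_0)$ for all $1<p<\infty$.

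Next I would write $\D=\D_0+\Gamma\cdot$, where the zeroth-order term is Clifford multiplication by $\Gamma$; the equation \eqref{eq:Dirac1} becomes $\D_0\psi=\varphi-\Gamma\cdot\psi$ with the same boundary condition. The key analytic point is that the map $\psi\mapsto\Gamma\cdot\psi$ is a \emph{compact} operator from $W^{1,p}(E)$ to $L^p(E)$ when $\Gamma\in\mathfrak{D}^{p^*}(E)$ and $p<p^*$: by Sobolev embedding $W^{1,p}(E)\hookrightarrow L^q(E)$ for suitable $q$, and $\norm{\Gamma\cdot\psi}_{L^p}\le\norm{\Gamma}_{L^{2p}}\norm{\psi}_{L^{q}}$ by H\"older, with the numerology $p^*\ge(3m-2)/4$ chosen exactly so that the embedding is \emph{compact} (strictly better than the borderline). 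Hence $\D_0+\Gamma\cdot$ with boundary condition $\mathcal{B}$ is a compact perturbation of a Fredholm operator of index zero, so it is itself Fredholm of index zero, and surjectivity will follow once injectivity is shown. Injectivity: if $\D\psi=0$ and $\mathcal{B}\psi=0$ one integrates by parts — the boundary conditions in $\mathcal{B}$ are precisely those for which the boundary term in the Green's formula for $\D$ is nonnegative (or vanishes), so one gets $\int_M|\nabla\psi|^2\le$ (curvature terms)$\cdot\int_M|\psi|^2$ via a Weitzenb\"ock/Lichnerowicz-type identity, plus elliptic regularity to bootstrap $\psi$ to $W^{1,2}\cap W^{1,p}$; a unique continuation or Rellich-type argument then forces $\psi\equiv0$. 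I would need to check this integration-by-parts step is legitimate at the $W^{1,p}$ regularity available, using density of smooth sections satisfying the boundary condition.

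For the quantitative estimate \eqref{main estimate}, once bijectivity of $\D:W^{1,p}\to L^p\oplus W^{1-1/p,p}(\partial M)$ is known, the open mapping theorem already gives \emph{some} constant; the issue is that $c$ must depend on $\Gamma$ only through $\norm{\Gamma}_{p^*}$. I would obtain this by an absorption argument: from the $\nabla_0$-estimate,
\begin{equation*}
\norm{\psi}_{W^{1,p}}\le c_0\left(\norm{\varphi}_{L^p}+\norm{\Gamma\cdot\psi}_{L^p}+\norm{\mathcal{B}\psi_0}_{W^{1-1/p,p}(\partial M)}\right),
\end{equation*}
and then interpolate $\norm{\Gamma\cdot\psi}_{L^p}\le\eps\norm{\psi}_{W^{1,p}}+C(\eps,\norm{\Gamma}_{p^*})\norm{\psi}_{L^p}$ using the compactness just established (Ehrling's lemma). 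Absorbing $\eps\norm{\psi}_{W^{1,p}}$ leaves a lower-order term $\norm{\psi}_{L^p}$, which is removed by a standard contradiction argument invoking the injectivity already proved: if no uniform bound held, a normalized sequence would converge weakly in $W^{1,p}$ and strongly in $L^p$ to a nonzero element of the kernel, contradiction. The dependence of all constants on $\Gamma$ is then only through $\norm{\Gamma}_{p^*}$, as claimed.

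The main obstacle, I expect, is not the Fredholm/perturbation machinery but the \emph{vanishing of the kernel} — i.e. genuine uniqueness — since a Dirac operator can perfectly well have harmonic spinors. This must be forced by the specific structure of the admissible boundary conditions $\mathcal{B}$ (chiral, MIT bag, J-type), whose defining feature is that they make the boundary term in Green's formula have a favorable sign, so that $\psi$ harmonic with $\mathcal{B}\psi=0$ is over-determined. Making this rigorous at low regularity $W^{1,p}$ with a possibly non-smooth $\Gamma$ — in particular justifying the integration by parts and the Weitzenb\"ock identity — is the delicate part, and is presumably where the sharp exponent $p^*=(3m-2)/4$ and the Dirac connection space $\mathfrak{D}^{p^*}(E)$ enter in an essential way.
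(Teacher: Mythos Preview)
Your overall architecture --- Fredholm theory for $(\D_0,\mathcal{B})$, compact perturbation by $\slashed\Gamma$, a contradiction argument to remove the lower-order term $\norm{\psi}_{L^p}$ --- matches the paper's. But the heart of the proof is the kernel vanishing, and there your proposal has a genuine gap.

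Your plan is to use the Weitzenb\"ock identity together with a ``favorable sign'' of the boundary term to get $\int_M|\nabla\psi|^2 \le (\text{curvature})\int_M|\psi|^2$, and then invoke unique continuation. The problem is twofold. First, the Bochner/Weitzenb\"ock route gives $\psi=0$ only when the curvature term $\mathcal{R}$ is small (the paper states exactly this as a separate result, requiring $\norm{\mathcal{R}}_{L^{m/2}}<\varepsilon$); for general $\Gamma\in\mathfrak{D}^{p^*}$ it fails. Second, and more importantly, you are missing the key lemma that makes unique continuation applicable at all: the identity
\[
\Bigl|\int_{\partial M}\bigl(\norm{\psi}^2 - 2\norm{\mathcal{B}\psi}^2\bigr)\Bigr| \le 2\norm{\psi}_{L^2}\norm{\D\psi}_{L^2},
\]
which shows that for a \emph{harmonic} spinor the condition $\mathcal{B}\psi|_{\partial M}=0$ upgrades to the full Dirichlet condition $\psi|_{\partial M}=0$. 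Only with $\psi|_{\partial M}=0$ can one extend $\psi$ by zero across $\partial M$ to the closed double $\tilde M$ and obtain a $W^{1,2}$ solution of $(\tilde\D_0+\tilde{\slashed\Gamma})\tilde\psi=0$ vanishing on an open set, whence WUCP forces $\tilde\psi\equiv 0$. Without this step, $\mathcal{B}\psi=0$ kills only half the boundary components and the extension is not in $W^{1,2}$.

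A related correction: the threshold $p^*\ge(3m-2)/4$ is \emph{not} chosen to make the multiplication by $\Gamma$ compact from $W^{1,p}$ to $L^p$ (compactness already holds once $2p^*>m$). It is exactly the hypothesis of Jerison's WUCP for Dirac operators with potential $V\in L^{(3m-2)/2}$, since $\slashed\Gamma\in L^{2p^*}$. In dimension $m=2$ the paper does not use WUCP at all but instead a weighted Reilly formula (a H\"ormander-type $L^2$ estimate with weight $e^f$, where $\tfrac12\Delta f=\norm{\mathcal{R}}$) to conclude that the twistor operator $P(e^f\psi)$ vanishes, hence $e^f\norm{\psi}^2$ is constant, hence zero by the Dirichlet condition above.
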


This estimate is optimal in dimension 2 in the sense that the exponents cannot be improved. It also improves the known estimates in higher dimensions. (The condition $p^* \geq (3m-2)/4$ for $m>2$  arises from the unique continuation of Jerison \cite{Jerison1986carleman} that we shall need in the proof.) For instance, in the fundamental work of Bartnik and Chru\'sciel\cite{Bartnik2005boundary}, only $L^2$-estimates were developed. While that was sufficient for their Fredholm theory of the Dirac operator, for the nonlinear setting that we shall treat later in this paper, the finer $L^p$-estimates that we obtain here are necessary.
In  \cite{Bartnik2005boundary},  when applying the Fredholm criteria to get the existence of solutions, one needs additional conditions (the mean curvature) on the boundary. Essentially, these conditions imply the triviality of the kernel of the elliptic operators. In our case, this extra condition is unnecessary.

One key observation in our proof of the above theorem is that for a harmonic spinor $\psi\in W^{1,p}(E)$, the homogeneous boundary condition
$\mathcal{B}\psi|_{\partial M}=0$ is equivalent to the zero Dirichlet condition $\psi|_{\partial M}=0$ (see \autoref{prop:chirality} and Remark \ref{rem:3.4}),
which is not the case for general spinors. Thus, the uniqueness problem for \eqref{eq:Dirac1} can be reduced to the triviality of harmonic spinor
with zero Dirichlet boundary value for Dirac operators with a non-smooth connection $\nabla_0+\Gamma$, $\Gamma\in\mathfrak{D}^{p^*}$.
To derive this uniqueness, in dimension $m=2$, inspired by the approach of H\"ormander \cite{Hoermander1965l2estimate}, we establish an $L^2$ estimate with some
suitable weight for our Dirac operators (see \autoref{thm:L2-estimate}); in dimension $m>2$, we apply the weak unique continuation property
 (WUCP) of Dirac type operators $D+V$, where $D$ is a Dirac operator with a smooth connection and $V$ is a potential
 (see \cite{Carleman1939sur} for $V$ continuous, \cite{Booss2002weak} for $V$ bounded, and  \cite{Jerison1986carleman} for $V \in L^{(3m-2)/2}$) and use
some extension argument for Dirac operators on manifolds with boundary as in \cite{Booss1993elliptic}. Finally, by using the uniqueness result of
our boundary value problem $(\D, \mathcal{B})$, we can improve the standard elliptic boundary estimate
for Dirac operators to our main $L^p$ estimate \eqref{main estimate}
(see  \autoref{thm:main-estimate} and Remark \ref{rem: norm-gamma}), which is uniform  in the sense that the constant
$c=c\left(p,\norm{\Gamma}_{p^*}\right) >0$  depends on the $\norm{\cdot}_{p^*}$ norm of $\Gamma$ but not on $\Gamma$ itself -
a property playing an important role in our later application to some elliptic-parabolic problem.

\vskip12pt

We can also  apply  the above result to derive the existence and uniqueness for boundary value problems for Dirac operators along a map. This will also be needed  for the Dirac-harmonic map heat flow introduced below.
Let $M$ be a compact Riemannian spin manifold with boundary $\partial M$, $N$ be a compact Riemannian manifold and $\Phi$  a smooth map from $M$ to $N$. Given a fixed spin structure on $M$, let $\Sigma M$ be the spin bundle of $M$.
On the twisted bundle
 $\Sigma M\otimes\Phi^{-1}TN$ ,  one can  define the {\it Dirac operator $\D$ along the map $\Phi$} \cite{Chen2006dirac},   i.e.,
\begin{equation*}
\D\Psi\coloneqq\dirac\psi^{\alpha}\otimes\theta_{\alpha}+e_i\cdot\psi^{\alpha}\otimes\nabla^{TN}_{\Phi_*(e_i)}\theta_{\alpha}.
\end{equation*}
Here $\Psi=\psi^{\alpha}\otimes\theta_{\alpha}$, $\set{\theta_{\alpha}}$ are local cross-sections of $\Phi^{-1}TN$,
$\set{e_i}$ is a local orthonormal frame of $TM$, $\dirac=e_i\cdot\nabla_{e_i}$ is the usual Dirac operator on the spin bundle over $M$ and $X\cdot$ stands for the Clifford multiplication by the vector field $X$ on $M$.   We say that $\Psi$ is a {\it harmonic spinor along the map $\Phi$ } if $\D\Psi=0$. The chiral boundary value problem for Dirac operators along a map was firstly considered in \cite{Chen2013boundary}, extending the classical chiral boundary value problem for usual Dirac operators firstly introduced in \cite{Gibbons1983positive}.

\begin{theorem}\label{thm:main-dirac}Let $M^m$ ($m\geq2$) be a compact Riemannian spin manifold with boundary $\partial M$, $N$ be a compact Riemannian manifold.
Let $\Phi\in W^{1,2p^*}(M;N)$. Then for every $1<p<p^*$, $\eta\in L^p\left(M;\Sigma M\otimes\Phi^{-1}TN\right)$ and $\mathcal{B}\psi\in W^{1-1/p,p}(\partial M;\Sigma M\otimes\Phi^{-1}TN)$, the following boundary value problem for the Dirac equation
\begin{equation*}
\begin{cases}
\D\Psi=\eta,&M;\\
\mathcal{B}\Psi=\mathcal{B}\psi,&\partial M
\end{cases}
\end{equation*}
admits a unique solution $\Psi\in W^{1,p}(M;\Sigma M\otimes\Phi^{-1}TN)$, where $\D$ is the Dirac operator along the map $\Phi$. Moreover, there exists a constant $c=c\left(p,\norm{\Phi}_{W^{1,{2p}^*}(M)}\right)>0$ such that
\begin{equation*}
\norm{\Psi}_{W^{1,p}(M)}\leq c\left(\norm{\eta}_{L^p(M)}+\norm{\mathcal{B}\psi}_{W^{1-1/p,p}(\partial M)}\right).
\end{equation*}
\end{theorem}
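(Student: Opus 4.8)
The plan is to deduce \autoref{thm:main-dirac} from \autoref{thm:dirac} by realizing $\Sigma M\otimes\Phi^{-1}TN$ as an orthogonal direct summand of a genuinely smooth Dirac bundle, in such a way that the Dirac operator $\D$ along $\Phi$ becomes the restriction of a Dirac operator $\tilde\D$ whose non-smooth connection term lies in $\mathfrak D^{p^*}$ with norm controlled by $\norm{\Phi}_{W^{1,2p^*}}$. First I would note that $2p^*>m$ in every case (for $m=2$ since $p^*>1$, for $m>2$ since $2p^*\ge(3m-2)/2>m$), so by Morrey's embedding $\Phi\in C^{0,\alpha}(M;N)$ and $\Phi^{-1}TN$ is a bona fide Hermitian vector bundle carrying a $W^{1,2p^*}\cap C^0$ structure. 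Using the Nash embedding $N\hookrightarrow\R^K$ with normal bundle $\nu N$, pull back $\R^K=TN\oplus\nu N$ to an orthogonal decomposition $M\times\R^K=\Phi^{-1}TN\oplus\Phi^{-1}\nu N$ with orthogonal projections $P$, $P^\perp=\Id-P$; these are $\Phi$ composed with smooth maps $N\to\End(\R^K)$, so $P\in W^{1,2p^*}\cap C^0$ with $\norm{\dif P}_{L^{2p^*}}\le C\norm{\dif\Phi}_{L^{2p^*}}$. Put $\tilde E\coloneqq\Sigma M\otimes(M\times\R^K)\cong(\Sigma M)^{\oplus K}$: a smooth Dirac bundle with smooth Dirac connection $\tilde\nabla_0\coloneqq\nabla^{\Sigma M}\otimes\Id$ (the spin connection, trivial on the $\R^K$-factor) and smooth boundary operator $\mathcal B$ (chiral, MIT bag, or J) acting as $\Id$ on the $\R^K$-factor. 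Since Clifford multiplication and $\mathcal B$ act on the $\Sigma M$-factor, they commute with $\Id\otimes P$ and $\Id\otimes P^\perp$ and preserve the splitting $\tilde E=(\Sigma M\otimes\Phi^{-1}TN)\oplus(\Sigma M\otimes\Phi^{-1}\nu N)$; moreover, by the Sobolev multiplication theorem and $2p^*\ge m$, the projections $\Id\otimes P$ and $\Id\otimes P^\perp$ are bounded on $L^p(\tilde E)$, $W^{1,p}(\tilde E)$ and $W^{1-1/p,p}(\tilde E\vert_{\partial M})$ with norms controlled by $\norm{\Phi}_{W^{1,2p^*}}$, so all these Sobolev norms split along the decomposition up to constants depending only on $p$ and $\norm{\Phi}_{W^{1,2p^*}}$, and $W^{1,p}(M;\Sigma M\otimes\Phi^{-1}TN)$ --- the $W^{1,p}(\tilde E)$-sections valued a.e.\ in the sub-bundle --- is a complemented closed subspace.

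Next I would introduce the block-diagonal connection
\[
\tilde\nabla\coloneqq(\Id\otimes P)\,\tilde\nabla_0\,(\Id\otimes P)+(\Id\otimes P^\perp)\,\tilde\nabla_0\,(\Id\otimes P^\perp)
\]
on $\tilde E$. Being a block-diagonal compression with respect to an orthogonal splitting, it is a metric connection, and since $P,P^\perp$ commute with Clifford multiplication it is a module derivation; hence $\tilde\nabla=\tilde\nabla_0+\Gamma$ is a Dirac connection, where $\Gamma$ --- the negative of the off-diagonal part of $\tilde\nabla_0$ --- has the form $\Gamma=\Id\otimes\bigl((2P-\Id)\dif P\bigr)\in\Omega^1(\Lg{Ad}(\tilde E))$ and commutes with Clifford multiplication. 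Two facts are the crux. First, pointwise $\abs{\Gamma}\le C\abs{\dif P}\le C'\abs{\dif\Phi}$, so $\Gamma\in L^{2p^*}$. Second, and crucially, since $\dif(\dif P)=0$ one gets $\dif\Gamma=\Id\otimes(2\,\dif P\wedge\dif P)$, which is quadratic in $\dif\Phi$ with smooth coefficients and involves \emph{no} second derivatives of $\Phi$ --- the would-be Hessian terms are symmetric and are annihilated by the exterior derivative --- so $\abs{\dif\Gamma}\le C\abs{\dif\Phi}^2\in L^{p^*}$. Thus $\Gamma\in\mathfrak D^{p^*}(\tilde E)$ with $\norm{\Gamma}_{p^*}\le C\bigl(\norm{\dif\Phi}_{L^{2p^*}}+\norm{\dif\Phi}_{L^{2p^*}}^2\bigr)\le C(\norm{\Phi}_{W^{1,2p^*}})$; this is exactly why the borderline regularity $\Phi\in W^{1,2p^*}$ suffices, and it matches the hypothesis of \autoref{thm:dirac} precisely because $\mathfrak D^{p^*}$ is defined through $\dif\Gamma$ rather than $\nabla\Gamma$. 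Finally, unwinding the definitions, on the summand $\Sigma M\otimes\Phi^{-1}TN$ one has $\tilde\nabla=\nabla^{\Sigma M}\otimes\Id+\Id\otimes\Phi^*\nabla^{TN}$, so $\tilde\D\coloneqq e_i\cdot\tilde\nabla_{e_i}$ restricts there to the Dirac operator $\D$ along $\Phi$, and restricts on the complementary summand to a Dirac operator of the same type.

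Now the theorem follows. Apply \autoref{thm:dirac} to the smooth Dirac bundle $\tilde E$ with the connection term $\Gamma\in\mathfrak D^{p^*}(\tilde E)$ and boundary condition $\mathcal B$: for $1<p<p^*$, given $\eta\in L^p(M;\Sigma M\otimes\Phi^{-1}TN)\subset L^p(\tilde E)$ and $\mathcal B\psi\in W^{1-1/p,p}(\partial M;\Sigma M\otimes\Phi^{-1}TN)$, there is a unique $\tilde\Psi\in W^{1,p}(\tilde E)$ with $\tilde\D\tilde\Psi=\eta$, $\mathcal B\tilde\Psi=\mathcal B\psi$ and $\norm{\tilde\Psi}_{W^{1,p}}\le c(p,\norm{\Gamma}_{p^*})\bigl(\norm{\eta}_{L^p}+\norm{\mathcal B\psi}_{W^{1-1/p,p}}\bigr)$. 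Writing $\tilde\Psi=\Psi_1+\Psi_2$ along the splitting and using that $\tilde\D$ and $\mathcal B$ respect it while the data lie in the first summand, we get $\tilde\D\Psi_2=0$ and $\mathcal B\Psi_2=0$; since $\Psi_2=(\Id\otimes P^\perp)\tilde\Psi\in W^{1,p}(\tilde E)$, the uniqueness part of \autoref{thm:dirac} forces $\Psi_2=0$. Hence $\Psi\coloneqq\tilde\Psi=\Psi_1\in W^{1,p}(M;\Sigma M\otimes\Phi^{-1}TN)$ solves $\D\Psi=\eta$, $\mathcal B\Psi=\mathcal B\psi$ with the stated estimate, and uniqueness for $(\D,\mathcal B)$ follows in the same way from uniqueness for $(\tilde\D,\mathcal B)$. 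Taking $c(p,\cdot)$ nondecreasing and substituting $\norm{\Gamma}_{p^*}\le C(\norm{\Phi}_{W^{1,2p^*}})$ yields the asserted constant $c=c(p,\norm{\Phi}_{W^{1,2p^*}(M)})$.

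There is no deep obstacle here: \autoref{thm:main-dirac} is essentially a corollary of \autoref{thm:dirac}. The one genuinely essential observation is the second fact above --- that $\norm{\Gamma}_{p^*}$ is finite and bounded purely in terms of $\norm{\Phi}_{W^{1,2p^*}}$ because $\dif\Gamma$ is quadratic in $\dif\Phi$ and free of Hessian terms --- which is precisely what makes the borderline Sobolev regularity of $\Phi$ sufficient. The only point requiring care is that $\Phi^{-1}TN$ is merely a $W^{1,2p^*}$ sub-bundle, so one must verify (via Morrey, $2p^*>m$, and Sobolev multiplication, $2p^*\ge m$) that the orthogonal projections act boundedly on the Sobolev spaces involved, legitimizing the passage between $\tilde E$ and its summand.
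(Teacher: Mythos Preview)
Your proof is correct and is essentially the paper's own argument, carried out in one step rather than two. The paper first proves an intermediate result (\autoref{thm:dirac-omega}) for systems of the form $\D\psi^A+\Omega^A_B\cdot\psi^B=\eta^A$ by building the Dirac bundle $\tilde E=(\Sigma M)^{\oplus q}$ with connection term $\Gamma'=\Omega$, and then applies it with $\Omega=[\nu(\Phi),\dif\nu(\Phi)]$; since $P\dif P+\dif P\,P=\dif P$ one checks that your $\Gamma=(2P-\Id)\dif P=[P,\dif P]=[\nu,\dif\nu]=\Omega$, so your block-diagonal connection is literally the same connection, and your key observation $\dif\Gamma=2\,\dif P\wedge\dif P$ is exactly the paper's $\dif\Omega=[\dif\nu,\dif\nu]$. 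Your elimination of the normal component $\Psi_2$ via the homogeneous equation and uniqueness likewise matches the paper's claim that $\tilde\Psi^A=\nu^A_B\Psi^B$ satisfies $\dirac\tilde\Psi^A+\Omega^A_B\cdot\tilde\Psi^B=0$ with $\mathcal B\tilde\Psi=0$ and hence vanishes.
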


\vskip12pt

We shall then apply these estimates and existence results to a new elliptic-parabolic problem in geometry that involves Dirac equations. The novelty of this problem consists in the combination of a second order semilinear parabolic equation  with a first order elliptic side condition of Dirac type. We see this as a model problem for a heat flow approach to various other first order elliptic problems in geometric analysis. In any case, this is a nonlinear system coupling a Dirac equation with another prototype of a geometric variational problem, that of harmonic maps. The problem is also motivated by the supersymmetric nonlinear $\sigma$-model of QFT, see e.g. \cite{Deligne1999quantum,Jost2009geometry} where the fermionic part is a Dirac spinor. In fact, this is one of the most prominent roles that Dirac equations play in contemporary theoretical physics, as this leads to the action functional of super string theory.

In order to set up that problem, we first  have to recall the notion of {\it Dirac-harmonic maps}.
Consider the following functional
\begin{equation*}
  L(\Phi,\Psi)=\dfrac12\int_M\left(\norm{\dif\Phi}^2+\rin{\Psi}{\D\Psi}\right),
\end{equation*}
where $\rin{}{}=\RE\hin{}{}$ is the real part of the induced Hermitian inner product $\hin{}{}$ on $\Sigma M\otimes\Phi^{-1}TN$.

A Dirac-harmonic map (see \cite{Chen2005regularity,Chen2006dirac}) then is defined to be a critical point $(\Phi,\Psi) $ of $L$. The Euler-Lagrange equations are
\begin{eqnarray}
\tau(\Phi)&=&\dfrac12\rin{\psi^{\alpha}}{e_i\cdot\psi^{\beta}}R^N(\theta_{\alpha},\theta_{\beta})\Phi_*(e_i)\eqqcolon\mathcal{R}(\Phi,\Psi),\label{eq:EL1}\\
\D\Psi&=&0,\label{eq:EL2}
\end{eqnarray}
where $R^N(X,Y)\coloneqq[\nabla^N_X,\nabla^N_Y]-\nabla^N_{[X,Y]}, \forall X,Y\in\Gamma(TN)$ stands for the curvature operator of $N$ and  $\tau(\Phi)\coloneqq(\nabla_{e_i}\dif\Phi)(e_i)$ is the tension field of $\Phi$.

The general regularity and existence problems for Dirac-harmonic maps have been considered in \cite{Chen2005regularity,Chen2006dirac,Wang2009regularity,Zhu2009regularity,Chen2013boundary,Chen2013maximum,Sharp2013regularity}.   The existence of uncoupled Dirac-harmonic maps (in the sense that the map part is harmonic) via the index theory method was obtained in \cite{Ammann2013dirac}.
For the construction of  examples of  coupled Dirac-harmonic maps (in the sense that the map part is not harmonic), we refer to \cite{Jost2009explicit,Ammann2011examples}.

\vskip12pt

Here, we want to propose and develop   an alternative approach to the existence of Dirac-harmonic maps.
This will be the parabolic or heat flow approach. \eqref{eq:EL1} is a second-order elliptic system, and so,
 we can turn it into a parabolic one by letting the solution depend on time $t$ and putting a time derivative on the left hand side.
In contrast, \eqref{eq:EL2} is first order, and so, we cannot convert it into a parabolic equation, but need to carry it as a constraint along the flow.
 Thus, we  introduce the following flow for Dirac-harmonic maps:  For $\Phi\in C^{2,1,\alpha}(M\times(0,T];N)$ and
$\Psi\in C^{1,0,\alpha}(M\times[0,T];\Sigma M\otimes\Phi^{-1}TN)$
\begin{equation}\label{eq:dhf}
\begin{cases}
\partial_t\Phi=\tau(\Phi)-\mathcal{R}(\Phi,\Psi),&M\times(0,T];\\
\D\Psi=0,&M\times[0,T],
\end{cases}
\end{equation}
with the boundary-initial data
\begin{equation}\label{eq:bdhf}
\begin{cases}
\Phi=\phi,&M\times\set{0}\cup\partial M\times[0,T];\\
\mathcal{B}\Psi=\mathcal{B}\psi,&\partial M\times[0,T],
\end{cases}
\end{equation}
where $\phi\in C^{2,1,\alpha}(M\times\set{0}\cup\partial M\times[0,T];N)$ and $\psi\in C^{1,0,\alpha}(\partial M\times[0,T];\Sigma M\times\phi^{-1}TN)$,
and $f\in C^{k,l,\alpha}$ means that $f(x,\cdot)\in C^{l+\alpha/2}$ and $f(\cdot,t)\in C^{k+\alpha}$.
We call this system \eqref{eq:dhf} {\it the heat flow for Dirac-harmonic map.}

We consider this problem as a model for a parabolic approach to other problems in geometric analysis that involve first order  side conditions. Also, we shall use this problem to demonstrate the power of our estimates for Dirac equations.

\vskip12pt

We shall apply our elliptic estimates for Dirac equations with boundary conditions to obtain the local existence and uniqueness of
the heat flow for Dirac-harmonic map. The long time existence will be considered elsewhere, as it involves problems of a different nature. For the classical theory of harmonic map heat flow, we refer to e.g. \cite{Eells1964harmonic,Hamilton1975harmonic, Li1991heat, Chang1989heat,Struwe1985on,Struwe1988on, Lin2008analysis} etc.

\begin{theorem}\label{thm:main}Let $M^m$ ($m\geq2$) be a compact Riemannian spin manifold with boundary $\partial M$, $N$ be a compact Riemannian manifold.
Suppose that
\begin{gather*}
\phi\in\cap_{T>0}C^{2,1,\alpha}(\bar M\times[0,T];N),
\intertext{and}
\mathcal{B}\psi\in \cap_{T>0}C^{1,0,\alpha}(\partial M\times[0,T];\Sigma M\otimes\phi^{-1}TN)
\end{gather*}
 for some $0<\alpha<1$, then the problem consisting of \eqref{eq:dhf} and \eqref{eq:bdhf} admits a unique solution
 \begin{gather*}
 \Phi\in\cap_{0<t<\tau<T_1} C^{2,1,\alpha}(\bar M\times[t,\tau])\cap C^0(\bar M\times[0,T_1];N),
 \intertext{and}
  \Psi\in\cap_{0<t<\tau<T_1}C^{1,0,\alpha}(\bar M\times[t,\tau])\cap C^{2,0,\alpha}(M\times(0,T_1))\cap C^{1,0,0}(\bar M\times[0,T_1];\Sigma M\otimes\Phi^{-1}TN)
 \end{gather*}
 for some time $T_1>0$. The maximum time $T_1$ is characterized by the condition
\begin{equation*}
\limsup_{t<T_1,t\to T_1}\norm{\dif\Phi(\cdot,t)}_{C^0(\bar M)}=\infty.
\end{equation*}
\end{theorem}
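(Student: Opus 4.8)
The plan is to decouple the system \eqref{eq:dhf}--\eqref{eq:bdhf} by using the Dirac constraint to express $\Psi$ as a function of $\Phi$, thereby reducing to a quasilinear parabolic system for the map $\Phi$ alone, and then to solve that system on a short time interval by a contraction mapping argument. Fix an isometric embedding $N\hookrightarrow\R^{K}$ (Nash) and regard $\Phi$ as an $\R^{K}$-valued map with values in a tubular neighbourhood of $N$; in these coordinates $\tau(\Phi)=\Delta\Phi+A(\Phi)(\dif\Phi,\dif\Phi)$, with $A$ the second fundamental form of $N$, and $\mathcal R(\Phi,\Psi)$ becomes an expression that is quadratic in $\Psi$, linear in $\dif\Phi$, with smooth coefficients depending on $\Phi$. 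For a fixed map $\Phi$ of sufficient regularity, \autoref{thm:main-dirac} applied to $\D=\D^{\Phi}$ produces a unique $\Psi=\Psi(\Phi)\in W^{1,p}(M;\Sigma M\otimes\Phi^{-1}TN)$ solving $\D^{\Phi}\Psi=0$, $\mathcal B\Psi=\mathcal B\psi$, together with the estimate $\norm{\Psi(\Phi)}_{W^{1,p}(M)}\leq c\bigl(p,\norm{\Phi}_{W^{1,2p^{*}}(M)}\bigr)\norm{\mathcal B\psi}_{W^{1-1/p,p}(\partial M)}$. The structural point that makes the whole scheme viable is that this constant depends only on the $W^{1,2p^{*}}$-norm of $\Phi$, and not on $\Phi$ itself, so along a bounded family of maps $\Phi$ the spinors $\Psi(\Phi)$ stay uniformly bounded, and, applying the same estimate to differences, the assignment $\Phi\mapsto\Psi(\Phi)$ is Lipschitz between the appropriate norms.

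\textbf{Fixed point.} Next I would run an iteration. Let $\Phi_{0}$ be the parabolic extension of the boundary--initial datum $\phi$, and inductively set $\Psi_{k}:=\Psi(\Phi_{k})$ via \autoref{thm:main-dirac} and let $\Phi_{k+1}$ solve the linear parabolic boundary--initial value problem $\partial_{t}\Phi_{k+1}-\Delta\Phi_{k+1}=A(\Phi_{k})(\dif\Phi_{k},\dif\Phi_{k})-\mathcal R(\Phi_{k},\Psi_{k})$ with the data \eqref{eq:bdhf}. Choosing $p$ sufficiently close to $p^{*}$ that $\Psi\in W^{1,p}$ embeds into an $L^{q_{0}}$ for which the quadratic term $\mathcal R(\Phi_{k},\Psi_{k})=O(\abs{\Psi_{k}}^{2}\abs{\dif\Phi_{k}})$ has enough integrability to feed into the linear parabolic theory ($L^{q}$- and Schauder estimates), one obtains a closed ball in a space such as $X_{T_{1}}=C^{0}\bigl([0,T_{1}];C^{1,\alpha}(\bar M)\bigr)$, or a parabolic Sobolev space $W^{2,1}_{q}$, that is preserved by the map $\Phi_{k}\mapsto\Phi_{k+1}$ once $T_{1}$ is small; the contraction factor is forced below $1$ by shrinking $T_{1}$, using the Lipschitz dependence of $\Psi(\cdot)$ above together with the local Lipschitz dependence of $A$ and $\mathcal R$ on their arguments. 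The $\Phi$-dependence of the twisted bundle $\Sigma M\otimes\Phi^{-1}TN$, which a priori obstructs comparing $\Psi(\Phi)$ with $\Psi(\Phi')$, is handled by pulling everything back to the fixed bundle $\Sigma M\otimes\R^{K}$ through the embedding, or equivalently by parallel transport in $N$ along short geodesics whose error terms are controlled by $\norm{\Phi-\Phi'}_{C^{0}}$.

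\textbf{Regularity, uniqueness, blow-up.} The fixed point gives a solution of low regularity on $[0,T_{1}]$; to reach the regularity asserted in the theorem I would bootstrap on any subinterval $[t,\tau]\subset(0,T_{1})$, away from the corner $\partial M\times\set{0}$: improved regularity of $\Phi$ gives $\dif\Phi$ in a better space by parabolic embedding, which via the uniform estimate of \autoref{thm:main-dirac} improves $\Psi$, which improves the right-hand side $A(\Phi)(\dif\Phi,\dif\Phi)-\mathcal R(\Phi,\Psi)$, and alternating parabolic Schauder estimates with the elliptic estimates for $\D^{\Phi}$ yields $\Phi\in C^{2,1,\alpha}$ and $\Psi\in C^{1,0,\alpha}$ up to $\partial M$, with interior $C^{2,0,\alpha}$ for $\Psi$ from elliptic regularity of the first order operator $\D^{\Phi}$; continuity up to $t=0$ comes from continuity of the linear parabolic solution operator on continuous data, the corner compatibility generally preventing more. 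Uniqueness follows by a Gronwall argument: for two solutions with the same data, subtracting the parabolic equations and invoking $\norm{\Psi-\Psi'}_{W^{1,p}}\leq c\,\norm{\Phi-\Phi'}_{W^{1,2p^{*}}}$ from \autoref{thm:main-dirac} yields a differential inequality forcing $\Phi\equiv\Phi'$ on a short interval, hence everywhere by continuation. Finally, if $\limsup_{t\to T_{1}}\norm{\dif\Phi(\cdot,t)}_{C^{0}(\bar M)}<\infty$, then the Dirac estimate bounds $\Psi$, hence $\mathcal R(\Phi,\Psi)$, uniformly on $[0,T_{1})$, so parabolic estimates bound $\Phi$ in $C^{2,1,\alpha}$ up to $t=T_{1}$ and the flow restarts there, contradicting maximality; this proves the stated characterization of $T_{1}$.

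\textbf{Main obstacle.} The crux is the nonlinear, non-local coupling through $\Psi=\Psi(\Phi)$: unlike the pure harmonic map heat flow, the ``lower order'' inhomogeneity $\mathcal R(\Phi,\Psi)$ is quadratic in a quantity that itself solves a $\Phi$-dependent elliptic system over a $\Phi$-dependent bundle, so both the a priori bound that closes the fixed point and the contraction estimate hinge on having estimates for the Dirac equation whose constants are \emph{uniform} in $\Phi$ and which control $\Psi$ in a norm ($W^{1,p}$ with $p$ above the threshold of \autoref{thm:dirac}, not merely $L^{2}$) strong enough to re-enter the parabolic theory. This is precisely why the $L^{p}$-theory of \autoref{thm:dirac}--\autoref{thm:main-dirac}, and not the classical $L^{2}$-theory, is indispensable here, and it is the step I expect to require the most care.
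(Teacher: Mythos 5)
Your proposal takes essentially the same route as the paper: embed $N\hookrightarrow\R^{q}$, use \autoref{thm:dirac-omega}/\autoref{thm:main-dirac} to assign to each map $u$ the unique boundary-value spinor $\Psi(u)$ with a constant that depends only on $\norm{u}$ (this is the indispensable uniform dependence you correctly identify), run a fixed-point/Picard iteration in a $C^{0}_{t}C^{1}_{x}$-type space using the Dirichlet heat kernel representation to get short-time existence, bootstrap via parabolic and Dirac Schauder estimates for the stated regularity, use Gronwall plus the Lipschitz dependence of $\Psi(\cdot)$ for uniqueness, and argue the blow-up criterion by restarting the flow if $\norm{\dif\Phi}_{C^{0}}$ stays bounded. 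The only detail you gloss over that the paper treats explicitly is the verification that the fixed point of the extended $\R^{q}$-valued system actually has image in $N$ and that $\Psi$ is genuinely a section of $\Sigma M\otimes\Phi^{-1}TN$; the paper handles this by a maximum-principle argument on $\norm{\rho(\Phi)}^{2}$ together with the uniqueness in \autoref{thm:dirac-omega} applied to $\tilde\Psi^{A}=\nu^{A}_{B}\Psi^{B}$, a step worth making explicit since the fixed point a priori lives only in a tubular neighbourhood.
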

\begin{rem}
All our results \autoref{thm:dirac}, \autoref{thm:main-dirac} and \autoref{thm:main} hold for
the chiral boundary operators  $\mathcal{B}^{\pm}\coloneqq\tfrac12\left(\Id\pm\vect{n}\cdot G\right),$   the MIT bag boundary operators
$\mathcal{B}^{\pm}_{MIT}\coloneqq\tfrac12\left(1\pm\sqrt{-1}\vect{n}\right)$, and the $J$-boundary operators
 $\mathcal{B}_J^{\pm}\coloneqq\tfrac12\left(\Id\pm\vect{n}\cdot J\right)$. We will only give the proofs for the case of the chiral boundary conditions.
The proofs for the other cases are similar and hence we will omit them.
\end{rem}

We would like to mention that Branding \cite{Branding2014evolution} considered regularized Dirac-harmonic maps from closed Riemannian surfaces and
studied the corresponding evolution problem.

\vskip12pt

The paper is organized as follows. In \autoref{sec:dh}, we provide the definitions of Dirac bundle etc. and Dirac-harmonic maps.
 We also derive the Euler-Lagrange equation for Dirac-harmonic maps. In \autoref{sec:dirac}, we derive some elliptic estimates and the existence and
uniqueness of solutions of Dirac equations with chiral boundary value conditions. In \autoref{sec:dirac-map}, we will prove \autoref{thm:main-dirac}.
In \autoref{sec:dhf} we give a proof of the short time existence for the flow of Dirac-harmonic maps, \autoref{thm:main}. Finally, in \autoref{sec:dirac-disk}
we discuss a special case of the Dirac equation along a map between Riemannian disks. In this special case, the solution can be given through Cauchy integrals.
\par
\vspace{1ex}
\textbf{Notations}:
\vspace{1ex}
\par
 The lower case letter $c$ will designate a generic constant possibly depending on $M,N$ and other parameters, but independent of a particular solution of \eqref{eq:DHF} and \eqref{eq:BDHF}, while the capital letter $C$ will designate a constant possibly depending on the solutions.
 \par
  We list some notations in the following:
 \begin{list}{}{}
 \item $\Sigma M$ \quad the spin bundle on $M$.
\item $\hom{E}$\quad the homomorphism bundle of $E$.
\item $C^k(M;N)$\quad the space of all $C^k$-maps from $M$ to $N$.
\item $C^k(E)=C^k(M;E)$\quad the space of all $C^k$-cross sections of $E$ where $E$ is a vector bundle on $M$.
\item $C^{k}(\partial M;E)$\quad the space of all $C^k$-cross sections of $E$ restricted to the boundary $\partial M$.
\item $C^{k,l,\alpha}(M\times I;E)$\quad the space of all cross sections $\psi(\cdot,t)$ of $E$ such that $\psi\in C^{k,l,\alpha}(M\times I)$.
\item $W^{s,p}(E)=W^{s,p}(M;E)$.
\item $\End{E}$\quad the endomorphism bundle of $E$.
\item $\Omega^p(E)=\Gamma\left(\Lambda^pT^*M\otimes E\right)$\quad the space of all $E$-valued $p$-forms on $M$.
\item $\Omega^p(\La{so}_n)=\Omega^p(M)\otimes\La{so}_n$.
\item $\Lg{Ad}(E)$\quad a sub-bundle of $\End(E)$ such that for all $A\in\Lg{Ad}(E)$, we have that $A=-A^*$.
\item $A(D)$\quad the space of all holomorphic functions on $D$.
\item $\norm{\cdot}$\quad the inner norm, i.e., $\norm{\psi}^2=\hin{\psi}{\psi}$. We also use the same notation for some special norms in the sequel, as will be specified in the appropriate places.
 \end{list}
 \par
\vspace{3ex}
\par
\par

\vskip0.2cm

\section{Preliminaries}\label{sec:dh}

\subsection{Dirac bundles}
\begin{defn}[See \cite{Lawson1989spin}.]Let $E$ be a Hermitian bundle on a Riemannian manifold $M^m$ of left Clifford modules over $\Cl(M)$. Denote the Clifford multiplication, the metric and the connection by $\cdot,\hin{}{},\nabla$ respectively. We say that $E$ is a Dirac bundle, if the following properties hold:
\begin{enumerate}[D1]
\item The Clifford multiplication is parallel, i.e., the covariant derivative on $E$ is a module derivation, i.e.,
\begin{equation*}
\nabla_X\left(Y\cdot\psi\right)=\nabla_XY\cdot\psi+Y\cdot\nabla_X\psi,\quad\forall X,Y\in\Gamma(TM),\ \psi\in\Gamma(E).
\end{equation*}
\item The Clifford multiplication by unit vectors in $TM$ is orthogonal, i.e.,
\begin{equation*}
\hin{X\cdot\psi}{\varphi}=-\hin{\psi}{X\cdot\varphi},\quad\forall X\in TM,\psi,\varphi\in E.
\end{equation*}
\item The connection is a metric connection, i.e.,
\begin{equation*}
X\hin{\psi}{\varphi}=\hin{\nabla_X\psi}{\varphi}+\hin{\psi}{\nabla_X\varphi},\quad\forall X\in TM,\psi,\varphi\in\Gamma(E).
\end{equation*}
\end{enumerate}
We call such a  connection a {\it Dirac connection}.
\end{defn}
Then one can define the Dirac operator associated to a Dirac bundle by
\begin{equation*}
\D\coloneqq\gamma^E\circ\nabla,
\end{equation*}
where $\gamma^E$ stands for the Clifford multiplication on $E$. In local coordinates, $\D$ is given by
\begin{equation*}
\D=\gamma^E(e_i)\nabla_{e_i}=e^i\cdot\nabla_{e_i}
\end{equation*}
where $\set{e_i}$ is a local orthogonal frame of $TM$. One can check that $\D$ is self-adjoint, i.e., we have the following Green formula
\begin{equation*}
\int_M\hin{\D\psi}{\varphi}=\int_{M}\hin{\psi}{\D\varphi}+\int_{\partial M}\hin{\vect{n}\cdot\psi}{\varphi},\quad\forall\psi,\varphi\in\Gamma(E).
\end{equation*}
\par
 Suppose  $E$ is a Dirac bundle on $M$ and $F=E\vert_{\partial M}$ is the restriction of $E$ to the boundary $\partial M$. Then $F$ is a Dirac bundle in a natural way as follows.
\begin{enumerate}[F1]
\item The metric on $F$ is just the restriction of $E$ on $\partial M$.
\item The Clifford multiplication of $F$, denoted by $\gamma$, is defined as
\begin{equation*}
\gamma(X)\psi\coloneqq\vect{n}\cdot X\cdot\psi,\quad\forall X\in T\partial M,\ \psi\in F.
\end{equation*}
\item The connection $\bar\nabla$ of $F$ is defined as
\begin{equation*}
\bar\nabla_X\psi\coloneqq
\nabla_X\psi+\dfrac{1}{2} \gamma(A(X))\psi,\quad\forall X\in\Gamma(T\partial M),\ \psi\in\Gamma(F),
\end{equation*}
where $A$ is the shape operator of $\partial M$ with respect to the unit outward normal field $\vect{n}$ along $\partial M$.
\end{enumerate}
\begin{lem}This construction gives a Dirac bundle $F$ on $\partial M$.
\end{lem}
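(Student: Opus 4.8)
The approach is to verify the three defining axioms D1--D3 of a Dirac bundle directly for the triple $(F,\gamma,\bar\nabla)$ equipped with the restricted Hermitian metric, using only: (i) the corresponding properties D1--D3 of the ambient Dirac bundle $E$; (ii) the elementary Clifford identities $\vect{n}\cdot\vect{n}=-1$ and $\vect{n}\cdot X=-X\cdot\vect{n}$ for $X\in T\partial M$ (the latter holding because $X\perp\vect{n}$); and (iii) the Gauss and Weingarten formulas $\nabla_XY=\nabla^{\partial M}_X Y+\hin{A(X)}{Y}\vect{n}$ and $\nabla_X\vect{n}=-A(X)$ for $X,Y\in\Gamma(T\partial M)$, where $\nabla^{\partial M}$ is the Levi-Civita connection of $\partial M$. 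As a first observation, $\gamma$ does define a $\Cl(\partial M)$-module structure: for $X,Y\in T\partial M$ one has $\gamma(X)\gamma(Y)\psi=\vect{n}\cdot X\cdot\vect{n}\cdot Y\cdot\psi=-X\cdot\vect{n}\cdot\vect{n}\cdot Y\cdot\psi=X\cdot Y\cdot\psi$, so $\gamma(X)\gamma(Y)+\gamma(Y)\gamma(X)=X\cdot Y+Y\cdot X=-2\hin{X}{Y}$, as required.

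The axioms D2 and D3 for $F$ are then immediate. For D2 one applies the orthogonality of Clifford multiplication on $E$ twice: $\hin{\gamma(X)\psi}{\varphi}=\hin{\vect{n}\cdot X\cdot\psi}{\varphi}=-\hin{X\cdot\psi}{\vect{n}\cdot\varphi}=\hin{\psi}{X\cdot\vect{n}\cdot\varphi}=-\hin{\psi}{\gamma(X)\varphi}$. For D3, since $X\in T\partial M\subset TM$ we get $X\hin{\psi}{\varphi}=\hin{\nabla_X\psi}{\varphi}+\hin{\psi}{\nabla_X\varphi}$ from D3 on $E$, and the two extra terms produced by the correction $\tfrac12\gamma(A(X))$ in $\bar\nabla$ cancel against each other because $\hin{\gamma(A(X))\psi}{\varphi}+\hin{\psi}{\gamma(A(X))\varphi}=0$ by the D2 just established (note that $A(X)\in T\partial M$, so $\gamma(A(X))$ makes sense).

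The only step with real content is D1, and this is precisely where the $\tfrac12\gamma(A(X))$ correction term in the definition of $\bar\nabla$ is genuinely needed. Writing $\bar\nabla_X\psi=\nabla_X\psi+\tfrac12\vect{n}\cdot A(X)\cdot\psi$ and expanding $\bar\nabla_X(\gamma(Y)\psi)=\bar\nabla_X(\vect{n}\cdot Y\cdot\psi)$ via D1 on $E$ and the Gauss--Weingarten formulas, the contribution $(\nabla_X\vect{n})\cdot Y\cdot\psi=-A(X)\cdot Y\cdot\psi$ together with the normal component $\hin{A(X)}{Y}\vect{n}$ of $\nabla_XY$ (which becomes $-\hin{A(X)}{Y}\psi$ after Clifford multiplication by $\vect{n}$) are the terms that must be absorbed; on the other side, $\gamma(Y)\bar\nabla_X\psi$ yields $\tfrac12 Y\cdot A(X)\cdot\psi$ after using $\vect{n}\cdot Y\cdot\vect{n}=Y$, while the correction term inside $\bar\nabla_X$ itself contributes $\tfrac12 A(X)\cdot Y\cdot\psi$. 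Collecting everything, the difference $\bar\nabla_X(\gamma(Y)\psi)-\gamma(\nabla^{\partial M}_X Y)\psi-\gamma(Y)\bar\nabla_X\psi$ reduces to $-\tfrac12\bigl(A(X)\cdot Y+Y\cdot A(X)\bigr)\psi-\hin{A(X)}{Y}\psi$, which vanishes by the Clifford relation $A(X)\cdot Y+Y\cdot A(X)=-2\hin{A(X)}{Y}$. Hence D1 holds and $F$ is a Dirac bundle on $\partial M$. I expect this last bookkeeping of the second-fundamental-form terms to be the only place where one must take care; everything else is a formality.
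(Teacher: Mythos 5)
Your proof is correct and follows essentially the same route as the paper: the substantive step in both is the verification of D1 via the Gauss--Weingarten formulas, with the $\tfrac12\gamma(A(X))$ correction absorbing the $-A(X)\cdot Y\cdot\psi$ and $-\hin{A(X)}{Y}\psi$ terms through the Clifford relation $A(X)\cdot Y+Y\cdot A(X)=-2\hin{A(X)}{Y}$. You spell out the D2 and D3 checks in more detail, whereas the paper simply observes that $\gamma(X)$ and $\gamma\circ A$ land in $\Gamma(\Lg{Ad}(F))$, but the content is identical.
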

\begin{proof}
\begin{enumerate}
\item It is obvious that $\gamma\circ A\in \Omega^1(\Lg{Ad}(F))$. As a consequence, $\bar\nabla$ is a metric connection on $F$. Moreover, $\gamma(X)\in\Gamma(\Lg{Ad}(F))$.
\item Let $B$ be the second fundamental form of $\partial M$ in $M$. For every $X,Y\in\Gamma(T\partial M)$ with $\bar\nabla_XY=0$ at the considered point,
\begin{equation*}
\begin{split}
\bar\nabla_X(\gamma(Y)\psi)=&\nabla_X(\vect{n}\cdot Y\cdot\psi)+\dfrac12A(X)\cdot Y\cdot\psi\\
=&-A(X)\cdot Y\cdot\psi+\vect{n}\cdot B(X,Y)\cdot\psi+\vect{n}\cdot Y\cdot\nabla_X\psi+\dfrac12 A(X)\cdot Y\cdot\psi\\
=&-A(X)\cdot Y\cdot\psi-\hin{A(X)}{Y}\psi+\vect{n}\cdot Y\cdot\nabla_X\psi+\dfrac12 A(X)\cdot Y\cdot\psi\\
=&-A(X)\cdot Y\cdot\psi+\dfrac12 A(X)\cdot Y\cdot\psi+\dfrac12 Y\cdot A(X)\cdot\psi+\vect{n}\cdot Y\cdot\nabla_X\psi+\dfrac12A(X)\cdot Y\cdot\psi\\
=&\vect{n}\cdot Y\cdot\nabla_X\psi+\dfrac 12Y\cdot A(X)\cdot\psi=\gamma(Y)\bar\nabla_X\psi.
\end{split}
\end{equation*}
This identity means that $\gamma$ is parallel.
\end{enumerate}
Therefore, $F$ is a Dirac bundle on $\partial M$.
\end{proof}
\par
The Dirac operator $\bar\D$ of $F$, defined by
\begin{equation*}
\bar\D\coloneqq
\gamma(e_i)\bar\nabla_{e_i},
\end{equation*}
where $\set{e_i}$ is a local orthogonal frame of $T\partial M$, according to the definition, satisfies the following relationship
\begin{equation*}
\bar\D=\vect{n}\cdot\D+\nabla_{\vect{n}}-\dfrac{m-1}{2}h \end{equation*}
where $h$ is the mean curvature of $\partial M$ with respect to $\vect{n}$. If $M$ is a surface,  $-h$ is just the geodesic curvature of $\partial M$ (as a curve) in $M$.
\par
\subsection{Chiral and MIT bag boundary value conditions }In this subsection, we introduce the chiral and MIT bag boundary conditions
(c.f.  \cite{Bar2012boundary, Hijazi2002eigenvalue}). We say that $G$ is a chiral operator if $G\in\Gamma(\End(E))$ satisfies
\begin{equation*}
G^2=\Id,\quad G^*=G,\quad \nabla G=0,\quad GX\cdot=-X\cdot G
\end{equation*}
for every $X\in TM$. It is easy to check that
\begin{equation*}
\gamma(X)G=G\gamma(X),\quad \bar\nabla G=0,\quad \bar\D G=G\bar\D,\quad\bar\D\vect{n}\cdot=-\vect{n}\cdot\bar\D
\end{equation*}
hold on the boundary $\partial M$ for all $X\in T\partial M$. The {\it chiral boundary operator} $\mathcal{B}^{\pm}$ is defined by
\begin{equation*}
\mathcal{B}^{\pm}\coloneqq\dfrac12\left(\Id\pm\vect{n}\cdot G\right).
\end{equation*}
It is obvious that $\left(\mathcal{B}^{\pm}\right)^{*}=\mathcal{B}^{\mp}$ and $\bar\D\mathcal{B}^{\pm}=\mathcal{B}^{\mp}\bar\D$. The chiral boundary operator is elliptic \cite{Bar2012boundary} since
\begin{equation*}
\vect{n}\cdot X\cdot\mathcal{B}^{\pm}=\mathcal{B}^{\mp}\cdot\vect{n}\cdot X,\quad\forall X\in T\partial M.
\end{equation*}
\par
{\it The MIT bag boundary operator} is defined by
\begin{equation*}
\mathcal{B}_{MIT}^{\pm}\coloneqq\dfrac12\left(\Id\pm\sqrt{-1}\vect{n}\right).
\end{equation*}
More generally, when $J\in\Gamma(\End(E))$ satisfies
\begin{equation*}
J^2=-\Id,\quad J^*=-J,\quad\nabla J=0,\quad JX\cdot=X\cdot J
\end{equation*}
for every $X\in TM$,  we can define a boundary operator, called {\it the $J$-boundary operator}, by
\begin{equation*}
\mathcal{B}_J^{\pm}\coloneqq\dfrac12\left(\Id\pm\vect{n}\cdot J\right).
\end{equation*}
If $J=\sqrt{-1}$, then it is easy to see that the MIT bag boundary operator is just the $\sqrt{-1}$-boundary operator. Another example is  $J=\sqrt{-1}G_1G_2$ where $[G_1,G_2]=0$ with $G_1,G_2$ being chiral operators. In fact, in our setting, the $J$-operator is just a multiplication of the chiral operator $G$ by $e_1\cdot e_2\cdot$ and vise versa. One can check that $\mathcal{B}_J^{\pm}$ is elliptic since
\begin{equation*}
\mathcal{B}_J^{\pm}\cdot\vect{n}\cdot X=\vect{n}\cdot X\cdot\mathcal{B}_J^{\mp},\quad\forall X\in T\partial M.
\end{equation*}
For simplicity,  we shall denote by $\mathcal{B}$ one of $\mathcal{B}^{\pm}$, $\mathcal{B}_{MIT}^{\pm}$ and $\mathcal{B}_{J}^{\pm}$.
For the sake of convenience, in the sequel, we will mainly consider the case of chiral boundary conditions and omit the detailed discussions of the other cases of
boundary conditions.

\par
The following theorem is well known, see \cite{Bar2012boundary,Chen2013maximum,Sharp2013regularity,Schwarz1995hodge}.
\begin{theorem}[See \cite{Schwarz1995hodge}, p.55, Theorem 1.6.2]\label{thm:fredholm} The operator
\begin{equation*}
(\D,\mathcal{B}):W^{s,p}(E)\To W^{s-1,p}(E)\times W^{s-1/p,p}(E\vert_{\partial M})
\end{equation*}
is Fredholm for all $s\geq 1$ and $1<p<\infty$. Moreover the kernel and co-kernel are independent of the choice of $s$ and $p$. Therefore, we have the following elliptic a-priori estimate
\begin{equation*}
\norm{\psi}_{W^{s,p}(E)}\leq c\left(\norm{\D\psi}_{W^{s-1,p}(E)}+\norm{\mathcal{B}\psi}_{W^{s-1/p,p}(E\vert_{\partial M})}+\norm{\psi}_{L^p(E)}\right),
\end{equation*}
where $c=c(p,s,M,\partial M,\D,\mathcal{B})>0$.
\end{theorem}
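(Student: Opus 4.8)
The plan is to identify $(\D,\mathcal{B})$ as an elliptic boundary value problem satisfying the Lopatinski--Shapiro (Shapiro--Lopatinskii) condition and then to invoke the standard parametrix theory for such problems on the full $L^p$-Sobolev scale, exactly as in \cite{Schwarz1995hodge} (see also \cite{Booss1993elliptic, Bar2012boundary}); the only point requiring an argument specific to the present situation is the verification of boundary ellipticity, and this is essentially the algebra already recorded above.

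First I would note that $\D$ is a first-order elliptic differential operator: its principal symbol at $\xi\in T^*M$ is $\sqrt{-1}\,\xi\cdot$, and since $(\xi\cdot)^2=-\abs{\xi}^2\,\Id$ this is invertible whenever $\xi\neq0$. Next I would check boundary ellipticity by freezing coefficients at a point $x_0\in\partial M$ and reducing, via the collar $\partial M\times[0,\eps)$ and the relation $\bar\D=\vect{n}\cdot\D+\nabla_{\vect{n}}-\tfrac{m-1}{2}h$, to the model problem on the half-line: for $\xi\in T^*_{x_0}\partial M\setminus\set{0}$ the space of $L^2$-solutions on $[0,\infty)$ of the model ODE attached to $\sigma(\D)$ is a subspace $M^+_\xi\subset E_{x_0}$ with $\rank M^+_\xi=\tfrac12\rank E$, and one must show that $\mathcal{B}^{\pm}$ restricts to an isomorphism $M^{\pm}_\xi\To\operatorname{ran}\mathcal{B}^{\pm}$. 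Since $\mathcal{B}^{\pm}$ is an orthogonal projection of rank $\tfrac12\rank E$, this reduces to injectivity, which follows from the intertwining relations $\vect{n}\cdot X\cdot\mathcal{B}^{\pm}=\mathcal{B}^{\mp}\,\vect{n}\cdot X$ ($X\in T\partial M$) and $\vect{n}\cdot\mathcal{B}^{\pm}=\mathcal{B}^{\mp}\,\vect{n}\cdot$ established above; this is precisely the computation showing chiral conditions are elliptic in \cite{Bar2012boundary}. The identities $\mathcal{B}_J^{\pm}\,\vect{n}\cdot X=\vect{n}\cdot X\cdot\mathcal{B}_J^{\mp}$ do the same job for the MIT bag and $J$-operators.

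Granting boundary ellipticity, the general theory of elliptic boundary value problems — a two-sided parametrix in the Boutet de Monvel calculus, equivalently the Agmon--Douglis--Nirenberg $L^p$ theory for Douglis--Nirenberg systems with Lopatinski--Shapiro boundary conditions, as developed in \cite{Schwarz1995hodge} — produces a parametrix $Q$ for $(\D,\mathcal{B})$ on the Sobolev scale with $Q\circ(\D,\mathcal{B})=\Id-K_1$ and $(\D,\mathcal{B})\circ Q=\Id-K_2$, where $K_1,K_2$ are smoothing, hence compact between the relevant spaces for every $s\geq1$ and $1<p<\infty$; Fredholmness follows. The a priori estimate is read off from the left parametrix identity, $\norm{\psi}_{W^{s,p}(E)}\leq\norm{Q(\D,\mathcal{B})\psi}_{W^{s,p}(E)}+\norm{K_1\psi}_{W^{s,p}(E)}\leq c\bigl(\norm{\D\psi}_{W^{s-1,p}(E)}+\norm{\mathcal{B}\psi}_{W^{s-1/p,p}(E\vert_{\partial M})}\bigr)+c\norm{\psi}_{L^p(E)}$, the last term because $K_1$ is smoothing. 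For the independence of $\ker$ and $\operatorname{coker}$ from $(s,p)$: elliptic regularity (again from $Q$) shows that any $W^{1,p}$ solution of $(\D,\mathcal{B})\psi=0$ is smooth, so the kernel does not depend on $(s,p)$; the cokernel is identified, via the Green's formula displayed above, with the kernel of the formally adjoint problem $(\D,\mathcal{B}^{*})=(\D,\mathcal{B}^{\mp})$, which is again elliptic, so the same argument applies.

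The main obstacle is really just bookkeeping: the classical references most directly give the $p=2$ statement together with the smooth elliptic estimate, and to cover the whole range $1<p<\infty$ one relies on the $L^p$-continuity of the pseudodifferential operators of transmission type entering $Q$ — this is where I would point to \cite{Schwarz1995hodge} (or the Boutet de Monvel calculus) rather than reprove anything. Everything specific to Dirac bundles — the symbol computation and the Lopatinski--Shapiro check — is the short verification above, which is why the statement is quoted as well known.
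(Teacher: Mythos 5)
Your proposal is correct and takes the same route as the paper: the paper's entire proof is the one sentence that Fredholmness and the a-priori estimate follow from the ellipticity of the boundary value problem $(\D,\mathcal{B})$, with a citation to \cite{Schwarz1995hodge} for the general $L^p$ theory and to \cite{Bar2012boundary} (via the intertwining relation $\vect{n}\cdot X\cdot\mathcal{B}^{\pm}=\mathcal{B}^{\mp}\vect{n}\cdot X$ recorded earlier in the text) for the Lopatinski--Shapiro check. You simply spell out the details that the paper treats as known.
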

\begin{proof}It is a consequence of the fact that $(\D,\mathcal{B})$ is an elliptic operator for $s\geq1$ and $1<p<\infty$.
\end{proof}
\par
\subsection{Dirac connection spaces}
Let $E$ be a Dirac bundle. We consider the affine space of those connection $\nabla$ for which $E$ is again a Dirac bundle. Choose a connection $\nabla_0$, then for any other connection $\nabla$ on $E$,
\begin{equation*}
\nabla=\nabla_0+\Gamma.
\end{equation*}
Here $\Gamma\in\Omega^1(\End(E))$.
\begin{lem}Suppose $\nabla_0$ is a Dirac connection, then $\nabla\coloneqq\nabla_0+\Gamma$ is a Dirac connection if and only if
\begin{equation*}
\Gamma\in\Omega^1(\Lg{Ad}(E)),\quad [\Gamma,\gamma^E]=0,
\end{equation*}
where $\gamma^E$ denotes the Clifford multiplication of $E$.
\end{lem}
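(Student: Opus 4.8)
The plan is to test the three defining axioms D1, D2, D3 of a Dirac bundle against the candidate connection $\nabla=\nabla_0+\Gamma$, exploiting that they already hold for $\nabla_0$, and to observe that each of them collapses to a pointwise algebraic condition on $\Gamma$. One should keep in mind that $\Gamma$ is a priori only an element of $\Omega^1(\End(E))$ (as recorded just above the statement), so the content of the lemma is precisely that Dirac-ness cuts this down to $\Gamma\in\Omega^1(\Lg{Ad}(E))$ together with $[\Gamma,\gamma^E]=0$.

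First I would dispose of D2: it involves no connection at all — it is a compatibility statement between the fixed Hermitian metric $\hin{}{}$ and the Clifford multiplication, neither of which is changed when one passes from $\nabla_0$ to $\nabla$ — so D2 holds for $\nabla$ automatically, and only D1 and D3 remain to be analysed.

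Next, for D3, I would expand $X\hin{\psi}{\varphi}-\hin{\nabla_X\psi}{\varphi}-\hin{\psi}{\nabla_X\varphi}$ by writing $\nabla_X=(\nabla_0)_X+\Gamma(X)$. Since $\nabla_0$ is a metric connection the $\nabla_0$-part vanishes identically, and what is left is $-\hin{\Gamma(X)\psi}{\varphi}-\hin{\psi}{\Gamma(X)\varphi}$; hence $\nabla$ is a metric connection exactly when $\Gamma(X)=-\Gamma(X)^{*}$ for every $X\in TM$, i.e.\ when $\Gamma\in\Omega^1(\Lg{Ad}(E))$. For D1, I would expand $\nabla_X(Y\cdot\psi)-\nabla_XY\cdot\psi-Y\cdot\nabla_X\psi$ in the same fashion, noting that the middle term $\nabla_XY$ is computed with the Levi-Civita connection of $TM$ and is therefore common to $\nabla$ and $\nabla_0$; using that D1 holds for $\nabla_0$, the $\nabla_0$-terms together with the Levi-Civita term cancel, leaving $\Gamma(X)(Y\cdot\psi)-Y\cdot(\Gamma(X)\psi)$. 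Thus D1 holds for $\nabla$ if and only if $\Gamma(X)$ commutes with left Clifford multiplication by every tangent vector $Y$; since $TM$ generates $\Cl(M)$ fibrewise, this is equivalent to $\Gamma(X)$ commuting with the entire Clifford action $\gamma^E$, which is exactly $[\Gamma,\gamma^E]=0$.

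Combining the two equivalences yields the lemma, the "only if" direction being the same computations read backwards. I do not expect any genuine obstacle here; the one point that demands a little care is the bookkeeping in D1 between the connection on $E$ and the Levi-Civita connection on $TM$ — recognising that the latter contributes identically to $\nabla$ and $\nabla_0$ and hence drops out of the difference.
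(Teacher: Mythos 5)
Your proposal is correct and follows essentially the same approach as the paper: test each Dirac-bundle axiom for $\nabla=\nabla_0+\Gamma$ using that $\nabla_0$ already satisfies them, and observe that the difference reduces to pointwise algebraic conditions on $\Gamma$. The paper's written proof is much terser — it only spells out the verification that the Clifford multiplication remains parallel (the D1 computation, done at a point with $\nabla_XY=0$), treating the metric-compatibility of $\nabla$ as an immediate consequence of $\Gamma$ being $\Lg{Ad}(E)$-valued and leaving the converse and D2 implicit — whereas you check all three axioms and both directions explicitly, but the underlying argument is the same.
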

\begin{proof}We only need to check that $\gamma^E$ is parallel. For every $X,Y\in\Gamma(TM)$ with $\nabla_XY=0$ at the considered point, we have
\begin{equation*}
\nabla_X(\gamma^E(Y)\psi)=\nabla_{0X}(\gamma^{E}(Y)\psi)+\Gamma(X)\gamma^E(Y)\psi=\gamma^E(Y)\nabla_{0X}\psi+\gamma^E(Y)\Gamma(X)\psi=\gamma^E(Y)\nabla_X\psi.
\end{equation*}
\end{proof}
Introduce $\slashed\Gamma\coloneqq\gamma^E\circ\Gamma=\gamma^E(e_i)\Gamma(e_i)$, then
\begin{equation*}
\D=\D_0+\slashed\Gamma,
\end{equation*}
where $\D,\D_0$ are the Dirac operators associated to the connection $\nabla,\nabla_0$ respectively. From now on, we will consider the modified non-smooth connection $\nabla_0+\Gamma$, denoted by $\nabla$. All of the Sobolev spaces are associated with some fixed smooth connection $\nabla_0$. It is well known that this definition of Sobolev spaces is independent of the choice of $\nabla_0$ if $M$ is compact. However, the connection $\nabla$ need not be smooth, i.e., we only assume that $\Gamma$ belongs to some special function space. For example,
\begin{equation*}
\dif\Gamma\in L^{p^*}(M),\quad\Gamma\in L^{2p^*}(M),
\end{equation*}
where
\begin{equation*}
p^*>1,\quad\text{if}\ m=2,\quad p^* \geq \dfrac{3m-2}{4},\quad\text{if}\ m>2.
\end{equation*}
 \begin{defn}For $p\geq1$, define $\mathfrak{D}^p(E)$ to be the completion of  the subspace of $\Omega^1\left(\Lg{Ad}(E)\right)$ defined by
\begin{gather*}
\mathfrak{D}(E)=\set{\Gamma\in\Omega^1\left(\Lg{Ad}(E)\right):[\Gamma,\gamma^E]=0},
\intertext{with respect to the norm}
\norm{\Gamma}_p\coloneqq\norm{\Gamma}_{L^{2p}(M)}+\norm{\dif\Gamma}_{L^p(M)}.
\end{gather*}
We call these spaces the {\it Dirac connection spaces}.
\end{defn}

\subsection{Dirac-harmonic maps}
Let $(M^m,g)$ be a compact Riemannian spin manifold with (possibly empty) boundary $\partial M$, and $(N^n,h)$ be a compact Riemannian manifold. Concerning the definition  and  properties of  Riemannian spin manifolds, we refer the reader to \cite{Lawson1989spin} for more background material. For any $(\Phi,\Psi)\in C^{1}(M,N)\times\Gamma(\Sigma M\otimes\Phi^{-1}TN)$, we consider the following functional \cite{Chen2005regularity}
\begin{equation*}
  L(\Phi,\Psi)=\dfrac12\int_M\left(\norm{\dif\Phi}^2+\rin{\Psi}{\D\Psi}\right),
\end{equation*}
where $\rin{}{}=\RE\hin{}{}$ is the real part of the Hermitian inner product $\hin{}{}$.

A Dirac-harmonic map (see \cite{Chen2005regularity,Chen2006dirac}) then is defined to be a critical point $(\Phi,\Psi) $ of $L$. The Euler-Lagrange equations are
\begin{equation*}
\begin{cases}
\tau(\Phi)=\dfrac12\rin{\psi^{\alpha}}{e_i\cdot\psi^{\beta}}R^N(\theta_{\alpha},\theta_{\beta})\Phi_*(e_i)\eqqcolon\mathcal{R}(\Phi,\Psi),\\
\D\Psi=0,
\end{cases}
\end{equation*}
where $R^N(X,Y)\coloneqq[\nabla^N_X,\nabla^N_Y]-\nabla^N_{[X,Y]},X,Y\in\Gamma(TN)$ stands for the curvature operator of $N$ and  $\tau(\Phi)\coloneqq(\nabla_{e_i}\dif\Phi)(e_i)$ is the tension field of $\Phi$.

Embed $N$ into $\R^q$ isometrically for some  integer $q$. We may assume  there is a bounded tubular neighborhood $\tilde N$ of $N$ in $\R^q$. Let $\pi:\tilde N\To N$ be the nearest point projection. We may assume  $\pi$ can be extended smoothly to the whole $\R^q$ with compact support. Now we can derive the Euler-Lagrange equation for $L$. Let $\Phi:M\To N$ with $\Phi=(\Phi^A)$, and a spinor $\Psi=\Psi^A\otimes\partial_A\circ\Phi$ along the map $\Phi$ with $\Psi=(\Psi^{A})$ where $\Psi^A$ are spinors over $M$,and  $\partial_A=\partial/\partial z^A$. Notice that $\dif\pi\vert_{N}$ is an orthogonal projection and $\dif\pi(T^{\perp}N)=0$. In fact,
 \begin{gather*}
 \dif\pi(X)=X,\quad\forall X\in TN,
 \intertext{and}
 \dif\pi(\xi)=0,\quad\forall \xi\in T^{\perp}N
 \end{gather*}
where $T^{\perp}N$ is the normal bundle of $N$ in $\R^q$.
 Hence, restricted to $N$, we have
 \begin{equation*}
\pi^A_B\pi^B_C=\pi^A_C,\quad \pi^A_B=\pi^B_A.
 \end{equation*} It is easy to check that
\begin{equation*}
\nu^A_B(\Phi)\nabla\Phi^B=0,\quad\nu^A_B(\Phi)\Psi^B=0,
\end{equation*}
where $\nu^A_B\coloneqq\delta^A_B-\pi^A_B$.
\par
For any smooth map $\eta \in C_{0}^{\infty}(M, \R^q)$ and any smooth spinor field $\xi \in C_{0}^{\infty}(\Sigma M \otimes \R^q)$, we consider the variation
\begin{equation*}
 \Phi_t=\pi(\Phi+t\eta),\quad \Psi^A_t=\pi^A_B(\Phi_t)(\Psi^B+t\xi^B).
\end{equation*}
It is easy to check that
\begin{gather*}
 \Phi_0=\Phi,\quad\Psi_0=\Psi
\intertext{and}
\left.\dfrac{\partial\Phi^A_t}{\partial t}\right\vert_{t=0}=\pi^A_B(\Phi)\eta^B,\quad\left.\dfrac{\partial\Psi^A_t}{\partial t}\right\vert_{t=0}=\pi^A_B(\Phi)\xi^B+\pi^A_{BC}(\Phi)\pi^C_D(\Phi)\Psi^B\eta^D,
\end{gather*}
where
\begin{gather*}
\pi^A_B=\dfrac{\partial\pi^A}{\partial z^B},\quad \pi^A_{BC}=\dfrac{\partial^2\pi^A}{\partial z^B\partial z^C},\quad\dotsc.
\end{gather*}
Moreover,
\begin{equation}\label{eq:pi}
\pi^A_{BC}(\Phi)\pi^C_D(\Phi)=\pi^B_{AC}(\Phi)\pi^C_D(\Phi),\quad \pi^A_{BC}=\pi^A_{CB}.
\end{equation}
Then we have
\begin{prop}\label{prop:EL}
 The Euler-Lagrange equations for $L$ are
\begin{gather*}
 \Delta\Phi^A=\pi^A_{BC}(\Phi)\hin{\nabla\Phi^B}{\nabla\Phi^C}+\pi^A_B(\Phi)\pi^C_{BD}(\Phi)\pi^{C}_{EF}(\Phi)\rin{\Psi^D}{\nabla\Phi^E\cdot\Psi^F},
\intertext{and}
\dirac\Psi^A=\pi^A_{BC}(\Phi)\nabla\Phi^B\cdot\Psi^C.
\end{gather*}
\end{prop}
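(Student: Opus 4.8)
\emph{Plan of proof.} The plan is to work entirely in the extrinsic picture and to exploit a simplification of the Dirac part of $L$ on the constraint manifold. Recall that $\dif\pi(\Phi)$ is the orthogonal projection onto $T_{\Phi}N$ with matrix $\pi^A_B(\Phi)$, so that $\Sigma M\otimes\Phi^{-1}TN$ sits inside the trivially twisted bundle $\Sigma M\otimes\R^q$ as the image of the fibrewise projection $\tensor P\coloneqq\Id\otimes\dif\pi(\Phi)$; a section $\Psi=\Psi^A\otimes\partial_A$ of the former is exactly a section of the latter satisfying $\nu^A_B(\Phi)\Psi^B=0$, i.e. $\tensor P\Psi=\Psi$. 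Since the pullback connection on $\Phi^{-1}TN$ is $\dif\pi(\Phi)$ followed by the flat connection of $\R^q$, and Clifford multiplication acts on the $\Sigma M$-factor and hence commutes with $\tensor P$, one gets
\[
\D\Psi=e_i\cdot\nabla_{e_i}\Psi=\tensor P\left(e_i\cdot\nabla_{e_i}\Psi^A\otimes\partial_A\right)=\tensor P\left(\dirac\Psi^A\otimes\partial_A\right).
\]
Using that $\tensor P$ is self-adjoint and $\tensor P\Psi=\Psi$, this yields the key identity $\rin{\Psi}{\D\Psi}=\RE\hin{\Psi^A}{\dirac\Psi^A}$, valid for every section obeying the constraint, and hence, along the constraint manifold,
\[
L(\Phi,\Psi)=\dfrac12\int_M\left(\sum_A\norm{\nabla\Phi^A}^2+\RE\hin{\Psi^A}{\dirac\Psi^A}\right),
\]
which is quadratic in the $\R^q$-components, all the nonlinearity being carried by the constraint.

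Next I would differentiate $L(\Phi_t,\Psi_t)$ at $t=0$ along the admissible variations $\Phi_t=\pi(\Phi+t\eta)$, $\Psi^A_t=\pi^A_B(\Phi_t)(\Psi^B+t\xi^B)$. Since $\tensor P_{\Phi_t}\Psi_t=\Psi_t$ by construction, the formula above for $L$ applies verbatim to $L(\Phi_t,\Psi_t)$, and the operator $\dirac$ occurring in it does not depend on $t$. Integrating by parts — the boundary terms vanish because $\eta$ and $\xi$ are compactly supported in the interior — and using the formal self-adjointness of $\dirac$ together with $\RE\hin uv=\RE\hin vu$, one obtains
\[
\left.\dfrac{\dif}{\dif t}\right|_{t=0}L(\Phi_t,\Psi_t)=\int_M\left(-\sum_A\hin{\Delta\Phi^A}{\dot\Phi^A}+\sum_A\RE\hin{\dot\Psi^A}{\dirac\Psi^A}\right),
\]
with $\dot\Phi^A=\pi^A_B(\Phi)\eta^B$ and $\dot\Psi^A=\pi^A_B(\Phi)\xi^B+\pi^A_{BC}(\Phi)\pi^C_D(\Phi)\Psi^B\eta^D$ the first variations already computed above.

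Finally I would separate the $\xi$- and $\eta$-contributions. Setting $\eta=0$, arbitrariness of $\xi$ forces $\pi^A_B(\Phi)\dirac\Psi^A=0$ for all $B$; applying $\dirac$ to the constraint $\pi^A_B(\Phi)\Psi^B=\Psi^A$ gives $\dirac\Psi^A=\pi^A_{BC}(\Phi)\nabla\Phi^C\cdot\Psi^B+\pi^A_B(\Phi)\dirac\Psi^B$, and combining the two (together with $\pi^A_{BC}=\pi^A_{CB}$ from \eqref{eq:pi}) yields the second Euler--Lagrange equation $\dirac\Psi^A=\pi^A_{BC}(\Phi)\nabla\Phi^B\cdot\Psi^C$. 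Setting $\xi=0$, arbitrariness of $\eta$ gives $\sum_A\pi^A_D(\Phi)\Delta\Phi^A=\sum_A\pi^A_{BC}(\Phi)\pi^C_D(\Phi)\rin{\Psi^B}{\dirac\Psi^A}$; substituting the spinor equation just obtained turns the right-hand side into $\pi^A_{BC}\pi^C_D\pi^A_{EF}\rin{\Psi^B}{\nabla\Phi^E\cdot\Psi^F}$. To recover $\Delta\Phi^A$ itself rather than only its tangential part, one applies $\nabla_{e_i}$ to the constraint $\pi^A_B(\Phi)\nabla\Phi^B=\nabla\Phi^A$, obtaining $\Delta\Phi^A=\pi^A_{BC}(\Phi)\hin{\nabla\Phi^B}{\nabla\Phi^C}+\pi^A_B(\Phi)\Delta\Phi^B$, and substitutes $\pi^A_B(\Phi)\Delta\Phi^B$ from the previous display; a relabelling using $\pi^A_{BC}=\pi^A_{CB}$ and $\pi^A_B=\pi^B_A$ brings the result into the stated first equation.

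The chain-rule computations differentiating the relations $\pi(\Phi)\equiv\Phi$ valid on $N$ and the symmetry relations \eqref{eq:pi} are routine bookkeeping. The one genuinely delicate point is the reduction in the first step: one must verify carefully that the pullback connection, the Clifford multiplication, and the fibrewise projection $\tensor P$ interact as claimed — in particular the identity $\D\Psi=\tensor P(\dirac\Psi^A\otimes\partial_A)$ — and that the admissible variations never leave the constraint manifold, so that the quadratic expression for $L$ may be differentiated directly instead of differentiating the twisted Dirac action term by term.
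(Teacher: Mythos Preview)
Your proposal is correct and follows essentially the same approach as the paper: reduce $L$ to the quadratic extrinsic form $\tfrac12\int_M(\|\nabla\Phi^A\|^2+\rin{\Psi^A}{\dirac\Psi^A})$, differentiate along the given variations, integrate by parts using compact support, and combine with the identities obtained by differentiating the constraints $\pi^A_B(\Phi)\Psi^B=\Psi^A$ and $\pi^A_B(\Phi)\nabla\Phi^B=\nabla\Phi^A$. The only difference is organisational: the paper folds the constraint identities tacitly into one long rearrangement of the first-variation integrand so that the full equations appear directly against the \emph{unprojected} test fields $\eta^A,\xi^A$, whereas you first read off the projected equations $\pi^A_B\dirac\Psi^A=0$ and $\pi^A_D\Delta\Phi^A=\cdots$ and then invoke the constraints as a separate step to upgrade them --- the content is the same.
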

\begin{rem}\label{rem:2.5}
Denote
\begin{gather*}
\Omega^A_B\coloneqq\nu^A_C(\Phi)\dif\nu^C_B(\Phi)-\dif\nu^A_C(\Phi)\nu^C_B(\Phi)=[\nu(\Phi),\dif\nu(\Phi)]^A_B,\\
R^A_{GDF}\coloneqq\pi^A_B\pi^C_{BD}\pi^G_E\pi^C_{EF}-\pi^G_B\pi^C_{BD}\pi^A_E\pi^C_{EF}
\intertext{and}
\tilde\Omega^A_G\coloneqq\dfrac12R^A_{GDF}(\Phi)\rin{\Psi^D}{e_i\cdot\Psi^F}\eta^i.
\end{gather*}
Then $\Omega^A_B=-\Omega^B_A, \quad \tilde\Omega^A_B=-\tilde\Omega^B_A$ and the Euler-Lagrange equations for $L$ can be rewritten as follows
\cite{Chen2013boundary}
\begin{equation*}
\begin{cases}
\Delta\Phi^A=-\hin{\Omega^A_B}{\dif\Phi^B}+\hin{\tilde\Omega^A_B}{\dif\Phi^B},\\
\dirac\Psi^A=-\Omega^A_B\cdot\Psi^B.
\end{cases}
\end{equation*}
Using the Clifford multiplication $\cdot$ for the Dirac bundle $\Omega^{*}(M)$, we can also write the above system as follows:
\begin{equation*}
\begin{cases}
\Delta\Phi^A=\Omega^A_B\cdot\dif\Phi^B+\hin{\tilde\Omega^A_B}{\dif\Phi^B},\\
\dirac\Psi^A=-\Omega^A_B\cdot\Psi^B.
\end{cases}
\end{equation*}
\end{rem}

\begin{proof}[Proof of \autoref{rem:2.5}]   The proof is similar to \cite{Chen2013boundary}. However, we will present a proof here using our notations. Introduce
\begin{equation*}
S^{AC}_D\coloneqq\pi^A_B\pi^C_{BD},
\end{equation*}
then
\begin{equation*}
R^A_{GDF}=S^{AC}_DS^{GC}_F-S^{GC}_DS^{AC}_F
\end{equation*}
satisfies
\begin{equation*}
R^A_{GDF}=-R^G_{ADF}=-R^A_{GFD}.
\end{equation*}
Moreover,
\begin{align*}
\hin{\tilde\Omega^A_B}{\dif\Phi^B}=&\dfrac12R^A_{GDF}(\Phi)\rin{\Psi^D}{\nabla\Phi^G\cdot\Psi^F}\\
=&S^{AC}_DS^{GC}_F\rin{\Psi^D}{\nabla\Phi^E\cdot\Psi^F}\\
=&\pi^A_B\pi^C_{BD}\pi^C_{EF}\rin{\Psi^D}{\nabla\Phi^E\cdot\Psi^F}.
\end{align*}


Now we need only to check that
\begin{equation*}
\Omega^A_B\wedge\dif\Phi^B=0.
\end{equation*}
Using the fact $\nu^A_B\dif\Phi^B=0$, we have that
\begin{equation*}
\Omega^A_B\wedge\dif\Phi^B=\nu^A_C\dif\nu^C_B\wedge\dif\Phi^B=0.
\end{equation*}
\end{proof}
\begin{proof}[Proof of \autoref{prop:EL}] Note that both $\eta$ and $\xi$ have compact support in $\mathring{M}$. Since
\begin{equation*}
L(\Phi,\Psi)=\dfrac12\int_M\left(\norm{\nabla\Phi^A}^2+\rin{\Psi^A}{\dirac\Psi^A}\right),
\end{equation*}
then, by using the relationship \eqref{eq:pi},
\begin{equation*}
 \begin{split}
  &\left.\dfrac{\dif L(\Phi_t,\Psi_t)}{\dif t}\right\vert_{t=0}=\int_M\hin{\nabla\Phi^A}{\nabla(\pi^A_B\eta^B)}+\dfrac12\int_M\rin{\pi^A_B\xi^B+\pi^A_{BC}\pi^C_D\Psi^B\eta^D}{\dirac\Psi^A}\\
  &+\dfrac12\int_M\rin{\Psi^A}{\dirac\left(\pi^A_B\xi^B+\pi^A_{BC}\pi^C_D\Psi^B\eta^D\right)}\\
  =&\int_M\hin{\nabla\Phi^A}{\pi^A_B\nabla\eta^B+\pi^A_{BC}\nabla\Phi^C\eta^B}+\int_M\rin{\pi^A_B\xi^B+\pi^A_{BC}\pi^C_D\Psi^B\eta^D}{\dirac\Psi^A}\\
  &+\dfrac12\int_{\partial M}\rin{\Psi^A}{\vect{n}\cdot\left(\pi^A_B\xi^B+\pi^A_{BC}\pi^C_D\Psi^B\eta^D\right)}\\
  =&-\int_M\left(\Delta\Phi^A-\pi^A_{BC}\hin{\nabla\Phi^B}{\nabla\Phi^C}-\pi^A_B\pi^C_{BD}\pi^{C}_{EF}\rin{\Psi^D}{\nabla\Phi^E\cdot\Psi^F}\right)\eta^A\\
  &+\int_M\pi^A_B\pi^C_{BD}\rin{\Psi^D}{\dirac\Psi^C-\pi^C_{EF}\nabla\Phi^E\cdot\Psi^F}\eta^A+\int_M\rin{\dirac\Psi^A-\pi^A_{BC}\nabla\Phi^B\cdot\Psi^C}{\xi^A}\\
  &+\int_{\partial M}\nabla_{\vect{n}}\Phi^A\eta^A+\dfrac12\rin{\Psi^A}{\vect{n}\cdot\xi^A}\\
  =&-\int_M\left(\Delta\Phi^A-\pi^A_{BC}\hin{\nabla\Phi^B}{\nabla\Phi^C}-\pi^A_B\pi^C_{BD}\pi^{C}_{EF}\rin{\Psi^D}{\nabla\Phi^E\cdot\Psi^F}\right)\eta^A\\
  &+\int_M\pi^A_B\pi^C_{BD}\rin{\Psi^D}{\dirac\Psi^C-\pi^C_{EF}\nabla\Phi^E\cdot\Psi^F}\eta^A+\int_M\rin{\dirac\Psi^A-\pi^A_{BC}\nabla\Phi^B\cdot\Psi^C}{\xi^A},
 \end{split}
\end{equation*}
where $\vect{n}$ is the unit outward normal vector field along $\partial M$.
\end{proof}

\vskip0.2cm

\par

\section{Existence and uniqueness of solutions of Dirac equations with chiral boundary  conditions}\label{sec:dirac}
In this section, we suppose that $M^m$  ($m\geq2$) is a compact Riemannian spin manifold with boundary $\partial M$ and $E$ is a Dirac bundle on $M$.
We want to derive an existence and uniqueness result for solutions of Dirac equations with chiral boundary  conditions.
The key observation is that for harmonic spinors, the homogeneous chiral condition is equivalent to the zero Dirichlet boundary condition. With this observation,
we can derive a useful $L^2$-estimate for solutions of Dirac equations with chiral boundary  conditions.
The assumption that the boundary is non-empty is essential here. Another application of this observation is that
one can derive Schauder boundary estimates.
\par

\par
\subsection{A property of chiral boundary conditions}
Importantly, the chiral boundary condition is conformally invariant. Moreover, it satisfies
\begin{prop}\label{prop:chirality}Suppose that $\nabla$ is a smooth Dirac connection, then for every $\psi\in H^1(E)$, we have
\begin{equation*}
\abs{\int_{\partial M}\left(\norm{\psi}^2-2\norm{\mathcal{B}\psi}^2\right)}\leq2\norm{\psi}_{L^2(E)}\norm{\D\psi}_{L^2(E)}.
\end{equation*}
\end{prop}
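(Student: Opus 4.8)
The plan is to use the Green (self-adjointness) formula for $\D$ together with the algebraic identity $(\mathcal{B}^{\pm})^* = \mathcal{B}^{\mp}$ and $\mathcal{B}^+ + \mathcal{B}^- = \Id$ to rewrite the boundary term $\int_{\partial M}\hin{\vect{n}\cdot\psi}{\psi}$ in terms of $\norm{\mathcal{B}\psi}^2$. First I would apply the Green formula to the pair $(\psi,\psi)$:
\begin{equation*}
\int_M \hin{\D\psi}{\psi} - \int_M \hin{\psi}{\D\psi} = \int_{\partial M}\hin{\vect{n}\cdot\psi}{\psi},
\end{equation*}
so that $\int_{\partial M}\hin{\vect{n}\cdot\psi}{\psi} = 2\sqrt{-1}\,\IM\int_M\hin{\D\psi}{\psi}$ is purely imaginary; in particular $\RE\int_{\partial M}\hin{\vect{n}\cdot\psi}{\psi}=0$, but what I actually want is an estimate on the real boundary integral $\int_{\partial M}\bigl(\norm{\psi}^2 - 2\norm{\mathcal{B}\psi}^2\bigr)$, so a more careful bookkeeping is needed.

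The key step is the pointwise computation on $\partial M$. Writing $\psi = \mathcal{B}^+\psi + \mathcal{B}^-\psi$ and using $(\mathcal{B}^\pm)^*=\mathcal{B}^\mp$, $(\mathcal{B}^\pm)^2=\mathcal{B}^\pm$, $\mathcal{B}^+\mathcal{B}^-=0$, one gets $\norm{\psi}^2 = \norm{\mathcal{B}^+\psi}^2 + \norm{\mathcal{B}^-\psi}^2 + 2\RE\hin{\mathcal{B}^+\psi}{\mathcal{B}^-\psi}$ where the cross term $\hin{\mathcal{B}^+\psi}{\mathcal{B}^-\psi} = \hin{\mathcal{B}^-\mathcal{B}^+\psi}{\mathcal{B}^-\psi}=0$ after moving the adjoint; hence $\norm{\psi}^2 = \norm{\mathcal{B}^+\psi}^2 + \norm{\mathcal{B}^-\psi}^2$ pointwise on $\partial M$. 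Next, using $\vect{n}\cdot G$ anticommutes appropriately — more precisely $(\vect{n}\cdot G)\mathcal{B}^\pm = \pm\mathcal{B}^\pm$ and $\vect{n}\cdot G$ swaps nothing but acts as $\pm1$ on the ranges of $\mathcal{B}^\pm$ — one computes $\hin{\vect{n}\cdot G\psi}{\psi} = \norm{\mathcal{B}^+\psi}^2 - \norm{\mathcal{B}^-\psi}^2$. The relation I really need comes from multiplying by $\vect{n}$: since $G = \vect{n}\cdot(\vect{n}\cdot G)$ up to sign and $\vect{n}\cdot$ is skew-adjoint with $\vect{n}\cdot\vect{n}\cdot = -\Id$, I can express $\RE\hin{\vect{n}\cdot\psi}{\psi}$ or rather relate $\int_{\partial M}\hin{\vect{n}\cdot\psi}{\psi}$, after inserting $G^2 = \Id$, to $\int_{\partial M}\bigl(\norm{\mathcal{B}^+\psi}^2 - \norm{\mathcal{B}^-\psi}^2\bigr)$ in a way that, combined with $\norm{\psi}^2 = \norm{\mathcal{B}^+\psi}^2 + \norm{\mathcal{B}^-\psi}^2$, yields $\norm{\psi}^2 - 2\norm{\mathcal{B}^+\psi}^2 = -(\norm{\mathcal{B}^+\psi}^2 - \norm{\mathcal{B}^-\psi}^2)$, identifying $\int_{\partial M}(\norm{\psi}^2 - 2\norm{\mathcal{B}\psi}^2)$ with (a constant times) a genuine boundary pairing controlled by Green's formula.

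Finally I would combine the two pieces: the boundary integral $\int_{\partial M}(\norm{\psi}^2-2\norm{\mathcal{B}\psi}^2)$ equals, up to sign, $\int_{\partial M}\hin{\vect{n}\cdot G\psi}{\psi}$ or a closely related skew pairing which, via the Green formula applied to the Dirac bundle together with the facts $\nabla G = 0$ and $G\,\vect{n}\cdot = -\vect{n}\cdot G$, reduces to $\int_M\bigl(\hin{\D\psi}{G\psi} - \hin{\psi}{\D(G\psi)}\bigr) = \int_M\bigl(\hin{\D\psi}{G\psi} + \hin{G\psi}{\D\psi}\bigr)$ (using $\D G = -G\D$), whence
\begin{equation*}
\Bigl|\int_{\partial M}\bigl(\norm{\psi}^2 - 2\norm{\mathcal{B}\psi}^2\bigr)\Bigr| = \Bigl|2\RE\int_M\hin{G\psi}{\D\psi}\Bigr| \le 2\norm{G\psi}_{L^2(E)}\norm{\D\psi}_{L^2(E)} = 2\norm{\psi}_{L^2(E)}\norm{\D\psi}_{L^2(E)},
\end{equation*}
using $\norm{G\psi} = \norm{\psi}$ since $G$ is a self-adjoint involution. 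The statement for $\psi\in H^1(E)$ follows by density from the smooth case. I expect the main obstacle to be the sign bookkeeping in the pointwise boundary identity relating $\hin{\vect{n}\cdot G\psi}{\psi}$ to $\norm{\mathcal{B}^\pm\psi}^2$ and pinning down exactly which skew-symmetric bilinear pairing on $\partial M$ reproduces $\norm{\psi}^2 - 2\norm{\mathcal{B}\psi}^2$ — everything else (Green's formula, density, $\norm{G\psi}=\norm{\psi}$) is routine.
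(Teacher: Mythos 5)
Your proposal is correct and is essentially the paper's proof: the paper introduces the vector field $X=\tfrac12\rin{\psi}{e_i\cdot G\psi}e_i$ and computes $\Div X=-\rin{\D\psi}{G\psi}$, which is exactly Green's formula applied to the pair $(\psi,G\psi)$, i.e.\ the same computation you carry out, followed by the same Cauchy--Schwarz step and density argument. One repair is needed in the middle, though: the vanishing of the cross term $\hin{\mathcal{B}^+\psi}{\mathcal{B}^-\psi}$ does \emph{not} follow from $(\mathcal{B}^\pm)^*=\mathcal{B}^\mp$ as you invoke it --- moving that adjoint would give $\hin{\psi}{\mathcal{B}^-\mathcal{B}^-\psi}=\hin{\psi}{\mathcal{B}^-\psi}$, which is nonzero. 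What actually holds is that $\vect{n}\cdot G$ is a pointwise self-adjoint involution (because $\vect{n}\cdot$ is skew-adjoint and anticommutes with $G$), so the $\mathcal{B}^\pm$ are orthogonal projections for the Hermitian metric on $E\vert_{\partial M}$; the paper's $(\mathcal{B}^\pm)^*=\mathcal{B}^\mp$ refers to the adjoint boundary condition in the Green-formula pairing, not to the pointwise Hermitian adjoint. Alternatively, you can bypass the orthogonal decomposition entirely as the paper does and expand directly:
\begin{equation*}
\norm{\mathcal{B}^\pm\psi}^2=\dfrac14\norm{(\Id\pm\vect{n}\cdot G)\psi}^2=\dfrac12\norm{\psi}^2\pm\dfrac12\rin{\psi}{\vect{n}\cdot G\psi},
\end{equation*}
which yields $\norm{\psi}^2-2\norm{\mathcal{B}^\pm\psi}^2=\mp\rin{\psi}{\vect{n}\cdot G\psi}$ and removes the sign bookkeeping you were worried about.
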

\begin{proof}First, we assume that $\psi$ is smooth. Introduce a vector field
\begin{equation*}
X\coloneqq\dfrac12\rin{\psi}{e_i\cdot G\psi}e_i,
\end{equation*}
then
\begin{gather*}
\hin{X}{\vect{n}}=\dfrac12\hin{\psi}{\vect{n}\cdot G\psi},\\
\norm{\mathcal{B^{\pm}}\psi}^2=\dfrac12\norm{\psi}^2\pm\hin{X}{\vect{n}},
\intertext{and}
\Div X=-\rin{\D\psi}{G\psi}.
\end{gather*}
Using these facts and  integrating by parts, we get that
\begin{equation*}
\abs{\int_{\partial M}\left(\norm{\psi}^2-2\norm{\mathcal{B}\psi}^2\right)}=2\abs{\int_M\rin{\D\psi}{G\psi}}\leq2\norm{\D\psi}_{L^2(E)}\norm{\psi}_{L^2(E)}.
\end{equation*}
The general case follows since $\Gamma(E)$ is dense in $H^1(E)$.
\end{proof}
\begin{rem}This proposition says that the following two systems
\begin{equation*}
\begin{cases}
\D\psi=0,&M;\\
\mathcal{B}\psi=0,&\partial M,
\end{cases}
\quad
\begin{cases}
\D\psi=0,&M;\\
\psi=0,&\partial M,
\end{cases}
\end{equation*}
are equivalent. This fact is important for our whole theory of Dirac equations. With this observation, we then can solve Dirac equations with chiral boundary value conditions.
\end{rem}
\par
\begin{rem}
  \autoref{prop:chirality} also holds for $J$-boundary operators. The proof is similar to \autoref{prop:chirality} and we omit it here. Moreover, all  the results in the sequel associated to  chiral boundary values are also valid for $J$-boundary values. Again, we omit the proof.
\end{rem}
\par

 \par
 \subsection{Regularity of weak solutions and elliptic estimates}
\begin{defn}Suppose that $\slashed\Gamma\in L^{p'}_{loc}(M)$. Let $\psi,\varphi\in L^{p}_{loc}(E)$ where $1/p+1/p'=1$ with $p\geq 1$. We call $\varphi$  a weak solution of the Dirac equation $\D\psi=\varphi$ if
\begin{equation*}
\int_M\hin{\varphi}{\eta}=\int_{M}\hin{\psi}{\D\eta},
\end{equation*}
holds for all smooth spinors $\eta\in\Gamma_0(E)$ of $E$ with compact support in the interior of $M$.
\end{defn}
Define
\begin{equation*}
\hat{m}>2\quad\text{for}\ m=2;\quad\hat{m}=m,\quad \text{for}\ m>2
\end{equation*}
\par
First, we have the following regularity results.
\begin{theorem}[Regularity of weak solutions, see \cite{Bartnik2005boundary,Bar2012boundary}]\label{thm:regularity_weak}Let $M,E$ be as in \autoref{thm:dirac}. Suppose that $\slashed\Gamma\in W^{k,\hat{m}}_{loc}(M)$. Let $\psi\in L^2_{loc}(E)$ be a weak solution of $\D\psi=\varphi$ with $\varphi\in H^k_{loc}(E)$, then $\psi\in H^{k+1}_{loc}(E)$.
\end{theorem}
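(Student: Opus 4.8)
The plan is to prove this by induction on $k$, using interior elliptic regularity for the smooth Dirac operator $\D_0$ together with a bootstrap argument that absorbs the non-smooth lower-order term $\slashed\Gamma$. First I would rewrite the equation: since $\D = \D_0 + \slashed\Gamma$ with $\D_0$ the Dirac operator of the fixed \emph{smooth} connection $\nabla_0$, the weak equation $\D\psi=\varphi$ becomes the weak equation $\D_0\psi = \varphi - \slashed\Gamma\cdot\psi =: \tilde\varphi$. The point is that $\D_0$ is a first-order elliptic operator with smooth coefficients, for which the standard interior $L^p$ (and $W^{k,p}$) regularity theory applies on any interior coordinate chart: if $\D_0\psi = \tilde\varphi$ weakly with $\psi\in L^s_{loc}$ and $\tilde\varphi\in W^{j,s}_{loc}$, then $\psi\in W^{j+1,s}_{loc}$. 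So everything reduces to showing, step by step, that $\tilde\varphi = \varphi - \slashed\Gamma\cdot\psi$ lies in the Sobolev space one expects, given the current regularity of $\psi$ and the hypotheses $\slashed\Gamma\in W^{k,\hat m}_{loc}$, $\varphi\in H^k_{loc}$.

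The base case is the crux of the whole induction, and I expect it to be the main obstacle. We start with only $\psi\in L^2_{loc}$ and $\slashed\Gamma\in W^{0,\hat m}_{loc} = L^{\hat m}_{loc}$ (the case $k=0$), with $\hat m = m$ when $m>2$ and $\hat m$ some fixed exponent $>2$ when $m=2$. The term $\slashed\Gamma\cdot\psi$ a priori only lies in $L^r_{loc}$ with $1/r = 1/\hat m + 1/2$, which for $m>2$ gives $r = \tfrac{2m}{m+2}$; this is exactly the exponent for which the Sobolev embedding $W^{1,r}\hookrightarrow L^2$ holds, so the first bootstrap step only recovers $\psi\in W^{1,r}_{loc}$ — no immediate gain in the Lebesgue exponent, but $W^{1,r}\hookrightarrow L^{r^*}$ with $r^* = \tfrac{2m}{m-2}>2$. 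Feeding this improved integrability of $\psi$ back into $\slashed\Gamma\cdot\psi$ and iterating, the Lebesgue exponent of $\psi$ strictly increases at each step; after finitely many iterations one reaches $\psi\in W^{1,2}_{loc}=H^1_{loc}$ (the borderline exponent $\hat m=m$ is precisely what makes this Moser-type iteration terminate, and it is the reason for the hypothesis on $\slashed\Gamma$; one must be slightly careful near the critical case and can lose an $\eps$ in intermediate exponents that is then recovered). For $m=2$ the argument is easier since $\slashed\Gamma\in L^{\hat m}$ with $\hat m>2$ and $\psi\in L^s$ with $s$ arbitrarily large already after one step, by the two-dimensional Sobolev embedding $W^{1,r}\hookrightarrow L^s$ for all $s<\infty$ when $r\geq 1$, and in fact $W^{1,r}\hookrightarrow C^0$ once $r>2$.

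For the inductive step, assume the statement for $k-1$, so that $\psi\in H^k_{loc}$ already (applying the case $k-1$ with the same $\varphi$, which lies in $H^{k}_{loc}\subset H^{k-1}_{loc}$, and the same $\slashed\Gamma\in W^{k,\hat m}_{loc}\subset W^{k-1,\hat m}_{loc}$). Now we must show $\tilde\varphi = \varphi - \slashed\Gamma\cdot\psi\in H^k_{loc}$; since $\varphi\in H^k_{loc}$ by hypothesis, it suffices to control the product $\slashed\Gamma\cdot\psi$ with $\slashed\Gamma\in W^{k,\hat m}_{loc}$ and $\psi\in H^k_{loc}$. This is a standard product (Moser/Kato–Ponce-type) estimate: distribute up to $k$ derivatives by the Leibniz rule, and for each term estimate a product of $W^{k-j,\hat m}$ and $W^{j,2}$ factors using the multiplicative Sobolev inequalities (Gagliardo–Nirenberg interpolation), where the exponent condition $\hat m\geq m$ guarantees that all the relevant Sobolev embeddings $W^{i,\hat m}\hookrightarrow L^{q}$ close up. One should localize with a cutoff $\chi\in C_0^\infty$ of the interior chart and note that $\chi\psi$ satisfies an equation of the same type with $\varphi$ replaced by $\chi\varphi + (\dirac\chi)\psi$ (a commutator term of lower order that is handled by the previous induction step), so that the interior estimates can be applied on $\R^m$. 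Then interior $W^{k,p}$-regularity for $\D_0$ promotes $\psi$ from $H^k_{loc}$ to $H^{k+1}_{loc}$, completing the induction. The only genuinely delicate point is, as noted, making the base-case iteration terminate at the critical exponent $\hat m = m$; this is precisely where one invokes the borderline Sobolev/Moser iteration rather than a single application of an embedding.
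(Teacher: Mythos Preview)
The paper does not supply its own proof of this theorem; it is quoted from \cite{Bartnik2005boundary,Bar2012boundary}. Your overall strategy --- rewrite the equation as $\D_0\psi=\varphi-\slashed\Gamma\psi$, apply interior regularity for the smooth operator $\D_0$, and bootstrap by induction on $k$ --- is the standard one and matches what those references do. The inductive step and the localization by cutoffs are fine.

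There is, however, a genuine gap in your base case $k=0$ when $m>2$. With $\psi\in L^2_{loc}$ and $\slashed\Gamma\in L^m_{loc}$ you correctly find $\slashed\Gamma\psi\in L^r_{loc}$ with $r=\tfrac{2m}{m+2}$, hence $\psi\in W^{1,r}_{loc}$. But the Sobolev exponent of this $r$ is
\[
\frac{1}{r^*}=\frac{1}{r}-\frac{1}{m}=\frac{1}{m}+\frac{1}{2}-\frac{1}{m}=\frac{1}{2},
\]
so $r^*=2$, not $\tfrac{2m}{m-2}$. The iteration therefore returns you exactly to $L^2$: at the \emph{critical} exponent $\hat m=m$ the Moser-type loop does not gain, and no finite number of steps reaches $H^1$. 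Your parenthetical remark about ``losing an $\varepsilon$'' does not resolve this, because there is no $\varepsilon$ of room in the hypothesis.

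The missing ingredient is the smallness decomposition that the paper itself uses a few lines later in the proof of the $L^p$-estimate (\autoref{thm:Lp-estimate}): split $\slashed\Gamma=\slashed\Gamma_\varepsilon+\slashed\Gamma_\infty$ with $\norm{\slashed\Gamma_\varepsilon}_{L^m}\le\varepsilon$ and $\slashed\Gamma_\infty\in L^\infty$. Then $\slashed\Gamma_\infty\psi\in L^2_{loc}$, while the contribution of $\slashed\Gamma_\varepsilon\psi$ to the $W^{1,2}$-estimate is controlled by $\varepsilon\norm{\psi}_{W^{1,2}}$ (via $\norm{\slashed\Gamma_\varepsilon\psi}_{L^2}\le\varepsilon\norm{\psi}_{L^{2m/(m-2)}}\le c\,\varepsilon\norm{\psi}_{W^{1,2}}$) and can be absorbed on the left for $\varepsilon$ small. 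This yields $\psi\in H^1_{loc}$ in one stroke, after which your induction proceeds as written. For $m=2$ the hypothesis $\hat m>2$ is supercritical and your argument already works.
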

\begin{theorem}[$L^p$-estimate]\label{thm:Lp-estimate}Let $M,E$ be as in \autoref{thm:dirac}. Suppose that $p\in(1,\infty),\slashed\Gamma\in L^{\mu}(M)$ where $\mu=m$ if $p<m$ and $\mu>p$ if $p\geq m$. Let $\psi\in W^{1,p}(E)$  be a solution of $\D\psi=\varphi$ with $\varphi\in L^p(E)$, then there exists a constant $c=c\left(p,\slashed\Gamma\right)>0$ with
\begin{equation*}
\norm{\psi}_{W^{1,p}(E)}\leq c\left(\norm{\varphi}_{L^p(E)}+\norm{\mathcal{B}\psi}_{W^{1-1/p,p}(E\vert_{\partial M})}+\norm{\psi}_{L^p(E)}\right).
\end{equation*}
\end{theorem}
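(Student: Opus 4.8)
The plan is to treat the perturbation term $\slashed\Gamma\psi$ as lower order and to reduce the estimate to the a priori estimate for the \emph{smooth} operator $\D_0$. Since $\D=\D_0+\slashed\Gamma$, the equation $\D\psi=\varphi$ becomes $\D_0\psi=\varphi-\slashed\Gamma\psi$. Applying \autoref{thm:fredholm} with $s=1$ to the elliptic boundary value problem $(\D_0,\mathcal B)$ produces a constant $c_0=c_0(p,M,\partial M,\D_0,\mathcal B)>0$ with
\begin{equation*}
\norm{\psi}_{W^{1,p}(E)}\leq c_0\left(\norm{\varphi}_{L^p(E)}+\norm{\slashed\Gamma\psi}_{L^p(E)}+\norm{\mathcal B\psi}_{W^{1-1/p,p}(E\vert_{\partial M})}+\norm{\psi}_{L^p(E)}\right).
\end{equation*}
Thus the whole problem reduces to estimating $\norm{\slashed\Gamma\psi}_{L^p(E)}$ by a small multiple of $\norm{\psi}_{W^{1,p}(E)}$ plus a (possibly large) constant times $\norm{\psi}_{L^p(E)}$.

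First I would bound $\norm{\slashed\Gamma\psi}_{L^p}$ by H\"older's inequality and the Sobolev embedding theorem. Write $\tfrac1p=\tfrac1\mu+\tfrac1r$, so that $r>p$. If $p<m$ then $\mu=m$ and $r=mp/(m-p)$ is the critical Sobolev exponent, with $W^{1,p}(M)\hookrightarrow L^r(M)$; if $p=m$ then $\mu>m$, $r<\infty$, and $W^{1,m}(M)\hookrightarrow L^r(M)$; if $p>m$ then $\mu>p$ and $W^{1,p}(M)\hookrightarrow L^\infty(M)\hookrightarrow L^r(M)$. In each case
\begin{equation*}
\norm{\slashed\Gamma\psi}_{L^p(E)}\leq\norm{\slashed\Gamma}_{L^\mu(M)}\norm{\psi}_{L^r(E)}\leq c_S\norm{\slashed\Gamma}_{L^\mu(M)}\norm{\psi}_{W^{1,p}(E)},
\end{equation*}
where $c_S$ is the embedding constant. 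By itself this is insufficient, since $\norm{\slashed\Gamma}_{L^\mu}$ need not be small and hence the term cannot be absorbed on the left.

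To remedy this I would exploit the absolute continuity of the $L^\mu$-norm: split $\slashed\Gamma=\slashed\Gamma_K+\slashed\Gamma^K$ with $\slashed\Gamma_K\coloneqq\slashed\Gamma\cdot\mathbf 1_{\set{\abs{\slashed\Gamma}\leq K}}$ and $\slashed\Gamma^K\coloneqq\slashed\Gamma-\slashed\Gamma_K$. Then $\norm{\slashed\Gamma_K}_{L^\infty(M)}\leq K$, while $\norm{\slashed\Gamma^K}_{L^\mu(M)}\to0$ as $K\to\infty$ by dominated convergence; fix $K=K(\slashed\Gamma,p)$ so large that $c_0c_S\norm{\slashed\Gamma^K}_{L^\mu(M)}\leq\tfrac12$. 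Applying the previous estimate to $\slashed\Gamma^K$ and the trivial $L^\infty$ bound to $\slashed\Gamma_K$ gives $\norm{\slashed\Gamma\psi}_{L^p(E)}\leq K\norm{\psi}_{L^p(E)}+\tfrac1{2c_0}\norm{\psi}_{W^{1,p}(E)}$. Inserting this into the first inequality and absorbing $\tfrac12\norm{\psi}_{W^{1,p}(E)}$ on the left yields
\begin{equation*}
\norm{\psi}_{W^{1,p}(E)}\leq2c_0\left(\norm{\varphi}_{L^p(E)}+\norm{\mathcal B\psi}_{W^{1-1/p,p}(E\vert_{\partial M})}+(K+1)\norm{\psi}_{L^p(E)}\right),
\end{equation*}
which is the claimed estimate with $c=2c_0(K+1)$. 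Note that $K$ depends on $\slashed\Gamma$ itself and not merely on one of its norms; this is exactly the form asserted, and it is genuinely weaker than \eqref{main estimate} of \autoref{thm:dirac}, the latter being recovered only later, once the uniqueness result allows the $\norm{\psi}_{L^p(E)}$ term to be dropped and the constant to depend on $\norm{\Gamma}_{p^*}$ alone.

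I expect the one real obstacle to be the borderline case $p<m$, $\mu=m$, where the embedding $W^{1,p}\hookrightarrow L^{mp/(m-p)}$ is critical: the constant $c_S\norm{\slashed\Gamma}_{L^m}$ cannot be made small by any scaling or interpolation trick, so the truncation argument above, resting on $\norm{\slashed\Gamma^K}_{L^m}\to0$, is the indispensable ingredient. The remaining ingredients (the choice of H\"older exponents, the embeddings for $p\geq m$, and the final absorption) are routine.
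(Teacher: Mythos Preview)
Your argument is correct and is essentially the same as the paper's: both reduce to the smooth estimate of \autoref{thm:fredholm} for $\D_0$, decompose $\slashed\Gamma$ into a piece that is small in $L^\mu$ and a piece that is bounded in $L^\infty$, and then absorb via H\"older plus the Sobolev embedding $W^{1,p}\hookrightarrow L^{p\mu/(\mu-p)}$. The paper's decomposition $\slashed\Gamma=\slashed\Gamma_\varepsilon+\slashed\Gamma_\infty$ is exactly your truncation $\slashed\Gamma=\slashed\Gamma^K+\slashed\Gamma_K$, and your case analysis of the embedding is in fact more explicit than the paper's.
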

\begin{proof}For every $\varepsilon>0$, decompose $\slashed\Gamma=\slashed\Gamma_{\varepsilon}+\slashed\Gamma_{\infty}$ with
\begin{equation*}
\norm{\slashed\Gamma_{\varepsilon}}_{L^{\mu}(E)}\leq\varepsilon,\quad\norm{\slashed\Gamma_{\infty}}_{L^{\infty}(E)}\leq c(\varepsilon).
\end{equation*}
 Noticing that
\begin{equation*}
\D_0\psi=\varphi-\slashed\Gamma\psi=\varphi-\slashed\Gamma_{\varepsilon}\psi-\slashed\Gamma_{\infty}\psi,
\end{equation*}
we have
\begin{equation*}
\begin{split}
\norm{\D_0\psi}_{L^p(E)}\leq&\norm{\varphi}_{L^p(E)}+\norm{\slashed\Gamma_{\varepsilon}\psi}_{L^p(E)}+\norm{\slashed\Gamma_{\infty}\psi}_{L^{p}(E)}\\
\leq&\norm{\varphi}_{L^p(E)}+\norm{\slashed\Gamma_{\varepsilon}}_{L^{\mu}(E)}\norm{\psi}_{L^{p\mu/(\mu-p)}(E)}+\norm{\slashed\Gamma_{\infty}}_{L^{\infty}(E)}\norm{\psi}_{L^{p}(E)}\\
\leq&\norm{\varphi}_{L^p(E)}+\varepsilon\norm{\psi}_{W^{1,p}(E)}+c(\varepsilon)\norm{\psi}_{L^{p}(E)}
\end{split}
\end{equation*}
since $1/p-1/\mu=1/p-1/m$ for $p<m$ and
\begin{equation*}
1/p-1/\mu>1/p-1/m
\end{equation*}
for $\mu>p\geq m$. Hence, by \autoref{thm:fredholm}, for suitable $\varepsilon>0$, we get that
\begin{equation*}
\norm{\psi}_{W^{1,p}(E)}\leq c\left(\norm{\varphi}_{L^p(E)}+\norm{\mathcal{B}\psi}_{W^{1-1/p,p}(E\vert_{\partial M})}+\norm{\psi}_{L^p(E)}\right).
\end{equation*}
\end{proof}

\begin{rem}\label{rem:mu>2}
If $\mu>m$, we can choose  $c=c\left(p,\norm{\slashed\Gamma}_{L^{\mu}(M)}\right)>0$.
\end{rem}
\vspace{3ex}
\par
{\it Proof of \autoref{rem:mu>2}.}
We need only to check  the case of a smooth spinor, i.e., $\psi\in\Gamma(E)$. If not, suppose that there exists a sequence $\psi_n\in\Gamma(E)$ and $\slashed\Gamma_n\in L^{\mu}(M)$ such that
\begin{gather*}
1=\norm{\psi_n}_{W^{1,p}(E)}\geq n\left(\norm{\D_n\psi_n}_{L^p(E)}+\norm{\mathcal{B}\psi_n}_{W^{1-1/p,p}(E\vert_{\partial M})}+\norm{\psi_n}_{L^p(E)}\right),
\intertext{and}
\norm{\slashed\Gamma_n}_{L^{\mu}(M)}\leq1,
\end{gather*}
where $\D_n=\D_0+\slashed\Gamma_n$. Then for $\max\set{p,m}<p'<\mu$, $\slashed\Gamma_n$ is a bounded subset in $L^{p'}(E)$ and hence there exists a subsequence, denoted also by $\slashed\Gamma_n$, that converges weakly to $\slashed\Gamma\in L^{p'}(M)$ in the reflexive space $L^{p'}(E)$. We may assume that
\begin{equation*}
\psi_n\rightharpoonup\psi\quad W^{1,p}(E),\quad\psi_n\to\psi\quad L^{\tilde p}(E)
\end{equation*}
according to the Sobolev-Kondrachev embedding theorem where $1/\tilde p=1/p-1/p'>1/p-1/m$. Hence $\psi=0$ by the choice of $\psi_n$.  Moreover, if we denote $\D=\D_0+\slashed\Gamma$, then
\begin{gather*}
\norm{\D\psi_n}_{L^p(E)}\leq\norm{\D_n\psi_n}_{L^p(E)}+\norm{\left(\slashed\Gamma-\slashed\Gamma_n\right)\psi_n}_{L^p(E)}\leq\norm{\D_n\psi_n}_{L^p(E)}+\norm{\slashed\Gamma-\slashed\Gamma_n}_{L^{p'}(M)}\norm{\psi_n}_{L^{\tilde p}(E)}.
\end{gather*}
In particular,  $\D\psi_n$ converges strongly to $0$ in $L^p(E)$ and so does $\slashed\Gamma_n\psi_n$. But we know already that
\begin{equation*}
\mathcal{B}\psi_n\to 0 \quad W^{1-1/p,p}(E\vert_{\partial M}),\quad \psi_n\to 0\quad L^p(E).
\end{equation*}
The $L^p$-estimate \autoref{thm:Lp-estimate} implies that
\begin{equation*}
1=\norm{\psi_n}_{W^{1,p}(E)}\leq c(p,\slashed\Gamma)\left(\norm{\D\psi_n}_{L^p(E)}+\norm{\mathcal{B}\psi_n}_{W^{1-1/p,p}(E\vert_{\partial M})}+\norm{\psi_n}_{L^p(E)}\right)\to 0.
\end{equation*}

\

\begin{rem}\label{rem:3.4}By a similar computation,  \autoref{prop:chirality} holds for the case of a non-smooth connection $\nabla=\nabla_0+\Gamma$ with $\Gamma\in L^{\hat{m}}(M)$.
\end{rem}
\par
\subsection{$L^2$-estimate}
In this subsection, we want to prove that the solution of the Dirac equation with chiral boundary values is unique, i.e.,
\begin{equation*}
\begin{cases}
\D\psi=0,&M;\\
\mathcal{B}\psi=0,&\partial M
\end{cases}
\end{equation*}
has only the zero solution. In dimension $m=2$, we recall H\"ormander's  $L^2$-estimate method \cite{Hoermander1965l2estimate}
which was originally developed to  get the $L^2$-existence theorem for $\bar\partial$-operators on weakly pseudo-convex domains by
using Carleman-type estimates. Later, Shaw in \cite{Shaw1985l2estimate} extended this method to $\bar\partial_b$-manifolds.
Here, we use a similar idea to derive the $L^2$-estimate for the Dirac equations and use this $L^2$-estimate to get the uniqueness of solutions
of Dirac equations with chiral boundary values. In higher dimension $m>2$, we use the weak Uniqueness Continuation Property (WUCP) for
Dirac type operator to get the uniqueness. We shall then use this uniqueness to derive some useful elliptic estimates.

We will need the following Weitzenb\"ock type formula (c.f. \cite{Jost2011riemannian})
\begin{equation}\label{eq:Weitzenbock}
\D^2=-\nabla^2+\mathcal{R},
\end{equation}
where the curvature operator $\mathcal{R}$ is given by
\begin{equation*}
\mathcal{R}\coloneqq\dfrac12e_i\cdot e_j\cdot R(e_i,e_j)
\end{equation*}
and $R(X,Y)=[\nabla_X,\nabla_Y]-\nabla_{[X,Y]}$ is the curvature of the connection $\nabla=\nabla_0+\Gamma$.
\par
\begin{theorem}[Weighted Reilly formula]\label{thm:reilly-w}Let $M,E$ be as in \autoref{thm:dirac} and suppose $\Gamma\in\mathcal{D}^{p*}(E)$. Let $f$ be a smooth function, then for every $\psi\in\Gamma(E)$, we have
\begin{equation*}
\begin{split}
&\int_{\partial M}\exp(f)\left(\rin{\bar\D\psi}{\psi}+\dfrac{m-1}{2}\left(h+\vect{n}(f)\right)\norm{\psi}^2\right)+\dfrac{m-1}{m}\int_M\exp\left(f\right)\norm{\D\psi}^2\\
=&\int_{M}\exp(f)\left(\dfrac{m-1}{2}\Delta f-\dfrac{(m-1)(m-2)}{4}\norm{\nabla f}^2+\mathcal{R}_{\psi}\right)\norm{\psi}^2\\
&+\int_M\exp((1-m)f)\norm{P\left(\exp\left(\dfrac{m}{2}f\right)\psi\right)}^2.
\end{split}
\end{equation*}
Here
\begin{equation*}
\mathcal{R}_{\psi}\norm{\psi}^2=\rin{\mathcal{R}\psi}{\psi}.
\end{equation*}
\end{theorem}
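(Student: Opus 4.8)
The plan is to prove the identity first for a \emph{smooth} Dirac connection $\nabla=\nabla_0+\Gamma$, where the Weitzenb\"ock formula \eqref{eq:Weitzenbock} applies pointwise to $\psi\in\Gamma(E)$, and then to obtain the general case $\Gamma\in\mathfrak{D}^{p^*}(E)$ by approximation. Throughout I write $P$ for the twistor (Penrose) operator of $\nabla$, $P_X\psi\coloneqq\nabla_X\psi+\tfrac1m X\cdot\D\psi$, and use only its two standard properties, $e_i\cdot P_{e_i}\psi=0$ and the pointwise identity $\norm{\nabla\psi}^2=\norm{P\psi}^2+\tfrac1m\norm{\D\psi}^2$. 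I use repeatedly that $\nabla$ is metric, so $\rin{\nabla_X\psi}{\psi}=\tfrac12 X\norm{\psi}^2$, that Clifford multiplication by real vectors is skew-adjoint, and that $(\nabla f)\cdot(\nabla f)\cdot\psi=-\norm{\nabla f}^2\psi$, where $(\nabla f)\cdot$ denotes Clifford multiplication by the gradient vector field. \emph{Step 1 (weighted integrated Weitzenb\"ock identity).} Pair \eqref{eq:Weitzenbock} with $\exp(f)\psi$ and integrate over $M$, integrating by parts in both terms: for the $\D^2$ term use the Green formula for $\D$ together with $\D(\exp(f)\psi)=\exp(f)\bigl((\nabla f)\cdot\psi+\D\psi\bigr)$ and skew-adjointness of Clifford multiplication; for the connection-Laplacian term use the divergence theorem and $\nabla_X(\exp(f)\psi)=X(f)\exp(f)\psi+\exp(f)\nabla_X\psi$. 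Substituting the Pythagoras identity for $\norm{\nabla\psi}^2$ then yields
\begin{equation*}
\begin{split}
\tfrac{m-1}{m}\int_M\exp(f)\norm{\D\psi}^2={}&\int_M\exp(f)\left(\norm{P\psi}^2+\mathcal R_\psi\norm{\psi}^2+\rin{(\nabla f)\cdot\D\psi}{\psi}+\tfrac12\hin{\nabla f}{\nabla\norm{\psi}^2}\right)\\
&-\int_{\partial M}\exp(f)\left(\rin{\vect{n}\cdot\D\psi}{\psi}+\rin{\nabla_{\vect{n}}\psi}{\psi}\right).
\end{split}
\end{equation*}

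\emph{Step 2 (the weighted twistor term).} From $P_{e_i}\bigl(\exp(\tfrac m2 f)\psi\bigr)=\exp(\tfrac m2 f)\bigl(P_{e_i}\psi+\tfrac m2 e_i(f)\psi+\tfrac12 e_i\cdot(\nabla f)\cdot\psi\bigr)$, squaring, summing over $i$, and using $e_i\cdot P_{e_i}\psi=0$ to annihilate the mixed Clifford term together with $(\nabla f)\cdot(\nabla f)\cdot\psi=-\norm{\nabla f}^2\psi$, one obtains the pointwise identity
\begin{equation*}
\exp((1-m)f)\,\norm{P\bigl(\exp(\tfrac m2 f)\psi\bigr)}^2=\exp(f)\left(\norm{P\psi}^2+\tfrac{m(m-1)}{4}\norm{\nabla f}^2\norm{\psi}^2+\tfrac m2\hin{\nabla f}{\nabla\norm{\psi}^2}+\rin{(\nabla f)\cdot\D\psi}{\psi}\right).
\end{equation*}
The exponents $\tfrac m2$ inside $P$ and $1-m$ outside are chosen exactly so that the right-hand side reproduces the cross term $\rin{(\nabla f)\cdot\D\psi}{\psi}$ appearing in Step 1. \emph{Step 3 (assembly).} Using Step 2 to eliminate $\int_M\exp(f)\norm{P\psi}^2$ in Step 1, the two $\rin{(\nabla f)\cdot\D\psi}{\psi}$ terms cancel and the two occurrences of $\hin{\nabla f}{\nabla\norm{\psi}^2}$ combine into $-\tfrac{m-1}{2}\int_M\exp(f)\hin{\nabla f}{\nabla\norm{\psi}^2}$. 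Integrating this by parts, via $\exp(f)\nabla f=\nabla\exp(f)$ and $\Delta\exp(f)=\exp(f)(\Delta f+\norm{\nabla f}^2)$, produces a bulk contribution $\tfrac{m-1}{2}\Delta f+\tfrac{m-1}{2}\norm{\nabla f}^2$, the last of which combines with the $\tfrac{m(m-1)}{4}\norm{\nabla f}^2$ of Step 2 into $-\tfrac{(m-1)(m-2)}{4}\norm{\nabla f}^2$, together with a boundary term $-\tfrac{m-1}{2}\int_{\partial M}\exp(f)\,\vect{n}(f)\norm{\psi}^2$. Finally rewrite the remaining boundary integrand via $\bar\D=\vect{n}\cdot\D+\nabla_{\vect{n}}-\tfrac{m-1}{2}h$, which gives $\rin{\vect{n}\cdot\D\psi}{\psi}+\rin{\nabla_{\vect{n}}\psi}{\psi}=\rin{\bar\D\psi}{\psi}+\tfrac{m-1}{2}h\norm{\psi}^2$; collecting all contributions and moving the boundary terms to the left yields the asserted formula.

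\emph{Step 4 (non-smooth $\Gamma$ and the main obstacle).} For $\Gamma\in\mathfrak{D}^{p^*}(E)$ choose smooth $\Gamma_k\in\mathfrak{D}(E)$ with $\Gamma_k\to\Gamma$ in $\mathfrak{D}^{p^*}(E)$. Since $M$ is compact, $\slashed\Gamma_k\to\slashed\Gamma$ in $L^{2p^*}$, while the curvature operators $\mathcal R_k=\tfrac12 e_i\cdot e_j\cdot R_k(e_i,e_j)$ converge to $\mathcal R$ in $L^{p^*}$ (the $\dif\Gamma_k$ and $\Gamma_k\wedge\Gamma_k$ contributions converge in $L^{p^*}$, the lower order ones in $L^{2p^*}\hookrightarrow L^{p^*}$); moreover, when expanded, the boundary integrand $\rin{\bar\D\psi}{\psi}$ involves only the restriction of $\Gamma$ to directions tangent to $\partial M$, for which $\mathfrak{D}^{p^*}$-convergence still suffices to pass to the limit. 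With $\psi\in\Gamma(E)$ and $f$ fixed and smooth, every term of the identity of Steps 1--3 then depends continuously on $\Gamma_k$ in the relevant norms, and the identity passes to the limit. I expect the genuine work to lie in Step 2 together with the coefficient bookkeeping of Step 3: one has to check that this precise weighting of the twistor term absorbs the $(\nabla f)\cdot\D\psi$ cross term and, after the integration by parts and the substitution of the boundary relation, leaves exactly the clean expression $\tfrac{m-1}{2}\Delta f-\tfrac{(m-1)(m-2)}{4}\norm{\nabla f}^2+\mathcal R_\psi$; the integrations by parts themselves are routine.
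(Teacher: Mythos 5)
Your proposal is correct; I checked the computations in Steps 1--3 and they yield exactly the stated identity. The approach is essentially the same as the paper's in its ingredients (Weitzenb\"ock formula, Pythagoras for the twistor operator, weighted integration by parts, and the relation $\bar\D=\vect{n}\cdot\D+\nabla_{\vect{n}}-\tfrac{m-1}{2}h$), but your organization of the calculation is genuinely different and in my view cleaner. The paper instead computes the pointwise Laplacian $\tfrac12\Delta(\exp(f)\norm{\psi}^2)$, arrives at an identity with the ``unnormalized'' weights $\exp(-f)\norm{P(\exp(f)\psi)}^2$, has to introduce an auxiliary pointwise identity for $\norm{\D(\exp(\tfrac{m-2}{2(m-1)}f)\psi)}^2$ in order to absorb the $\rin{\D\psi}{\nabla f\cdot\psi}$ cross term, and only at the very end performs the substitution $g=f/(m-1)$, $\sigma=\exp(\tfrac{m-2}{2(m-1)}f)\psi$ to produce the exponents $\tfrac m2$ and $1-m$ that appear in the final statement. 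By pairing the Weitzenb\"ock formula against $\exp(f)\psi$ and integrating at once, and by choosing the exponent $\tfrac m2 f$ inside $P$ from the outset (which, as you verified, is precisely what makes the $\rin{(\nabla f)\cdot\D\psi}{\psi}$ cross term cancel and leaves $-\tfrac{m-1}{2}\hin{\nabla f}{\nabla\norm{\psi}^2}$), you bypass both the auxiliary $\norm{\D(\dots)}^2$ identity and the final change of variables, so the calculation is shorter and the origin of the constants is more transparent. Step 4, the approximation for $\Gamma\in\mathfrak{D}^{p^*}(E)$, is handled at the same (brief) level of detail as in the paper; the hand-waving on the trace of $\Gamma$ on $\partial M$ is no worse than the paper's one-line appeal to density.
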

\begin{rem}
\begin{enumerate}
\item If $m=2$, we have
\begin{equation*}
\begin{split}
&\int_{\partial M}\exp(f)\left(\rin{\bar\D\psi}{\psi}+\dfrac{1}{2}\left(h+\vect{n}(f)\right)\norm{\psi}^2\right)+\dfrac12\int_{M}\exp(f)\norm{\D\psi}^2\\
=&\int_{M}\exp(f)\left(\dfrac12\Delta f+\mathcal{R}_{\psi}\right)\norm{\psi}^2+\int_M\exp(-f)\norm{P\left(\exp\left(f\right)\psi\right)}^2.
\end{split}
\end{equation*}
\item If $m>2$, let $f=(1-\tau)\log u$ (with $\tau=m/(m-2)$), then
\begin{equation*}
\begin{split}
&\int_{\partial M}u^{1-\tau}\rin{\bar\D\psi}{\psi}+\dfrac{m-1}{2}\int_{\partial M}u^{-\tau}\left(h u-\dfrac{2}{m-2}\dfrac{\partial u}{\partial\vect{n}}\right)\norm{\psi}^2+\dfrac{m-1}{m}\int_{M}u^{1-\tau}\norm{\D\psi}^2\\
=&\int_{M}u^{-\tau}\left(-\dfrac{m-1}{m-2}\Delta u+\mathcal{R}_{\psi}u\right)\norm{\psi}^2+\int_Mu^{1+\tau}\norm{P\left(u^{-\tau}\psi\right)}^2.
\end{split}
\end{equation*}
\end{enumerate}
\end{rem}
\begin{proof}We only prove this theorem for smooth setting. For general case, this can be done by density theorem. Denote the twistor operator by
\begin{equation*}
P_X\psi\coloneqq\nabla_X\psi+\dfrac1mX\cdot\D\psi,\quad\forall X\in\Gamma(TM),\psi\in\Gamma(E).
\end{equation*}
This twistor operator has the following property
\begin{equation}\label{eq:twistor}
\norm{\nabla\psi}^2=\norm{P\psi}^2+\dfrac1m\norm{\D\psi}^2.
\end{equation}
In fact, since $\trace P=0$, i.e., $e_i\cdot P(e_i)=\gamma\circ P=0$, we have that
\begin{equation*}
\begin{split}
\norm{\nabla\psi}^2=&\sum_i\norm{P_{e_i}\psi-\dfrac1me_i\cdot\D\psi}^2=\norm{P\psi}^2-\dfrac2m\sum_i\rin{P_{e_i}\psi}{e_i\cdot\D\psi}+\dfrac1m\norm{\D\psi}^2\\
=&\norm{P\psi}^2+\dfrac2m\sum_i\rin{e_i\cdot P_{e_i}\psi}{\D\psi}+\dfrac1m\norm{\D\psi}^2=\norm{P\psi}^2+\dfrac1m\norm{\D\psi}^2.
\end{split}
\end{equation*}

\par
For every smooth function $f\in C^{\infty}(M)$ and every smooth spinor $\psi\in\Gamma(E)$, by using \eqref{eq:twistor} and \eqref{eq:Weitzenbock}, we have that
\begin{equation*}
\begin{split}
&\dfrac12\Delta\left(\exp(f)\norm{\psi}^2\right)\\
=&\exp(f)\left(\dfrac12\left(\Delta f+\norm{\nabla f}^2\right)\norm{\psi}^2+\dfrac12\Delta\norm{\psi}^2+2\rin{\nabla_{\nabla f}\psi}{\psi}\right)\\
=&\exp(f)\left(\dfrac12\left(\Delta f+\norm{\nabla f}^2\right)\norm{\psi}^2+2\rin{\nabla_{\nabla f}\psi}{\psi}\right)+\exp(f)\left(\norm{\nabla\psi}^2-\rin{\D^2\psi}{\psi}+\rin{\mathcal{R}\psi}{\psi}\right)\\
=&\exp(f)\left(\dfrac12\left(\Delta f+\norm{\nabla f}^2\right)\norm{\psi}^2+2\rin{P_{\nabla f}\psi}{\psi}-\dfrac2m\rin{\nabla f\cdot\D\psi}{\psi}\right)\\
&+\exp(f)\left(\norm{P\psi}^2+\dfrac1m\norm{\D\psi}^2-\rin{\D^2\psi}{\psi}+\rin{\mathcal{R}\psi}{\psi}\right)\\
=&\exp(f)\left(\dfrac12\Delta f\norm{\psi}^2+\rin{\mathcal{R}\psi}{\psi}+\dfrac{2-m}{2m}\norm{\nabla f}^2\norm{\psi}^2\right)-\rin{\D\left(\exp(f)\D\psi\right)}{\psi}\\
&+\dfrac1m\exp(f)\norm{\D\psi}^2+\dfrac{m-2}{m}\exp(f)\rin{\nabla f\cdot\D\psi}{\psi}+\exp(-f)\norm{P\left(\exp(f)\psi\right)}^2.
\end{split}
\end{equation*}
The last identity follows from the following two identities
\begin{gather*}
\begin{split}
\norm{P\left(\exp\left(f\right)\psi\right)}^2=&\exp(2f)\norm{P_{e_i}\psi+e_i(f)\psi+\dfrac1me_i\cdot\dif f\cdot\psi}^2\\
=&\exp(2f)\left(\norm{P\psi}^2+\norm{e_i(f)\psi+\dfrac1me_i\cdot\dif f\cdot\psi}^2+2\rin{P_{\nabla f}\psi}{\psi}\right)\\
=&\exp(2f)\left(\norm{P\psi}^2+\dfrac{m-1}{m}\norm{\psi}^2\norm{\nabla f}^2+2\rin{P_{\nabla f}\psi}{\psi}\right),
\end{split}
\intertext{and}
\rin{\D\left(\exp(f)\D\psi\right)}{\psi}=\exp(f)\left(\rin{\D^2\psi}{\psi}+\rin{\nabla f\cdot\D\psi}{\psi}\right).
\end{gather*}
Integrating by parts, we get that
\begin{equation*}
\begin{split}
&\int_{\partial M}\exp(f)\left(\rin{\bar\D\psi}{\psi}+\dfrac12\left((m-1)h+\vect{n}(f)\right)\norm{\psi}^2\right)\\
=&\int_{M}\exp(f)\left(\dfrac12\Delta f\norm{\psi}^2+\rin{\mathcal{R}\psi}{\psi}+\dfrac{2-m}{2m}\norm{\nabla f}^2\norm{\psi}^2\right)+\int_M\exp(-f)\norm{P\left(\exp(f)\psi\right)}^2\\
&-\dfrac{m-1}{m}\int_M\exp(f)\left(\norm{\D\psi}^2+\dfrac{m-2}{m-1}\rin{\D\psi}{\nabla f\cdot\psi}\right).
\end{split}
\end{equation*}
By using the following identity
\begin{equation*}
\begin{split}
&\norm{\D\left(\exp\left(\dfrac{m-2}{2(m-1)}f\right)\psi\right)}^2=\exp\left(\dfrac{m-2}{m-1}f\right)\norm{\D\psi+\dfrac{m-2}{2(m-1)}\nabla f\cdot\psi}^2\\
=&\exp\left(\dfrac{m-2}{m-1}f\right)\left(\norm{\D\psi}^2+\dfrac{m-2}{m-1}\rin{\D\psi}{\nabla f\cdot\psi}+\dfrac{(m-2)^2}{4(m-1)^2}\norm{\nabla f}^2\norm{\psi}^2\right)
\end{split}
\end{equation*}
we get that
\begin{equation*}
\begin{split}
&\int_{\partial M}\exp(f)\left(\rin{\bar\D\psi}{\psi}+\dfrac12\left((m-1)h+\vect{n}(f)\right)\norm{\psi}^2\right)\\
=&\int_{M}\exp(f)\left(\dfrac12\Delta f\norm{\psi}^2+\rin{\mathcal{R}\psi}{\psi}+\dfrac{2-m}{4(m-1)}\norm{\nabla f}^2\norm{\psi}^2\right)+\int_M\exp(-f)\norm{P\left(\exp(f)\psi\right)}^2\\
&-\dfrac{m-1}{m}\int_M\exp\left(\dfrac{1}{m-1}f\right)\norm{\D\left(\exp\left(\dfrac{m-2}{2(m-1)}f\right)\psi\right)}^2.
\end{split}
\end{equation*}
Set $g=f/(m-1)$ and $\sigma=\exp\left((m-2)f/(2m-2)\right)\psi$, then we have
\begin{equation*}
\begin{split}
&\int_{\partial M}\exp(g)\left(\rin{\bar\D\sigma}{\sigma}+\dfrac{m-1}{2}\left(h+\vect{n}(g)\right)\norm{\sigma}^2\right)\\
=&\int_{M}\exp(g)\left(\dfrac{m-1}{2}\Delta g-\dfrac{(m-1)(m-2)}{4}\norm{\nabla g}^2+\mathcal{R}_{\sigma}\right)\norm{\sigma}^2\\
&+\int_M\exp((1-m)g)\norm{P\left(\exp\left(\dfrac{m}{2}g\right)\sigma\right)}^2-\dfrac{m-1}{m}\int_M\exp\left(g\right)\norm{\D\sigma}^2.
\end{split}
\end{equation*}
\end{proof}
It is well known that the curvature operator $\mathcal{R}$ of $E$ can be calculated as
\begin{equation*}
\mathcal{R}=\mathcal{R}_0+\dif\Gamma+[\omega_0,\Gamma]+[\Gamma,\omega_0]+[\Gamma,\Gamma]
\end{equation*}
where $\omega_0$ is the associated connection $1$-form of $\nabla_0$ (c.f. \cite{Lawson1989spin}). In particular, $\norm{\mathcal{R}}\coloneqq\norm{\mathcal{R}}_{op}\in L^{p^*}(M)$, where the operator norm $\mathcal{R}_{op}$ at each point is defined by
\begin{equation*}
\norm{\mathcal{R}}_{op}=\sup_{\psi\neq0}\dfrac{\norm{R\psi}}{\norm{\psi}}.
\end{equation*}
 We  need the following Lemma.
\begin{lem}\label{lem:reilly-f}Suppose $M$ is a Riemann surface with boundary and $\Gamma\in\mathfrak{D}^{p^*}(E)$. There is a function $f\in W^{2,p}(M)$ for all $1<p<p^*$  satisfying
\begin{equation*}
\begin{cases}
\dfrac12\Delta f-\norm{\mathcal{R}}=0,&\text{in}\ M;\\
f=0,&\text{on}\ \partial M.
\end{cases}
\end{equation*}
\end{lem}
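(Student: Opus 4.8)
The plan is to recognize this as the Dirichlet problem $\Delta f=2\norm{\mathcal{R}}$ in $M$, $f=0$ on $\partial M$, for the ordinary Laplace--Beltrami operator, so that the only genuine point is to carry the low regularity of $\Gamma$ through. First I would check that the source $\rho\coloneqq 2\norm{\mathcal{R}}_{op}$ lies in $L^{p^*}(M)$: from the decomposition $\mathcal{R}=\mathcal{R}_0+\dif\Gamma+[\omega_0,\Gamma]+[\Gamma,\omega_0]+[\Gamma,\Gamma]$ recalled above, the boundedness of the smooth pieces $\mathcal{R}_0,\omega_0$ on the compact manifold $M$, the inclusions $\Gamma\in L^{2p^*}(M)$ and $\dif\Gamma\in L^{p^*}(M)$ coming from $\Gamma\in\mathfrak{D}^{p^*}(E)$, and H\"older's inequality on the finite-measure manifold $M$ (which gives $L^{2p^*}(M)\hookrightarrow L^{p^*}(M)$ and $L^{2p^*}(M)\cdot L^{2p^*}(M)\hookrightarrow L^{p^*}(M)$), one obtains $\rho\in L^{p^*}(M)$ --- as is in fact already asserted in the text. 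This is the only place the non-smoothness of $\Gamma$ enters, and it dictates that one must use $L^p$-type (Calder\'on--Zygmund / Agmon--Douglis--Nirenberg) elliptic theory rather than Schauder theory.

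Next I would produce a weak solution by the direct method. Since $m=2$, one has $H^1_0(M)\hookrightarrow L^q(M)$ for every $q<\infty$; because $p^*>1$, the conjugate exponent $(p^*)'$ is finite, so for $u\in H^1_0(M)$ the linear term $\int_M\rho u$ is finite and, via the Poincar\'e inequality (available precisely because $\partial M\neq\emptyset$), bounded by $c\norm{\rho}_{L^{p^*}(M)}\norm{\nabla u}_{L^2(M)}$. Hence $\mathcal{F}(u)\coloneqq\tfrac12\int_M\norm{\nabla u}^2+\int_M\rho u$ is coercive and weakly lower semicontinuous on the reflexive space $H^1_0(M)$, so it attains a minimizer $f\in H^1_0(M)$, whose Euler--Lagrange equation is exactly $\Delta f=\rho$ in the weak sense with $f=0$ on $\partial M$. (Equivalently, one could simply quote that the Dirichlet Laplacian $\Delta\colon W^{2,p}(M)\cap W^{1,p}_0(M)\to L^p(M)$ is an isomorphism for all $1<p<\infty$ --- injectivity by integration by parts, surjectivity by the Fredholm theory for elliptic boundary value problems.)

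Finally I would upgrade the regularity: elliptic regularity up to the boundary for the Dirichlet problem with right-hand side in $L^{p^*}(M)$ --- obtained from interior and boundary Calder\'on--Zygmund estimates in coordinate charts and a finite cover of $\bar M$ --- promotes the weak solution to $f\in W^{2,p^*}(M)$, and since $M$ is compact, $W^{2,p^*}(M)\hookrightarrow W^{2,p}(M)$ for every $1<p\le p^*$, in particular for every $1<p<p^*$, which is the claim (one even gets $f\in C^0(\bar M)$, since $W^{2,p^*}(M)\hookrightarrow C^0(\bar M)$ when $2p^*>2$, which is convenient when this lemma is fed into \autoref{thm:reilly-w}). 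I do not expect a real obstacle here; the only delicate point is the first step --- equivalently, the hypothesis $p^*>1$ is exactly the borderline that makes $\rho u$ integrable against $H^1_0$ functions and the $L^{p^*}$ elliptic estimates applicable.
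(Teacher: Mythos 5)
Your proposal is correct, and it is precisely the argument the paper implicitly relies on: the paper states this lemma without giving a proof, treating it as a consequence of standard $L^p$ elliptic theory for the Dirichlet Laplacian once one has checked (as the text does immediately after the Weitzenb\"ock computation) that $\norm{\mathcal{R}}_{op}\in L^{p^*}(M)$. Your argument in fact yields the slightly stronger conclusion $f\in W^{2,p^*}(M)$, from which the stated $f\in W^{2,p}(M)$ for all $1<p<p^*$ follows by the embedding on the compact manifold; the only genuine role of the hypothesis $p^*>1$ is, as you identify, to place the right-hand side $2\norm{\mathcal{R}}$ in $L^{p^*}$ and to make the variational / Calder\'on--Zygmund machinery applicable.
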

 Now we can state the following $L^2$-estimate in dimension $m=2$.
\begin{theorem}[$L^2$-estimate]\label{thm:L2-estimate}Let $M,E$ be as in \autoref{thm:dirac}. Suppose that $m=2$ and $\Gamma\in\mathfrak{D}^{p^*}(E)$. Then there exists a function $f\in W^{2,p}(M)$ for all $1<p<p^*$, such that for every spinor $\psi\in H^1(E)$
\begin{equation}\label{eq:l2}
\begin{split}
&\int_{\partial M}\exp(f)\left(\rin{\bar\D\psi}{\psi}+\dfrac{1}{2}\left(h+\vect{n}(f)\right)\norm{\psi}^2\right)+\dfrac{1}{2}\int_{M}\exp(f)\norm{\D\psi}^2\\
\geq&\int_{M}\exp(-f)\norm{P\left(\exp\left(f\right)\psi\right)}^2.
\end{split}
\end{equation}
\end{theorem}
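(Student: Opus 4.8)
The plan is to read \eqref{eq:l2} directly off the $m=2$ case of the Weighted Reilly formula in \autoref{thm:reilly-w}, taking for $f$ precisely the function produced by \autoref{lem:reilly-f}. So I would first fix $f\in W^{2,p}(M)$ (for every $1<p<p^*$) solving $\tfrac12\Delta f=\norm{\mathcal{R}}$ in $M$ with $f=0$ on $\partial M$; recall that $\norm{\mathcal{R}}=\norm{\mathcal{R}}_{op}\in L^{p^*}(M)$ by the expansion $\mathcal{R}=\mathcal{R}_0+\dif\Gamma+[\omega_0,\Gamma]+[\Gamma,\omega_0]+[\Gamma,\Gamma]$ recorded just before \autoref{lem:reilly-f}, so that lemma applies. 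Since $M$ is a surface, $W^{2,p}(M)\hookrightarrow C^0(M)$ for every $p>1$, so $\exp(\pm f)$ is bounded above and below; this is what makes every integral appearing in \eqref{eq:l2} finite for $\psi\in H^1(E)$, the boundary term being understood via the self-adjoint pairing of $\bar\D$ on the closed manifold $\partial M$ (using $\bar\D=\vect{n}\cdot\D+\nabla_{\vect{n}}-\tfrac{m-1}{2}h$ and $\psi\vert_{\partial M}\in H^{1/2}(\partial M)$).

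Next I would use the pointwise bound $\mathcal{R}_\psi\geq-\norm{\mathcal{R}}$, which is just Cauchy--Schwarz: $\abs{\rin{\mathcal{R}\psi}{\psi}}\leq\norm{\mathcal{R}}_{op}\norm{\psi}^2$. Combined with $\tfrac12\Delta f=\norm{\mathcal{R}}$ this gives
\begin{equation*}
\left(\tfrac12\Delta f+\mathcal{R}_\psi\right)\norm{\psi}^2=\left(\norm{\mathcal{R}}+\mathcal{R}_\psi\right)\norm{\psi}^2\geq0
\end{equation*}
almost everywhere on $M$, hence $\int_M\exp(f)\left(\tfrac12\Delta f+\mathcal{R}_\psi\right)\norm{\psi}^2\geq0$. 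Substituting into the $m=2$ identity of \autoref{thm:reilly-w} and discarding this nonnegative term yields exactly \eqref{eq:l2}.

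The only step that needs care is that \autoref{thm:reilly-w} was proved for smooth $f$ and $\psi\in\Gamma(E)$, whereas here $f$ is only $W^{2,p}$ and $\psi\in H^1(E)$. I would close this gap by approximation: choose smooth $\Gamma_k\to\Gamma$ in $\mathfrak{D}^{p^*}(E)$, so that the curvature operators satisfy $\mathcal{R}_k\to\mathcal{R}$ in $L^{p^*}(M)$ and, by elliptic estimates for the Poisson equation, the corresponding functions $f_k$ from \autoref{lem:reilly-f} converge to $f$ in $W^{2,p}(M)$ (hence uniformly) for every $p<p^*$; simultaneously approximate $\psi$ by smooth spinors in $H^1(E)$. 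Because in dimension two $W^{2,p}\hookrightarrow C^0$ and $H^{1/2}(\partial M)\hookrightarrow L^q(\partial M)$ for all $q<\infty$, all quantities quadratic in $\psi$ (namely $\norm{\D\psi}^2$, $\norm{P(\exp(f)\psi)}^2$, and the boundary integrand), as well as the zeroth-order Clifford terms produced by $\slashed\Gamma\in L^{2p^*}$ with $2p^*>2$, converge, and the Reilly identity — hence, after the previous paragraph, the inequality \eqref{eq:l2} — passes to the limit.

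The main obstacle is therefore not \eqref{eq:l2} itself, which is an immediate consequence of \autoref{thm:reilly-w} and \autoref{lem:reilly-f} via a single pointwise estimate, but the functional-analytic bookkeeping needed to legitimize the Reilly identity for non-smooth $f$ and $\psi$ and to give the boundary term a meaning; this is precisely where the Sobolev embedding $W^{2,p}\hookrightarrow C^0$ in dimension two and the hypothesis $p^*>1$ (so that $2p^*>2$) are used.
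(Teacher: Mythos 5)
Your proposal is correct and takes essentially the same route as the paper: fix $f$ from \autoref{lem:reilly-f}, specialize the weighted Reilly formula of \autoref{thm:reilly-w} to $m=2$, use $\mathcal{R}_\psi\geq-\norm{\mathcal{R}}$ together with $\tfrac12\Delta f=\norm{\mathcal{R}}$ to discard the nonnegative term $\int_M e^f\left(\tfrac12\Delta f+\mathcal{R}_\psi\right)\norm{\psi}^2$, and extend to $\psi\in H^1(E)$ by density. Your explicit statement of the pointwise inequality $\tfrac12\Delta f+\mathcal{R}_\psi\geq0$ spells out a step the paper leaves implicit, and your simultaneous approximation of $\Gamma$, $f$, and $\psi$ is a slightly more careful formulation of the paper's density argument.
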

\begin{proof}Choose $f$ in \autoref{lem:reilly-f}. Since $f\in W^{2,p}(M)$ for all $1<p<p^*$, by using the Sobolev embedding theorem, we know that $f\in C^{2-2/p}(M)$. In particular, $f$ is continuous.  Moreover, the trace theorem implies that
\begin{equation*}
f\vert_{\partial M}\in W^{2-1/p,p}(\partial M)\end{equation*}
and hence again the Sobolev theorem
\begin{equation*}
W^{1-1/p,p}(\partial M)\subset L^q(\partial M)
\end{equation*}
implies that $\vect{n}(f)\in L^q(\partial M)$, where we assume $1<p<\min\set{2,p^*}$ without loss of generality and
\begin{equation*}
\dfrac1q=\dfrac1p-\left(1-\dfrac1p\right)=\dfrac{2-p}{p}.
\end{equation*}
Hence
\begin{equation*}
\begin{split}
\abs{\int_{\partial M}e^f\vect{n}(f)\norm{\psi}^2}\leq C\norm{\vect{n}(f)}_{L^q(\partial M)}\norm{\psi}_{L^{q'}(\partial M)}^2\leq C\norm{\psi}_{H^{1}(M)}^2.
\end{split}
\end{equation*}
Here we have used the Sobolev embedding  and the trace theorem
\begin{equation*}
H^{1/2}(\partial M)\subset L^{q'}(\partial M),
\end{equation*}
where
\begin{equation*}
\dfrac12\left(1-\dfrac{1}{q}\right)=\dfrac{1}{q'}>\dfrac{m-2}{2(m-1)}=0
\end{equation*}
which is equivalent to
\begin{equation*}
p>1.
\end{equation*}
For the remaining terms, it is easy to get that
\begin{gather*}
\abs{\int_{\partial M}e^f\rin{\bar\D\psi}{\psi}}\leq C\norm{\psi}_{H^1(M)}^2,\\
\int_Me^f\norm{\D\psi}^2\leq C\norm{\psi}^2_{H^1(M)},
\intertext{and}
\int_Me^{-f}\norm{P\left(e^{f}\psi\right)}^2\leq C\norm{\psi}^2_{H^1(M)}.
\end{gather*}
As a consequence, by using \autoref{thm:reilly-w} and \autoref{rem:3.4} we know that \eqref{eq:l2} holds for all $\psi\in H^1(E)$ by using the density theorem.
\end{proof}
\begin{cor}[Uniqueness dimension $m=2$]\label{cor:kernel}Let $M,E$ be as in \autoref{thm:dirac}. Suppose $m=2$ and $\Gamma\in\mathfrak{D}^{p^*}(E)$, then a weak solution of
\begin{equation*}
\begin{cases}
\D\psi=0,&M;\\
\mathcal{B}\psi=0,&\partial M
\end{cases}
\end{equation*}
is a trivial spinor, i.e., $\psi=0$.
\end{cor}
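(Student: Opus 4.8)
The plan is to exploit the two structural facts stressed in this section: for a harmonic spinor the homogeneous chiral boundary condition is equivalent to the zero Dirichlet condition, and the weighted Reilly estimate of \autoref{thm:L2-estimate} controls the twistor operator of $e^f\psi$ purely by boundary data. So the first thing I would do is upgrade the weak solution to $\psi\in H^1(E)$ — interior regularity is \autoref{thm:regularity_weak} (admissible since $\slashed\Gamma\in L^{2p^*}(M)$ with $2p^*>2$), while the elliptic boundary estimates of this section give regularity up to $\partial M$. Then, since $\D\psi=0$, \autoref{prop:chirality} in its non-smooth form (\autoref{rem:3.4}, valid because $\Gamma\in L^{\hat m}(M)$ when $m=2$) yields
\[
\int_{\partial M}\norm{\psi}^2=\int_{\partial M}\left(\norm{\psi}^2-2\norm{\mathcal{B}\psi}^2\right)\le 2\norm{\psi}_{L^2(E)}\norm{\D\psi}_{L^2(E)}=0 ,
\]
so $\psi=0$ on $\partial M$. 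It then remains to prove that $\D\psi=0$ in $M$ together with $\psi|_{\partial M}=0$ forces $\psi\equiv0$, for the possibly non-smooth connection $\nabla=\nabla_0+\Gamma$ with $\Gamma\in\mathfrak{D}^{p^*}(E)$.

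Next I would insert this $\psi$ into the weighted estimate \eqref{eq:l2} of \autoref{thm:L2-estimate}, with weight $f\in W^{2,p}(M)$, $1<p<p^*$. The bulk term $\tfrac12\int_M e^f\norm{\D\psi}^2$ vanishes because $\D\psi=0$, and the whole boundary integral $\int_{\partial M}e^f\bigl(\rin{\bar\D\psi}{\psi}+\tfrac12(h+\vect{n}(f))\norm{\psi}^2\bigr)$ vanishes because $\psi|_{\partial M}=0$. Hence \eqref{eq:l2} collapses to $0\ge\int_M e^{-f}\norm{P(e^f\psi)}^2\ge 0$, i.e. $\sigma\coloneqq e^f\psi$ satisfies the twistor equation $P\sigma=0$ a.e.\ on $M$; writing this out and using $\D\psi=0$ one also finds $\D\sigma=\nabla f\cdot\sigma$ and hence $\nabla_X\sigma=-\tfrac12\,X\cdot\nabla f\cdot\sigma$ for every vector field $X$.

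Finally I would close the argument by a maximum principle. From $P\sigma=0$ and the twistor identity \eqref{eq:twistor} one gets $\norm{\nabla\sigma}^2=\tfrac12\norm{\D\sigma}^2=\tfrac12\norm{\nabla f}^2\norm{\sigma}^2$; combining this with the Weitzenb\"ock formula \eqref{eq:Weitzenbock} and the defining equation $\tfrac12\Delta f=\norm{\mathcal{R}}$ of the weight (\autoref{lem:reilly-f}), a direct computation shows that the nonnegative function $v\coloneqq\norm{\sigma}^2=e^{2f}\norm{\psi}^2$ satisfies, weakly on $M$,
\[
\Delta\!\left(e^f v\right)-2\hin{\nabla(e^f v)}{\nabla f}\ge 0 .
\]
Thus $e^f v$ is a subsolution of a uniformly elliptic operator with no zeroth-order term (and drift $-2\nabla f$); since $e^f v\ge 0$ and $e^f v=0$ on the nonempty boundary $\partial M$, the maximum principle forces $e^f v\equiv0$, hence $\psi\equiv0$. (Equivalently, the relation $\nabla_X\sigma=-\tfrac12\,X\cdot\nabla f\cdot\sigma$ exhibits $\sigma$ as parallel for a modified connection, so it must vanish identically once it vanishes on $\partial M\ne\emptyset$; here the nonemptiness of the boundary is essential.)

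The step I expect to be the main obstacle is the low regularity of the weight: one only has $f\in W^{2,p}$, so $\nabla f\in L^q$ for some $q>2$ and $\norm{\mathcal{R}}\in L^{p^*}$, and one must verify that $\sigma\in C^{0,\alpha}\cap W^{1,q}$ by bootstrapping the twistor equation, that the Bochner identity and the resulting differential inequality for $v$ hold in the weak sense, and that the weak maximum principle (or, alternatively, the parallel-transport argument) is legitimate for elliptic operators with $L^q$, $q>2$, first-order coefficients. All of this stays within the machinery already developed in this section, so I do not expect it to require a new idea beyond careful bookkeeping.
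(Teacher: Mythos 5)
Your proposal has the same skeleton as the paper's proof: upgrade to $\psi\in H^1(E)$, deduce $\psi|_{\partial M}=0$ from \autoref{prop:chirality} together with \autoref{rem:3.4}, feed $\psi$ into the weighted $L^2$ estimate of \autoref{thm:L2-estimate} to conclude $P(e^f\psi)=0$, and then close. Where you diverge is in the closing step, and there you actually propose a more complicated route than the paper takes. The paper does not set up a Bochner inequality or invoke a maximum principle: it takes the pointwise twistor equation $\nabla_X\psi+X(f)\psi+\tfrac12\,X\cdot\nabla f\cdot\psi=0$, pairs it with $\psi$, uses the algebraic identity $\rin{X\cdot\nabla f\cdot\psi}{\psi}=-X(f)\norm{\psi}^2$ (which follows from Clifford anticommutation and the skew-symmetry of Clifford multiplication), and reads off $\nabla\bigl(e^f\norm{\psi}^2\bigr)=0$ directly. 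Then $e^f\norm{\psi}^2$ is a constant that vanishes on the nonempty boundary, hence $\psi\equiv0$. This is exactly the content of your parenthetical remark that $\sigma=e^f\psi$ is parallel for a modified connection — so your ``equivalently'' aside \emph{is} the paper's argument, and it is the clean way to finish: it requires only one weak derivative of $\norm{\psi}^2$, which you already have from $\psi\in H^1$ and $f\in W^{2,p}$.

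Two cautions about your primary (Bochner/maximum-principle) route. First, the specific differential inequality you write, $\Delta(e^f v)-2\hin{\nabla(e^f v)}{\nabla f}\ge 0$ with $v=\norm{\sigma}^2=e^{2f}\norm{\psi}^2$, is not what falls out: if you expand $\Delta(e^{2f}w)-2\hin{\nabla(e^{2f}w)}{\nabla f}$ with $w=e^f\norm{\psi}^2$, you pick up a cross term $2e^{2f}\hin{\nabla w}{\nabla f}$ whose sign is not controlled a priori — unless you already know $\nabla w=0$, which is the fact you were trying to establish. The correct elementary version, if you insist on a maximum principle, is that $w=e^f\norm{\psi}^2$ itself is weakly subharmonic once $P\sigma=0$ and $\tfrac12\Delta f=\norm{\mathcal{R}}$; but that inequality is derived from the same computations that already give $\nabla w=0$ outright, so the detour through the maximum principle buys nothing. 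Second, you correctly flag that the Bochner route commits you to second-order information on $\sigma$ and to legitimising the weak maximum principle for operators with $L^q$ first-order coefficients — genuine bookkeeping burdens that the paper's one-derivative argument simply sidesteps. So: the proof idea is right and essentially the paper's, but you should promote the parenthetical to the main argument and drop the maximum-principle scaffolding.
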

\begin{proof}It follows that $\psi$ is a strong solution since $\slashed\Gamma\in L^{2p^*}(M)$, i.e., $\psi\in H^1(E)$,  according to the elliptic estimates.
By using \autoref{prop:chirality} and \autoref{rem:3.4}, we have $\psi=0$ on the boundary. \autoref{thm:L2-estimate} then implies that $P(e^{f}\psi)=0$ in $M$. That is, for every tangent vector field $X$ on $M$, we have
\begin{equation}\label{eq:twistor-w}
\nabla_X\psi+X(f)\psi+\dfrac12X\cdot\nabla f\cdot\psi=0
\end{equation}
in the weak sense. Notice that
\begin{equation*}
\rin{X\cdot\nabla f\cdot\psi}{\psi}=\rin{\psi}{\nabla f\cdot X\cdot\psi}=-\rin{\psi}{X\cdot\nabla f\cdot\psi}-2X(f)\norm{\psi}^2.
\end{equation*}
As a consequence,
\begin{equation*}
\rin{X\cdot\nabla f\cdot\psi}{\psi}=-X(f)\norm{\psi}^2.
\end{equation*}
Therefore, it follows from \eqref{eq:twistor-w} that
\begin{equation*}
\rin{\nabla_X\psi}{\psi}+\dfrac{1}{2}X(f)\norm{\psi}^2=0
\end{equation*}
which means that
\begin{equation*}
\nabla\left(e^{f}\norm{\psi}^2\right)=0
\end{equation*}
in $M$, i.e., $e^{f}\norm{\psi}^2$ is a constant in $M$. Remembering that we have proved that $\psi=0$ along the boundary, we then get that $\psi=0$ in the whole manifold $M$.
\end{proof}
\par
For higher dimension, first we have the following uniqueness theorem.
\begin{theorem}[Uniqueness for small perturbation]Let $M,E$ be as in \autoref{thm:dirac} and $m>2$.
There is a constant $\varepsilon>0$ such that if $\norm{\mathcal{R}}_{L^{m/2}}<\varepsilon$,
then there is no nontrivial solution of the following boundary value problem
\begin{equation*}
\begin{cases}
\D\psi=0,&M;\\
\mathcal{B}\psi=0,&\partial M
\end{cases}
\end{equation*}
\end{theorem}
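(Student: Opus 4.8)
The plan is to run a Bochner--Weitzenb\"ock argument on the homogeneous boundary value problem. The first step is to record that for $m>2$ the hypothesis $\Gamma\in\mathfrak{D}^{p^*}(E)$ with $p^*\ge(3m-2)/4$ gives $\slashed\Gamma\in L^{2p^*}(M)\subset L^{m}(M)$ (since $2p^*\ge(3m-2)/2\ge m$) and $\mathcal{R}\in L^{p^*}(M)\subset L^{m/2}(M)$ (since $p^*\ge(3m-2)/4\ge m/2$), so $\norm{\mathcal{R}}_{L^{m/2}}$ is finite and the smallness hypothesis is meaningful. Using \autoref{thm:regularity_weak} together with the elliptic theory of this section (\autoref{thm:fredholm}, \autoref{thm:Lp-estimate}, with a short bootstrap that improves the exponent precisely because $2p^*>m$), a solution $\psi$ of $\D\psi=0$, $\mathcal{B}\psi=0$ lies in $H^1(E)$. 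Then, applying \autoref{prop:chirality} in its non-smooth version \autoref{rem:3.4} (legitimate because $\Gamma\in L^{\hat m}(M)=L^m(M)$), the equation $\D\psi=0$ forces $\int_{\partial M}\norm{\psi}^2=2\int_{\partial M}\norm{\mathcal{B}\psi}^2=0$, so $\psi$ has vanishing trace on $\partial M$; hence $\psi\in H^1_0(E)$, the closure of $\Gamma_0(E)$ in $H^1(E)$.

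With $\psi\in H^1_0(E)$, the next step is to integrate the Weitzenb\"ock formula \eqref{eq:Weitzenbock} against $\psi$; equivalently, one specializes the weighted Reilly formula \autoref{thm:reilly-w} to $f\equiv0$ and uses the twistor identity \eqref{eq:twistor}, which for smooth $\psi$ gives
\[
\int_M\norm{\D\psi}^2+\int_{\partial M}\left(\rin{\bar\D\psi}{\psi}+\tfrac{m-1}{2}h\norm{\psi}^2\right)=\int_M\norm{\nabla\psi}^2+\int_M\mathcal{R}_{\psi}\norm{\psi}^2 .
\]
Since $\Gamma_0(E)$ is dense in $H^1_0(E)$ and each term above is continuous on $H^1(E)$ --- this is exactly where $\slashed\Gamma\in L^{2p^*}$ with $2p^*\ge m$, $\mathcal{R}\in L^{p^*}$ with $p^*\ge m/2$, and the Sobolev embedding $H^1(E)\hookrightarrow L^{2m/(m-2)}(E)$ enter --- the identity extends to all $\psi\in H^1_0(E)$ with the boundary integral vanishing. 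Plugging in $\D\psi=0$, this yields $\int_M\norm{\nabla\psi}^2=-\int_M\mathcal{R}_{\psi}\norm{\psi}^2\le\int_M\norm{\mathcal{R}}_{op}\norm{\psi}^2$.

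To close the estimate I would apply H\"older and Sobolev: since $\tfrac{2}{m}+\tfrac{m-2}{m}=1$,
\[
\int_M\norm{\mathcal{R}}_{op}\norm{\psi}^2\le\norm{\mathcal{R}}_{L^{m/2}(M)}\,\norm{\psi}_{L^{2m/(m-2)}(M)}^2\le C_S^2\,\norm{\mathcal{R}}_{L^{m/2}(M)}\int_M\norm{\nabla\psi}^2 ,
\]
where $C_S=C_S(M,g)$ is the Sobolev constant of $H^1_0(E)\hookrightarrow L^{2m/(m-2)}(E)$, depending on $M$ only and not on $\Gamma$. Combining, $\int_M\norm{\nabla\psi}^2\le C_S^2\norm{\mathcal{R}}_{L^{m/2}}\int_M\norm{\nabla\psi}^2$, so with $\varepsilon\coloneqq C_S^{-2}$ the condition $\norm{\mathcal{R}}_{L^{m/2}}<\varepsilon$ forces $\nabla\psi\equiv0$, whence $\psi\equiv0$ by the Poincar\'e inequality for $H^1_0(E)$ (equivalently, $\norm{\psi}$ is constant on each component since $\nabla$ is metric, and it vanishes on the non-empty boundary --- which is where the non-emptiness of $\partial M$ is used).

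The main obstacle I foresee is making the integrated Weitzenb\"ock identity rigorous for $\psi\in H^1_0(E)$ when $\nabla=\nabla_0+\Gamma$ is non-smooth: one must verify that $\int_M\norm{\nabla\psi}^2$, $\int_M\norm{\D_0\psi+\slashed\Gamma\psi}^2$ and $\int_M\mathcal{R}_{\psi}\norm{\psi}^2$ are all continuous functionals on $H^1(E)$ --- exactly the role of the thresholds $2p^*\ge m$ and $p^*\ge m/2$ --- and that the vanishing-trace condition furnished by \autoref{prop:chirality} indeed places $\psi$ in the $H^1$-closure of $\Gamma_0(E)$. The rest is soft Bochner bookkeeping, and, importantly, the resulting $\varepsilon$ depends only on the Sobolev constant of $M$ and not on $\Gamma$, which is the feature needed for the applications later in the paper.
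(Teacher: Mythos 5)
Your proposal is correct and follows essentially the same route as the paper: \autoref{prop:chirality} (via \autoref{rem:3.4}) forces $\psi|_{\partial M}=0$, the Weitzenb\"ock/Bochner formula with $f\equiv 0$ gives $\int_M\norm{\nabla\psi}^2=-\int_M\mathcal{R}_\psi\norm{\psi}^2$, and H\"older plus Poincar\'e--Sobolev close the estimate. You are also right that the threshold should involve the \emph{square} of the Sobolev constant (the paper's displayed $C_{PS}\varepsilon$ is a slip for $C_{PS}^2\varepsilon$), and your careful justification of the density/trace step for the non-smooth connection is a welcome amplification of what the paper leaves implicit.
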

\begin{proof}The proof is a direct consequence of the Bochner formula, the Poincar\'e-Sobolev inequality, \autoref{prop:chirality} and \autoref{rem:3.4}.
First, according to \autoref{prop:chirality}, we know that $\psi\vert_{\partial M}=0$, and then the Poincar\'e-Sobolev inequality yields
\begin{equation*}
\norm{\psi}_{L^{2m/(m-2)}(M)}\leq C_{PS}\norm{\nabla\psi}_{L^2(M)}.
\end{equation*}
Second, the classical Bochner formula (or c.f. \autoref{thm:reilly-w} with weight function $f=0$) says
\begin{align*}
0=&\int_M\rin{\mathcal{R}\psi}{\psi}+\int_M\norm{\nabla\psi}^2.
\end{align*}
Now applying the H\"older  and Poincar\'e-Sobolev inequalities, we get
\begin{align*}
0\geq&\norm{\nabla\psi}_{L^2(M)}^2-\norm{\mathcal{R}}_{L^{m/2}(M)}\norm{\psi}_{L^{2m/(m-2)}(M)}^2\\
\geq&\norm{\nabla\psi}_{L^2(M)}^2-C_{PS}\varepsilon\norm{\nabla\psi}_{L^{2}(M)}^2.
\end{align*}
Hence, if $\varepsilon<C_{PS}^{-1}$, we get $\nabla\psi=0$. Therefore, $\psi\equiv 0$ in $M$.
\end{proof}
In the general case,  we still have  uniqueness if we require more regularity on $\Gamma$. For example,  $\Gamma\in\mathfrak{D}^{(3m-2)/4}$.
To see this, we recall the weak Unique Continuation Property (WUCP) for Dirac type operators $D+V$, where $D$ is a Dirac operator with a smooth connection
 and $V$ is a potential  (see \cite{Carleman1939sur} for $V$ continuous, \cite{Booss2002weak} for $V$ bounded, and  \cite{Jerison1986carleman}
for $V \in L^{(3m-2)/2}$).  $D+V$ is said to satisfy the WUCP, if for any solution $\psi\in H^1(M)$ of $(D+V)\psi=0$ such that $\psi$ vanishes in a nonempty open
subset of $M$, then $\psi$ vanishes in the whole connected component of $M$. The proofs of the WUCP are based on certain Carleman-type estimates. For sharper results on the structure of the zero set of solutions of
generalized Dirac equations, we refer to \cite{Bar1997nodal}.

\begin{theorem}[WUCP, see \cite{Jerison1986carleman}]Let $M,E$ be as in \autoref{thm:dirac} and $m>2$.
Let $D+V$ be a Dirac type operator, where $D$ is a Dirac operator with a smooth connection and $V \in L^{(3m-2)/2}(M)$ is a potential.
Then the WUCP holds for $D+V$ .
\end{theorem}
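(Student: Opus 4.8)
The plan is to reduce the assertion to a purely local Euclidean statement and then to invoke the Carleman inequalities of \cite{Jerison1986carleman} essentially verbatim. Since weak unique continuation is a local property, it suffices to establish the following: if $\psi\in H^1$ solves $(D+V)\psi=0$ on a coordinate ball $B\subset M$ over which $E$ is trivialized, and $\psi$ vanishes on a nonempty open subset of $B$, then $\psi$ vanishes on a (possibly smaller) concentric ball. Granting this, the theorem follows from the standard topological argument: the set $U$ of points of the connected component admitting a neighborhood on which $\psi\equiv 0$ is open by definition and, by the local statement applied at a point of $\partial U$, also closed; hence $U$ is the whole connected component.

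First I would trivialize $E$ over $B$ and pick coordinates in which the Clifford multiplication by the coordinate frame is given by constant matrices $\gamma^j$ satisfying $\gamma^i\gamma^j+\gamma^j\gamma^i=-2\delta^{ij}$. In these coordinates $D+V$ takes the form $D_0+W$, where $D_0=\gamma^j\partial_j$ is the constant-coefficient Dirac operator and $W$ gathers the smooth zeroth-order terms of $D$ (which are bounded on $B$, hence in $L^{(3m-2)/2}(B)$) together with $V\in L^{(3m-2)/2}$; thus $W\in L^{(3m-2)/2}(B)$. The problem is therefore reduced to weak unique continuation for the perturbed constant-coefficient Dirac operator $D_0+W$ with $W$ in this Lebesgue class.

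The analytic core, which is the substantial content of \cite{Jerison1986carleman}, is a family of weighted $L^{q'}$--$L^{q}$ Carleman inequalities for $D_0$: for radial weights $w_\tau$ indexed by a parameter $\tau$ ranging over the complement of a discrete set, and for $u$ supported in a sufficiently small punctured ball,
\begin{equation*}
\norm{w_\tau u}_{L^{q}}\le C\,\norm{w_\tau D_0 u}_{L^{q'}},
\end{equation*}
with $C$ \emph{independent of} $\tau$ and with the Lebesgue gain $\tfrac1{q'}-\tfrac1q=\tfrac{2}{3m-2}$ (strictly below the full first-order Sobolev gain $\tfrac1m$ when $m>2$, which is exactly why this method reaches only $L^{(3m-2)/2}$ and not $L^{m}$). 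This gain is precisely what makes the potential absorbable: by H\"older,
\begin{equation*}
\norm{w_\tau W u}_{L^{q'}}\le\norm{W}_{L^{(3m-2)/2}(\operatorname{supp}u)}\,\norm{w_\tau u}_{L^{q}}.
\end{equation*}
Hence, applying the Carleman inequality to $u=\chi\psi$ with $\chi$ a cutoff equal to $1$ near a point $x_0$ at which $\psi$ vanishes, and shrinking the support so that $C\norm{W}_{L^{(3m-2)/2}(\operatorname{supp}u)}<1$, one absorbs the term $w_\tau W u$ into the left-hand side and is left with a contribution supported on $\operatorname{supp}\nabla\chi$; letting $\tau\to\infty$ with $w_\tau$ concentrated away from $\operatorname{supp}\nabla\chi$ forces $\psi\equiv 0$ near $x_0$, which is the desired local continuation.

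None of the reductions above is difficult; the one genuine obstacle is the Carleman inequality itself --- producing, for the first-order operator $D_0$, a weight family together with a constant uniform in $\tau$ and the borderline Lebesgue exponent $(3m-2)/2$. Since that is exactly what \cite{Jerison1986carleman} supplies, the proof is complete once it is quoted, the remaining ingredients (trivialization, reduction to $D_0+W$, the H\"older absorption, and the open--closed propagation) being routine.
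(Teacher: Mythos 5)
The paper does not actually prove this theorem: it is stated as a citation of Jerison's result, with no argument supplied. Your proposal is therefore a reconstruction of something the paper leaves implicit, and the reconstruction is correct in outline. The localization reduction is standard (WUCP is local, and the open-closed argument on the connected component is the usual topological finish). The trivialization of $E$ over a small ball, collecting the smooth zeroth-order part of the Dirac operator together with $V$ into a single potential $W\in L^{(3m-2)/2}$, is exactly what makes Jerison's Euclidean Carleman inequality applicable. The arithmetic on the Lebesgue gain is right: for H\"older absorption of a potential in $L^{(3m-2)/2}$ one needs $\tfrac1{q'}-\tfrac1q=\tfrac{2}{3m-2}$, which for $m>2$ is strictly below the Sobolev gain $\tfrac1m$, and this is precisely the gain Jerison's weighted $L^{q'}\to L^q$ estimate delivers with a constant uniform in $\tau$. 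The smallness of $\norm{W}_{L^{(3m-2)/2}(\operatorname{supp}u)}$ needed for absorption comes from absolute continuity of the integral on small balls, which you use but might state explicitly. The one place your phrasing is loose is the final $\tau\to\infty$ step: the weight $w_\tau$ is not literally concentrated away from $\operatorname{supp}\nabla\chi$; rather one exploits that the ratio of $w_\tau$ on $\operatorname{supp}\nabla\chi$ to $w_\tau$ on the inner region where one wants to propagate vanishing tends to zero as $\tau\to\infty$, so the cutoff error term is killed. With that caveat, this is the standard mechanism by which Jerison's Carleman estimate yields weak unique continuation for $D_0+W$, and hence for the geometric operator $D+V$.
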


Thanks to this WUCP, we can  apply some extension arguments  similar to the smooth case considered in \cite{Booss1993elliptic} to derive the uniqueness theorem.
\begin{theorem}[Uniqueness in dimension $m>2$]\label{thm:kernel2}Let $M,E$ be as in \autoref{thm:dirac} and $m>2$.
Suppose that $\Gamma\in\mathfrak{D}^{p^*}(E)$, $p^*\geq(3m-2)/4$.
Then there is no nontrivial solution of the following boundary value problem
\begin{equation*}
\begin{cases}
\D\psi=0,&M;\\
\mathcal{B}\psi=0,&\partial M
\end{cases}
\end{equation*}
\end{theorem}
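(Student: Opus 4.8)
The plan is to reduce the uniqueness to the weak unique continuation property (WUCP) stated above, by first promoting the chiral boundary condition to a genuine Dirichlet condition and then extending $\psi$ by zero across $\partial M$ to a solution of a globally defined Dirac type operator whose potential lies in $L^{(3m-2)/2}$.

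First I would upgrade the weak solution to a strong one. Since $\Gamma\in\mathfrak{D}^{p^*}(E)$ we have $\Gamma,\slashed\Gamma\in L^{2p^*}(M)$, and the hypothesis $p^*\geq(3m-2)/4$ gives $2p^*\geq(3m-2)/2\geq m$, so in particular $\slashed\Gamma\in L^m(M)$. Writing the equation as $\D_0\psi=-\slashed\Gamma\psi$ and arguing exactly as in the proof of \autoref{cor:kernel} (interior regularity by \autoref{thm:regularity_weak} together with the boundary elliptic estimate of \autoref{thm:fredholm}) yields $\psi\in H^1(E)$. Because $\Gamma\in L^m(M)$, \autoref{rem:3.4} lets me apply \autoref{prop:chirality} to the non-smooth connection $\nabla=\nabla_0+\Gamma$; combined with $\D\psi=0$ and $\mathcal{B}\psi=0$ this gives
\begin{equation*}
\int_{\partial M}\norm{\psi}^2=\abs{\int_{\partial M}\left(\norm{\psi}^2-2\norm{\mathcal{B}\psi}^2\right)}\leq 2\norm{\psi}_{L^2(E)}\norm{\D\psi}_{L^2(E)}=0,
\end{equation*}
so $\psi$ has vanishing trace on $\partial M$, i.e.\ $\psi\in H^1_0(E)$.

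Next I would carry out the extension argument, in the spirit of \cite{Booss1993elliptic}. Assuming $M$ connected (otherwise one works on each component, each of which meets $\partial M$), attach a collar $\partial M\times[0,1]$ to $M$ along $\partial M$ and extend the Dirac bundle $E$ and the smooth Dirac operator $\D_0$ smoothly across $\partial M$, producing a compact connected manifold $\widehat M$ with a Dirac bundle $\widehat E$ and a smooth Dirac operator $\widehat\D_0$ restricting to $E,\D_0$ on $M$. Let $\widetilde\psi\in L^2(\widehat E)$ be the extension of $\psi$ by $0$ and let $\widetilde V$ be the extension of $\slashed\Gamma$ by $0$; since $2p^*\geq(3m-2)/2$ one has $\widetilde V\in L^{(3m-2)/2}(\widehat M)$. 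Because $\psi\in H^1_0(E)$, the zero-extension produces no jump across $\partial M$: $\widetilde\psi\in H^1(\widehat E)$ and $\widehat\D_0\widetilde\psi$ is the zero-extension of $\D_0\psi=-\slashed\Gamma\psi=-\widetilde V\widetilde\psi$, whence $(\widehat\D_0+\widetilde V)\widetilde\psi=0$ on $\widehat M$. Since $\widetilde\psi$ vanishes on the nonempty open set $\partial M\times(0,1)$ and $\widehat M$ is connected, the WUCP (Jerison's theorem above, applicable because $\widetilde V\in L^{(3m-2)/2}$) forces $\widetilde\psi\equiv0$, hence $\psi\equiv0$ on $M$.

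I expect the main obstacle to be the globalization step: arranging a smooth extension of the Dirac structure across $\partial M$ and, more importantly, verifying that the zero-extension $\widetilde\psi$ is an honest ($L^2$, hence weak) solution of a single Dirac type operator on $\widehat M$ with no spurious boundary term on $\partial M$. This is exactly where the two hypotheses are consumed: the identity $\psi|_{\partial M}=0$ — itself a consequence of \autoref{prop:chirality}, which crucially needs the boundary to be nonempty — kills the would-be distributional term on $\partial M$, and $p^*\geq(3m-2)/4$ is precisely what places the perturbing potential $\widetilde V$ in the Lebesgue class $L^{(3m-2)/2}$ demanded by the WUCP. Once the equation is globalized, the WUCP concludes the proof at once.
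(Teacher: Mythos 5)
Your proof is correct and follows essentially the same strategy as the paper's: upgrade to $H^1$, use \autoref{prop:chirality} and \autoref{rem:3.4} to get vanishing trace, extend $\psi$ and $\slashed\Gamma$ by zero to a larger manifold (the paper uses the closed double, you use a collar) so that the zero-extension is a weak solution of a global Dirac-type equation with potential in $L^{(3m-2)/2}$, and conclude by Jerison's WUCP. The collar-versus-double choice is cosmetic; otherwise the logical skeleton is identical.
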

\begin{proof}According to \autoref{prop:chirality} and \autoref{rem:3.4}, we have $\psi=0$ on the boundary.
First, there is a closed double $\tilde M$ of $M$ and a Dirac bundle $\tilde E$ on $\tilde M$ such that $\tilde E\vert_M=E$ and
\begin{equation*}
\tilde\D_0\vert_M=\D_0,
\end{equation*}
where $\tilde\D_0$ is the associated Dirac operator of $\tilde E$ (c.f. \cite{Booss1993elliptic}).
Here we write $\D=\D_0+\slashed\Gamma$ and $\D_0$ is smooth.
Extend $\slashed\Gamma$  trivially to some $\tilde{\slashed\Gamma}$ on $\tilde M$, i.e.,
\begin{equation*}
\tilde\Gamma=\begin{cases}
\Gamma,&M;\\
0,&\tilde M\setminus M.
\end{cases}
\end{equation*}
Then the trivial extension $\tilde\psi$ of $\psi$, i.e.
\begin{equation*}
\tilde\psi=\begin{cases}
\psi,&M;\\
0,&\tilde M\setminus M,
\end{cases}
\end{equation*}
is a $H^1(\tilde M)$-solution of
\begin{equation*}
\tilde\D_0\tilde\psi+\tilde{\slashed\Gamma}\tilde\psi=0.
\end{equation*}
We need only to check that $\tilde\psi$ is a weak solution. For every smooth spinor $\varphi$ on $\tilde M$, we have
\begin{align*}
\int_{\tilde M}\hin{\tilde\psi}{\tilde\D^*_0\varphi+\tilde{\slashed\Gamma}^*\varphi}=&\int_{M}\hin{\psi}{\tilde\D^*_0\varphi+\slashed\Gamma^*\varphi}\\
=&\int_M\hin{\D_0\psi+\slashed\Gamma\psi}{\varphi}-\int_{\partial M}\hin{\sigma_{\vect{n}}(\tilde\D_0)\psi}{\varphi}\\
=&0.
\end{align*}
Now we can apply the weak UCP of $\tilde\D_0+\tilde{\slashed\Gamma}$ to show that $\tilde\psi=0$ in the whole manifold $\tilde M$. Therefore, $\psi\equiv 0$ in  $M$.
\end{proof}
Now we can state the main elliptic $L^p$-estimates.
\begin{theorem}[Main $L^p$-estimate]\label{thm:main-estimate}Let $M,E$ be as in \autoref{thm:dirac} and $m\geq2$.
Suppose that $\Gamma\in\mathfrak{D}^{p^*}(E)$. Then for $1<p<p^*$, there exists a constant $c=c(p,\Gamma)>0$ such that for any $\psi \in W^{1,p}(E)$
\begin{equation*}
\norm{\psi}_{W^{1,p}(E)}\leq c\left(\norm{\D\psi}_{L^p(E)}+\norm{\mathcal{B}\psi}_{W^{1-1/p,p}(E\vert_{\partial M})}\right).
\end{equation*}
\end{theorem}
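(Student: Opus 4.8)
The plan is to upgrade the standard elliptic boundary estimate by removing the zeroth order term $\norm{\psi}_{L^p(E)}$, using a compactness argument together with the uniqueness results established above.

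First I would record that, since $\Gamma\in\mathfrak{D}^{p^*}(E)$, one has $\slashed\Gamma\in L^{2p^*}(M)$ with $2p^*\geq\hat{m}\geq m$, and moreover $2p^*>p$ in the range $p\geq m$ (because $p<p^*$). Hence \autoref{thm:Lp-estimate} applies --- with $\mu=m$ when $p<m$ and $\mu=2p^*$ when $p\geq m$ --- and yields a constant $c_0=c_0(p,\Gamma)>0$ with
\begin{equation*}
\norm{\psi}_{W^{1,p}(E)}\leq c_0\left(\norm{\D\psi}_{L^p(E)}+\norm{\mathcal B\psi}_{W^{1-1/p,p}(E\vert_{\partial M})}+\norm{\psi}_{L^p(E)}\right)
\end{equation*}
for all $\psi\in W^{1,p}(E)$ (note $\D\psi\in L^p(E)$ is automatic here, using $\slashed\Gamma\in L^{2p^*}$, the Sobolev embedding and $2p^*\geq m$). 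So it suffices to produce a constant $c=c(p,\Gamma)>0$ with $\norm{\psi}_{L^p(E)}\leq c(\norm{\D\psi}_{L^p(E)}+\norm{\mathcal B\psi}_{W^{1-1/p,p}(E\vert_{\partial M})})$ for all such $\psi$, since then the zeroth order term can be absorbed into the left hand side.

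To get this I would argue by contradiction: assuming it fails, choose $\psi_n\in W^{1,p}(E)$ with $\norm{\psi_n}_{W^{1,p}(E)}=1$ and $\norm{\D\psi_n}_{L^p(E)}+\norm{\mathcal B\psi_n}_{W^{1-1/p,p}(E\vert_{\partial M})}\to 0$. By Rellich--Kondrachov, after passing to a subsequence, $\psi_n\rightharpoonup\psi$ weakly in $W^{1,p}(E)$ and $\psi_n\to\psi$ strongly in $L^q(E)$ for every $q<mp/(m-p)$ (every $q<\infty$ if $p\geq m$). Choosing $q$ with $1/q=1/p-1/(2p^*)$, which lies in the admissible range precisely because $2p^*>m$, Hölder's inequality gives $\slashed\Gamma\psi_n\to\slashed\Gamma\psi$ strongly in $L^p(E)$. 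Consequently $\D_0\psi_n=\D\psi_n-\slashed\Gamma\psi_n\to-\slashed\Gamma\psi$ in $L^p(E)$, while at the same time $\D_0\psi_n\rightharpoonup\D_0\psi$ weakly in $L^p(E)$ (as $\D_0$ is bounded linear from $W^{1,p}(E)$ to $L^p(E)$); hence $\D\psi=\D_0\psi+\slashed\Gamma\psi=0$. Moreover $\mathcal B\psi_n\to\mathcal B\psi$ in $L^p(E\vert_{\partial M})$ by compactness of the trace $W^{1,p}(E)\to L^p(E\vert_{\partial M})$, so $\mathcal B\psi=0$. Thus $\psi$ is a weak solution of the homogeneous problem $\D\psi=0$, $\mathcal B\psi=0$, and \autoref{cor:kernel} (if $m=2$) or \autoref{thm:kernel2} (if $m>2$) forces $\psi\equiv 0$. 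Plugging $\psi_n$ into the displayed estimate and letting $n\to\infty$ gives $1\leq c_0\bigl(0+0+\norm{\psi_n}_{L^p(E)}\bigr)\to c_0\norm{\psi}_{L^p(E)}=0$, a contradiction.

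The main obstacle is the limit passage in the rough term $\slashed\Gamma\psi_n$: one must balance the Sobolev exponent gained from $\psi_n\in W^{1,p}(E)$ against the integrability $L^{2p^*}$ of $\slashed\Gamma$, and this works exactly under $2p^*\geq\hat{m}$ (equivalently $p^*\geq(3m-2)/4$ for $m>2$, the constraint inherited from the unique continuation input used in \autoref{thm:kernel2}). Reassuringly, this is the same hypothesis under which \autoref{cor:kernel} and \autoref{thm:kernel2} were proved, so the uniqueness step needs no extra assumption; everything else --- the standard estimate, Rellich--Kondrachov, weak continuity of $\D_0$ and compactness of the trace --- is routine.
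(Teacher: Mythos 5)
Your argument is correct and follows essentially the same route as the paper: start from the preliminary $L^p$-estimate of \autoref{thm:Lp-estimate}, then use a compactness/contradiction argument whose weak limit is a solution of the homogeneous problem, and invoke the uniqueness results \autoref{cor:kernel} and \autoref{thm:kernel2} to force that limit to vanish. The paper packages the same ingredients via a closed-range/bounded-inverse argument for the operator $(\D,\mathcal B)$, whereas you derive the estimate directly by contradiction, but the key step (the second case in the paper's closed-range proof) is the same normalization-and-compactness argument you give.
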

\begin{proof}Consider the operator
\begin{equation*}
(\D,\mathcal{B}):W^{1,p}(E)\To L^p(E)\times W^{1-1/p,p}(E\vert_{\partial M}).
\end{equation*}
Since $\Gamma\in L^{2p^*}(E)$, this  is well defined. Moreover, by using \autoref{thm:Lp-estimate}, we have the following $L^p$-estimates
\begin{equation*}
\norm{\psi}_{W^{1,p}(E)}\leq c\left(\norm{\D\psi}_{L^p(E)}+\norm{\mathcal{B}\psi}_{W^{1-1/p,p}(E\vert_{\partial M})}+\norm{\psi}_{L^p(E)}\right),
\end{equation*}
where $c=c(p,\Gamma)>0$.
\par
Now one can show that the range of $(\D,\mathcal{B})$ is closed and the kernel is trivial. In fact, the kernel is obviously trivial by using \autoref{cor:kernel} (for $m=2$) and \autoref{thm:kernel2} (for $m>2$). Now we prove that the image is closed. For $\psi_n\in W^{1,p}(E)$ with
\begin{equation*}
\D\psi_n\to\varphi\in L^p(E),\quad \mathcal{B}\psi_n\to\psi_0\in W^{1-1/p,p}(E\vert_{\partial M}).
\end{equation*}
It is clear that $\mathcal{B}\psi_0=\psi_0$.
\par
 First, we assume that $\norm{\psi_n}_{L^p(E)}\leq 1$. Then the $L^p$-estimate \autoref{thm:Lp-estimate} implies that $\psi_n$ is bounded in $W^{1,p}(E)$. Hence, there exists a subsequence of $\psi_n$, denoted also by $\psi_n$, such that $\psi_n$ converges weakly to $\psi$ in $W^{1,p}(E)$ and strongly in $L^p(E)$, i.e.,
 \begin{equation*}
 \psi_n\rightharpoonup\psi\in W^{1,p}(E),\quad \psi_n\to\psi\in L^p(E).
 \end{equation*}
 Using \autoref{thm:Lp-estimate} again, $\psi_n$ is a Cauchy sequence in $W^{1,p}(E)$. As a consequence, there exists a limit of $\psi_n$ in $W^{1,p}(E)$ and this limit must be $\psi$. Hence $\varphi=\D\psi$ and $\psi_0=\mathcal{B}\psi$.
 \par
 Second, if $\psi_n$ is not bounded in $L^p(E)$, setting
\begin{equation*}
\tilde\psi_n=\dfrac{\psi_n}{\norm{\psi_n}_{L^p(E)}}\in W^{1,p}(E).
\end{equation*}
Then
\begin{equation*}
\D\tilde\psi_n\to0\in L^{p}(E),\quad\mathcal{B}\tilde\psi_n\to 0\in W^{1-1/p,p}(E\vert_{\partial M}).
\end{equation*}
 By the same arguments as above, $\tilde\psi_n$ has a limit $\tilde\psi$ in $W^{1,p}(E)$ such that $\norm{\tilde\psi}_{L^p(E)}=1,\D\tilde\psi=0$ and $\mathcal{B}\tilde\psi=0$. This is impossible since \autoref{cor:kernel} (for $m=2$) and \autoref{thm:kernel2} (for $m>2$) implies that $\tilde\psi=0$.
\par
Hence, the closed graph theorem implies that $(\D,\mathcal{B})$ is an isometry between $H^1(E)$ and the range of $(\D,\mathcal{B})$. As a consequence, we have the following estimate
\begin{equation*}
\norm{\psi}_{W^{1,p}(E)}\leq c\left(\norm{\D\psi}_{L^p(E)}+\norm{\mathcal{B}\psi}_{W^{1-1/p,p}(E\vert_{\partial M})}\right),
\end{equation*}
where $c=c(p,\Gamma)>0$.
\end{proof}
\begin{rem}If $p=2$, we can prove this theorem directly by using  \autoref{thm:Lp-estimate} and \autoref{thm:L2-estimate}. First, according to \autoref{thm:Lp-estimate}, there exists a constant $c=c(\Gamma)>0$ such that
\begin{equation*}
\norm{\psi}_{H^1(E)}\leq c\left(\norm{\D\psi}_{L^2(E)}+\norm{\mathcal{B}\psi}_{H^{1/2}(E\vert_{\partial M})}+\norm{\psi}_{L^2(E)}\right)
\end{equation*}
holds for all $\psi\in H^1(E)$. Second, \autoref{thm:L2-estimate} implies that
\begin{equation*}
\norm{\psi}_{L^2(E)}\leq c(\Gamma)\left(\norm{\D\psi}_{L^2(E)}+\norm{\mathcal{B}\psi}_{H^{1/2}(E\vert_{\partial M})}\right).
\end{equation*}
Combining these two estimates, we complete the proof.
\end{rem}
\begin{rem}\label{rem: norm-gamma}We can choose  $c=c\left(p,\norm{\slashed\Gamma}_{p^*}\right)>0$. See Remark \ref{rem:mu>2}.
\end{rem}
\par
\subsection{Existence and uniqueness for solutions of Dirac equations}
In this subsection, we shall consider the existence and uniqueness of solutions of the Dirac equation with chiral boundary conditions, to find a solution $\psi\in W^{1,p}(E)$ of the following,
\begin{equation}\label{eq:Dirac}
\begin{cases}
\D\psi=\varphi,&M;\\
\mathcal{B}\psi=\mathcal{B}\psi_0,&\partial M,
\end{cases}
\end{equation}
where $\varphi\in L^p(E),\mathcal{B}\psi_0\in W^{1-1/p,p}(E\vert_{\partial M})$.
\par
 Several  general existence theorems for this system in $H^1(E)$ have been derived  under some integral conditions, for example, see \cite{Bartnik2005boundary}, p.53, Theorem 7.3. which asserts that \eqref{eq:Dirac} is solvable in $H^1(E)$ if and only if the following integral condition holds,
\begin{equation*}
\int_{M}\hin{\varphi}{\eta}=0,\quad\forall\eta\in\ker(\D,\mathcal{B}^*).
\end{equation*}
Moreover, this solution satisfies the following $L^2$-estimate
\begin{equation*}
\norm{\psi}_{H^1(E)}\leq c\left(\norm{\varphi}_{L^2(E)}+\norm{\mathcal{B}\psi_0}_{H^{1/2}(E\vert_{\partial M})}+\norm{\psi}_{L^2(E)}\right).
\end{equation*}
But the   uniqueness may  not be true for a general first order elliptic partial differential equation with an elliptic boundary condition.
\par
 Notice that in our setting, this integral condition is always satisfied for each $\varphi\in L^2(E)$ since the kernel of $(\D,\mathcal{B}^*)$ is zero according to \autoref{cor:kernel} (for $m=2$) and \autoref{thm:kernel2} (for $m>2$). In fact, in our setting, we can state the  existence and uniqueness \autoref{thm:dirac} with the help of the main $L^p$-estimate \autoref{thm:main-estimate}.

\begin{proof}[\textbf{Proof of \autoref{thm:dirac}}]  We only need to show the existence. We can use a  method that is similar to that for  deducing the analogous theorem for second order elliptic partial differential equations with Dirichlet boundary values, see \cite{Gilbarg2001elliptic}, p.241, Theorem 9.15, for example.  For  convenience, we will give a detailed proof here.
\par
 First, we consider the case $p^*>2$ and $p=2$. The following argument is typical, see \cite{Bartnik2005boundary,Gilbarg2001elliptic} for example. Let us consider the following closed subspace of $H^1(E)$,
\begin{equation*}
H^1_{\mathcal{B}}(E)=\set{\psi\in H^1(E):\mathcal{B}\psi=0}.
\end{equation*}
\autoref{thm:main-estimate} gives the a-priori estimate
\begin{equation*}
\norm{\psi}_{H^1(E)}^2=\int_M\norm{\nabla_0\psi}^2+\norm{\psi}^2\leq C\int_{M}\norm{\D\psi}^2,\quad\forall\psi\in H^1_{\mathcal{B}}(E).
\end{equation*}
 In particular, $\int_M\norm{\D\psi}^2$ is strictly coercive on $H^1_{\mathcal{B}}(E)$, so the Lax-Milgram theorem gives $\psi\in H^1_{\mathcal{B}}(E)$ satisfying
 \begin{equation*}
 \int_{M}\hin{\varphi}{\D\eta}=\int_{M}\hin{\D\psi}{\D\eta},\quad\eta\in H^1_{\mathcal{B}}(E).
 \end{equation*}
 Denote $\Phi=\D\psi-\varphi\in L^2(E)$, then
 \begin{equation*}
 \int_M\hin{\Phi}{\D\eta}=0,\quad\forall\eta\in H^1_{\mathcal{B}}(E).
 \end{equation*}
 Therefore $\Phi$ is a weak solution of
 \begin{equation*}
 \D\Phi=0,\quad\mathcal{B}^*\Phi=0.
 \end{equation*}
 Since $\mathcal{B}^*$ is elliptic, so all the elliptic estimates stated for $\mathcal{B}$ can be stated in a similar way (see \autoref{thm:regularity_weak}). In particular, $\Phi$ is a strong solution, i.e., $\Phi\in H^1(E)$. By using the $L^2$-estimate of \autoref{thm:L2-estimate} (for $\mathcal{B}^*$), we know that $\Phi=0$. Hence $\D\psi=\varphi$ and $\mathcal{B}\psi=0$.
 \par
 In the general case, we extend $\mathcal{B}\psi_0$ to a spinor $\tilde\psi\in H^1(E)$ such that $\tilde\psi\vert_{\partial M}=\mathcal{B}\psi_0$. Setting $\hat\psi=\psi-\tilde\psi$, we then have
 \begin{equation*}
 \begin{cases}
 \D\hat\psi=\varphi-\D\tilde\psi,&M;\\
 \mathcal{B}\hat\psi=0,&\partial M.
 \end{cases}
 \end{equation*}
 The previous case shows that there is a solution $\hat\psi\in H^1(E)$. Then $\psi=\hat\psi+\tilde\psi$ is the desired solution of \eqref{eq:Dirac}.
 \par
 Second, we consider the case $1<p<p^*$. Let $\varphi_{\varepsilon}\in\Gamma(E)$ such that $\varphi_{\varepsilon}$ converges strongly to $\varphi$ in $L^p(E)$ as $\varepsilon\to0$. For each $\varepsilon>0$, let $\psi_{\varepsilon}$ be the unique solution of
 \begin{equation*}
 \D\psi_{\varepsilon}=\varphi_{\varepsilon},\quad\mathcal{B}\psi_{\varepsilon}=0.
 \end{equation*}
 The a-priori estimate of  \autoref{thm:main-estimate} says that $\psi_{\varepsilon}\in W^{1,p}(E)$ and is a Cauchy sequence in $W^{1,p}(E)$ since $\varphi_{\varepsilon}$ converges strongly to $\varphi$ in $L^p(E)$. Say $\psi_{\varepsilon}$ converges strongly to $\psi$ in $W^{1,p}(E)$, then $\D\psi=\varphi$ and $\mathcal{B}\psi=0$. In the non-homogeneous boundary case, we set $\psi=\hat\psi+\tilde\psi$ where $\tilde\psi\in W^{1,p}(E)$ is an extension of $\mathcal{B}\psi_0$ such that $\tilde\psi\vert_{\partial M}=\mathcal{B}\psi_0$ by the extension theorem. This is possible since $1<p<\infty$. Then we choose a solution $\hat\psi\in W^{1,p}(E)$ such that
 \begin{equation*}
 \begin{cases}
 \D\hat\psi=\varphi-\D\tilde\psi,&M;\\
 \mathcal{B}\hat\psi=0,&\partial M.
 \end{cases}
 \end{equation*}
 Now we get a solution of \eqref{eq:Dirac}.
\end{proof}
\par

\vskip0.2cm

\section{Dirac equations along a map}\label{sec:dirac-map}
We first consider the following system which is slightly more general than  \eqref{eq:Dirac1}:
\begin{equation}\label{eq:system}
\begin{cases}
\D\psi^A+\Omega^A_B\cdot\psi^B=\eta^A,&M;\\
\mathcal{B}\psi^A=\mathcal{B}\psi_0^A,&\partial M,
\end{cases}
\end{equation} $A=1,\cdots, q,$
here $\eta^A\in L^p(E),\mathcal{B}\psi^A_0\in W^{1-1/p,p}(E\vert_{\partial M})$ and $\Omega\in\Omega^1\left(\La{so}_n\right)$, i.e., $\Omega^A_B=-\Omega^B_A$. Under suitable conditions, we can solve this system.
\begin{theorem}\label{thm:dirac-omega}Let $M,E$ be as in \autoref{thm:dirac}. Suppose that $\Gamma\in\mathfrak{D}^{p^*}(E),\dif\Omega\in L^{p^*}(M),\Omega\in L^{2p^*}(M)$. Then for $1<p<p^*$, \eqref{eq:system} admits a unique solution. Moreover, we have the following elliptic estimate
\begin{equation*}
\norm{\psi}_{W^{1,p}(E)}\leq c\left(\norm{\eta}_{L^p(E)}+\norm{\mathcal{B}\psi_0}_{W^{1-1/p,p}(E\vert_{\partial M})}\right),
\end{equation*}
where $c=c\left(p,\norm{\Gamma}_{p^*},\norm{\Omega}_{L^{2p^*}(M)}+\norm{\dif\Omega}_{L^{p^*}(M)}\right)>0$.
\end{theorem}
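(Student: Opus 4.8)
The plan is to recast the system \eqref{eq:system} as a single Dirac equation of the type \eqref{eq:Dirac1}, but for the direct-sum Dirac bundle $E^q\coloneqq E\oplus\dots\oplus E$ ($q$ copies of $E$), and then to quote \autoref{thm:dirac} directly. The key point is that the zeroth-order term $\psi^A\mapsto\Omega^A_B\cdot\psi^B$ is, after the obvious identification, exactly the contribution of a non-smooth Dirac connection perturbation of $E^q$ that acts on the ``external'' index $A$ only, and not on the Clifford-module structure of $E$.

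Concretely, I would equip $E^q$ with the direct-sum Hermitian metric, the Clifford multiplication $\gamma^{E^q}=\gamma^E\otimes\Id_q$, and the smooth Dirac connection $\nabla_0^q=\nabla_0\oplus\dots\oplus\nabla_0$. Set $\widetilde\Gamma\coloneqq\Gamma\otimes\Id_q$, and let $\widehat\Omega\in\Omega^1(\End(E^q))$ act by $(\widehat\Omega(X)\psi)^A\coloneqq\Omega^A_B(X)\psi^B$. The facts to check are all elementary: (i) $\widetilde\Gamma$ is skew-adjoint on $E^q$ and $[\widetilde\Gamma,\gamma^{E^q}]=0$, since $\Gamma$ has these properties on each factor; (ii) $\widehat\Omega(X)$ is skew-adjoint because $\Omega^A_B=-\Omega^B_A$, and $[\widehat\Omega,\gamma^{E^q}]=0$ because $\widehat\Omega$ touches only the external index; hence $\widehat\Gamma\coloneqq\widetilde\Gamma+\widehat\Omega\in\Omega^1(\Lg{Ad}(E^q))$ commutes with $\gamma^{E^q}$, so that $\nabla_0^q+\widehat\Gamma$ is again a Dirac connection; and (iii) unwinding $\D^q=\gamma^{E^q}(e_i)\left(\nabla_0^q+\widehat\Gamma\right)_{e_i}$ shows $(\D^q\psi)^A=\D\psi^A+\Omega^A_B\cdot\psi^B$, with $\D=\D_0+\slashed\Gamma$ as before. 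Moreover the chiral operator $G$ on $E$ induces the chiral operator $G\otimes\Id_q$ on $E^q$, whose associated boundary operator is the diagonal extension of $\mathcal{B}$; therefore \eqref{eq:system} \emph{is} the problem \eqref{eq:Dirac1} for the Dirac bundle $\left(E^q,\nabla_0^q+\widehat\Gamma\right)$ with data $\eta=(\eta^A)$ and $\mathcal{B}\psi_0=(\mathcal{B}\psi_0^A)$.

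It then remains only to see that $\widehat\Gamma$ lies in the Dirac connection space $\mathfrak{D}^{p^*}(E^q)$, with its norm controlled by the quantities in the statement. Since $\dif\widetilde\Gamma=\dif\Gamma\otimes\Id_q$, the quantity $\norm{\widetilde\Gamma}_{L^{2p^*}(M)}+\norm{\dif\widetilde\Gamma}_{L^{p^*}(M)}$ is controlled by $\norm{\Gamma}_{p^*}$; and, up to a constant depending only on $q$, one has $\norm{\widehat\Omega}_{L^{2p^*}(M)}\leq c\,\norm{\Omega}_{L^{2p^*}(M)}$ and $\norm{\dif\widehat\Omega}_{L^{p^*}(M)}\leq c\,\norm{\dif\Omega}_{L^{p^*}(M)}$, so that $\widehat\Gamma$ is an admissible element of $\mathfrak{D}^{p^*}(E^q)$ (approximate $\Gamma$ and $\Omega$ by smooth data preserving skew-symmetry) with
\begin{equation*}
\norm{\widehat\Gamma}_{p^*}\leq c\left(\norm{\Gamma}_{p^*}+\norm{\Omega}_{L^{2p^*}(M)}+\norm{\dif\Omega}_{L^{p^*}(M)}\right).
\end{equation*}
Applying \autoref{thm:dirac} on $E^q$, together with Remark \ref{rem: norm-gamma} for the dependence of the constant, produces for each $1<p<p^*$ a unique $\psi\in W^{1,p}(E^q)=W^{1,p}(E)^q$ solving \eqref{eq:system}, together with
\begin{equation*}
\norm{\psi}_{W^{1,p}(E)}\leq c\left(\norm{\eta}_{L^p(E)}+\norm{\mathcal{B}\psi_0}_{W^{1-1/p,p}(E\vert_{\partial M})}\right),
\end{equation*}
where $c$ depends only on $p$, $\norm{\Gamma}_{p^*}$ and $\norm{\Omega}_{L^{2p^*}(M)}+\norm{\dif\Omega}_{L^{p^*}(M)}$, as required.

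I expect the only genuine point to watch is step (ii): that $\widehat\Omega$ really defines an admissible non-smooth Dirac connection perturbation of $E^q$. Skew-adjointness is precisely the hypothesis $\Omega\in\Omega^1(\La{so}_n)$, and commutation with Clifford multiplication holds because $\widehat\Omega$ acts on the factor index, not on $E$ itself; the regularity bookkeeping is then just the observation that tensoring with $\Id_q$ and relabelling indices do not affect the relevant $L^p$ norms. Once these points are settled, no analytic ingredient beyond \autoref{thm:dirac} is needed, and in particular the unique continuation, the weighted Reilly formula, and the $L^p$-theory of \autoref{sec:dirac} are used only through that theorem.
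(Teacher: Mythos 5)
Your proposal is correct and takes essentially the same route as the paper: both pass to the direct-sum Dirac bundle $E^q$ with diagonal Clifford multiplication and connection, verify that $\Omega$ gives rise to an admissible element of $\mathfrak{D}^{p^*}(E^q)$ commuting with the Clifford structure, extend the chiral operator diagonally, and then quote \autoref{thm:dirac} together with Remark \ref{rem: norm-gamma} for the uniformity of the constant. Your treatment is, if anything, slightly more explicit than the paper's about the $L^{p^*}$ norm bookkeeping and the density argument showing $\widehat\Gamma$ lies in $\mathfrak{D}^{p^*}(E^q)$.
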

\begin{proof}We construct a new Dirac bundle and a new chirality operator, and then we can apply the existence and uniqueness for the usual Dirac equation to prove this theorem.
\par
Let $\tilde E=\oplus^nE=\underbrace{E\oplus\dotsc\oplus E}_{\text{n times}}$. Then $\tilde E$ becomes a Dirac bundle  as a Whitney sum bundle. The Clifford multiplication is defined as
\begin{equation*}
\tilde\gamma(X)(\psi^A)\coloneqq(X\cdot\psi^A),\forall X\in TM,
\end{equation*}
i.e., $\tilde\gamma=\gamma^E\Id$. Here $\gamma^E(X)\psi^A\coloneqq X\cdot\psi^A$ stands for the Clifford multiplication on $E$.
Then the associated $\tilde\Gamma=\Gamma\Id$, i.e.,
\begin{equation*}
\tilde\Gamma(X)(\psi^A)\coloneqq(\Gamma(X)\psi^A).
\end{equation*}
\par
Define $\Gamma'$ by
\begin{equation*}
\Gamma'(X)(\psi^A)\coloneqq(\Omega^A_B(X)\psi^B).
\end{equation*}
It is clear that $\Gamma'\in\Omega^1(\Lg{Ad}(\tilde E))$. We need only to check that $[\Gamma',\tilde\gamma]=0$ in order to prove that $\Gamma'\in\mathfrak{D}^{p^*}(\tilde E)$. In fact,
\begin{equation*}
[\Gamma',\tilde\gamma](X,Y)(\psi^A)=\left(\Omega^A_B(X)\left(Y\cdot\psi^B\right)-Y\cdot\Omega^A_B(X)\psi^B\right)=0.
\end{equation*}
Therefore, we have constructed a new Dirac bundle $\tilde E$ with the Dirac operator $\tilde D$ defined by
\begin{equation*}
\tilde\D(\psi^A)=(\D\psi^A+\Omega^A_B\cdot\psi^B).
\end{equation*}
It is obvious that $\dif(\tilde\Gamma+\Gamma')\in L^{p^*}(M),\tilde\Gamma+\Gamma'\in L^{2p^*}(M)$.
\par
Introduce an operator $\tilde G\in\hom(\tilde E)$,
\begin{equation*}
\tilde G(\psi^A)\coloneqq(G\psi^A).
\end{equation*}
It is clear that
\begin{equation*}
\tilde G^2=\Id,\quad \tilde G^*=\tilde G,\quad \tilde G\tilde\gamma(X)=-\tilde\gamma(X)\tilde G,\quad\forall X\in TM.
\end{equation*}
Moreover, $\tilde\nabla\tilde G=0$. In fact,
\begin{equation*}
\tilde\nabla_X\tilde G(\psi^A)=\left(\nabla_XG\psi^A+\Omega^A_B(X)G\psi^B\right)=\left(G\nabla_X\psi^A+G\Omega^A_B(X)\psi^B\right)=\tilde G\tilde\nabla_X(\psi^A).
\end{equation*}
In particular, $\tilde G$ is a chirality operator on $\tilde E$. Therefore, the associated chirality boundary operator
\begin{equation*}
\mathcal{\tilde B}(\psi^A)\coloneqq\left(\mathcal{B}\psi^A\right).
\end{equation*}
 Then we can use the theory of the Dirac equation \autoref{thm:dirac} to finish the proof of this theorem.
\end{proof}
\par
Now we suppose that $M$ is a Riemannian spin manifold with boundary $\partial M$ and give the
\begin{proof}[\textbf{Proof of  \autoref{thm:main-dirac}}]Embedding $N$ into some Euclidian space, then as shown in \autoref{sec:dh}, we can rewrite this boundary value problem for the Dirac equation as
\begin{equation*}
\begin{cases}
\dirac\Psi^A+\Omega^A_B\cdot\Psi^B=\eta^A,&M;\\
\mathcal{B}\Psi^A=\mathcal{B}\psi^A,&\partial M,
\end{cases}
\end{equation*}
where
\begin{equation*}
\Omega^A_B=[\nu(\Phi),\dif\nu(\Phi)]^A_B.
\end{equation*}
In particular, $\dif\Omega=[\dif\nu(\Phi),\dif\nu(\Phi)]$. Therefore, if $\Phi\in W^{1,2p^*}(M,N)$, then $\Phi\in C^0(\bar M,N)$ by using the Sobolev embedding theorem. As a consequence,
\begin{equation*}
\Omega\in L^{2p^*}(M),\quad \dif\Omega\in L^{p^*}(M).
\end{equation*}
Hence, by using the \autoref{thm:dirac-omega}, we get a unique solution of $\Psi\in W^{1,p}(\Sigma M\otimes\Phi^{-1}T\R^q)$ for some larger $q$. Moreover, there exists a constant $c=c\left(p,\norm{\Phi}_{W^{1,2p^*}(M)}\right)>0$ such that
\begin{equation*}
\norm{\Psi}_{W^{1,p}(M)}\leq c\left(\norm{\eta}_{L^p(M)}+\norm{\mathcal{B}\psi}_{W^{1-1/p,p}(\partial M)}\right).
\end{equation*}
\par
Now we want to prove that $\Psi$ is a spinor along the map $\Phi$. Introduce $\tilde\Psi^A=\nu^A_B\Psi^B$, then we need only to prove that $\tilde\Psi=0$.
\begin{claim}
\begin{equation*}
\begin{cases}
\dirac\tilde\Psi^A+\Omega^A_B\cdot\tilde\Psi^B=0,& M;\\
\mathcal{B}\tilde\Psi^A=0,&\partial M.
\end{cases}
\end{equation*}
\end{claim}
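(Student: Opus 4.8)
The plan is to verify the two identities in the Claim by a direct computation built on the algebraic properties of $\nu$ and $\Omega$ recorded in \autoref{sec:dh}, and then to read off $\tilde\Psi\equiv0$ from the uniqueness part of \autoref{thm:dirac-omega}.

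First I would collect the facts to be used. Along $\Phi$ the map $\pi$ restricts to a projection, so $\nu^2=\nu$; differentiating gives $\dif\nu=\nu\,\dif\nu+\dif\nu\,\nu$, and multiplying the latter on either side by $\nu$ yields $\nu\,\dif\nu\,\nu=0$. Recall also $\Omega^A_B=\nu^A_C\dif\nu^C_B-\dif\nu^A_C\nu^C_B$, and that $\eta$ and $\mathcal{B}\psi$ are sections along $\Phi$, resp.\ $\phi$, so $\nu^A_B(\Phi)\eta^B=0$ on $M$ and $\nu^A_B(\phi)(\mathcal{B}\psi^B)=0$ on $\partial M$.

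Next, since the $\nu^A_B(\Phi)$ are matrix-valued functions, the Leibniz rule for $\dirac=e_i\cdot\nabla_{e_i}$ gives $\dirac\tilde\Psi^A=\dif\nu^A_B\cdot\Psi^B+\nu^A_B\,\dirac\Psi^B$, where $\dif\nu^A_B\cdot$ is Clifford multiplication by the $1$-form $\dif\nu^A_B$. Inserting $\dirac\Psi^B=\eta^B-\Omega^B_C\cdot\Psi^C$, using $\nu^A_B\eta^B=0$, and adding $\Omega^A_B\cdot\tilde\Psi^B=\Omega^A_B\nu^B_C\cdot\Psi^C$, one finds that $\dirac\tilde\Psi^A+\Omega^A_B\cdot\tilde\Psi^B$ is the Clifford product of $\Psi^C$ with the matrix-valued $1$-form $\dif\nu^A_C-\nu^A_B\Omega^B_C+\Omega^A_B\nu^B_C$. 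Expanding $\Omega$ and simplifying by means of $\nu^2=\nu$, this $1$-form equals $\big(\dif\nu^A_C-\nu^A_B\dif\nu^B_C-\dif\nu^A_B\nu^B_C\big)+2\,\nu^A_B\dif\nu^B_D\nu^D_C$; the first bracket vanishes by $\dif\nu=\nu\,\dif\nu+\dif\nu\,\nu$ and the last term by $\nu\,\dif\nu\,\nu=0$. Hence $\dirac\tilde\Psi^A+\Omega^A_B\cdot\tilde\Psi^B=0$ on $M$. For the boundary, $\mathcal{B}^{\pm}=\tfrac12(\Id\pm\vect{n}\cdot G)$ involves only Clifford multiplication and the parallel bundle endomorphism $G$, both of which commute with multiplication by the scalar functions $\nu^A_B$; thus $\mathcal{B}\tilde\Psi^A=\nu^A_B\,\mathcal{B}\Psi^B=\nu^A_B(\mathcal{B}\psi^B)=0$ on $\partial M$, which proves the Claim.

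The main bookkeeping obstacle is keeping straight which quantities are scalar functions (which pass through Clifford multiplication and through $\mathcal{B}$) and which are genuine $1$-forms being Clifford-multiplied, together with the disciplined use of $\nu^2=\nu$, $\dif\nu=\nu\,\dif\nu+\dif\nu\,\nu$ and $\nu\,\dif\nu\,\nu=0$, which hold only along $\Phi$. Since $\Psi\in W^{1,p}$, I would first carry out the computation for smooth $\Phi$ and $\Psi$ and then pass to the limit by the density arguments already employed in \autoref{sec:dirac}. Finally, $\nu^A_B(\Phi)\in W^{1,2p^*}\cap C^0$ and $2p^*\geq m$ imply $\tilde\Psi=\nu(\Phi)\Psi\in W^{1,p}$, so \autoref{thm:dirac-omega} applies to the system in the Claim and forces $\tilde\Psi\equiv0$; therefore $\Psi$ is a spinor along $\Phi$, which completes the proof of \autoref{thm:main-dirac}.
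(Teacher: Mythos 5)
Your proof is correct and follows essentially the same route as the paper's: Leibniz rule for $\dirac$ applied to $\tilde\Psi^A=\nu^A_B\Psi^B$, substitution of the Dirac equation for $\Psi$, use of $\nu\eta=0$, and the projection identities $\nu^2=\nu$, $\dif\nu=\nu\,\dif\nu+\dif\nu\,\nu$, $\nu\,\dif\nu\,\nu=0$, plus the observation that $\mathcal{B}$ commutes with the scalar matrix $\nu^A_B$, after which \autoref{thm:dirac-omega} gives $\tilde\Psi\equiv0$. The only (cosmetic) difference is bookkeeping: the paper first reduces $\dirac\tilde\Psi^A$ to $\dif\nu^A_B\cdot\tilde\Psi^B$ and then rewrites that as $-\Omega^A_B\cdot\tilde\Psi^B$ using $\pi\tilde\Psi=0$, whereas you show directly that the matrix-valued $1$-form $\dif\nu-\nu\Omega+\Omega\nu$ vanishes identically along $\Phi$ before it ever hits $\Psi$.
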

If this claim is true, then using the \autoref{thm:dirac-omega} again, we get that $\tilde\Psi=0$. Hence, we complete the proof of this theorem.
\par
Now, we confirm the claim. In fact, noticing that $\nu^A_B\eta^B=0,\nu^A_B\mathcal{B}\psi^B=0$ since $\eta$ is a spinor along the map $\Phi$ and $\mathcal{B}\psi$ is the restriction of a spinor along the map $\Phi$ to the boundary $\partial M$, we have that
\begin{gather*}
\begin{split}
\dirac\tilde\Psi^A=&\nu^A_B\dirac\psi^B+\dif\nu^A_B\cdot\Psi^B=-\nu^A_B\dif\nu^B_C\cdot\Psi^C+\nu^A_B\dif\nu^B_C\nu^C_D\cdot\Psi^D+\nu^A_B\eta^B+\dif\nu^A_B\cdot\Psi^B\\
=&\dif\nu^A_B\cdot\tilde\Psi^B=-\dif\pi^A_B\cdot\tilde\Psi^B=\left(\dif\pi^A_C\pi^C_B-\pi^A_C\dif\pi^C_B\right)\cdot\tilde\Psi^B=\left(\dif\nu^A_C\nu^C_B-\nu^A_C\dif\nu^C_B\right)\cdot\tilde\Psi^B\\
=&-\Omega^A_B\cdot\tilde\Psi^B,
\end{split}
\intertext{and}
\mathcal{B}\tilde\Psi^A=\nu^A_B\mathcal{B}\Psi^B=\nu^A_B\mathcal{B}\psi^B=0.
\end{gather*}
\end{proof}
\begin{rem}Using the same method, one can prove that
\begin{equation*}
\begin{cases}
\D\psi=\eta\in L^p\left(E\otimes\Phi^{-1}V\right),&M;\\
\mathcal{B}\Psi=\mathcal{B}\psi\in W^{1-1/p,p}\left(\left(E\otimes\Phi^{-1}V\right)\vert_{\partial M}\right),&\partial M
\end{cases}
\end{equation*}
admits a unique solution $\Psi\in W^{1,p}(E\otimes\Phi^{-1}V)$, where $E$ is a Dirac bundle on $M$, $V$ is a Hermitian metric vector bundle on $N$, $\D$ is the associated Dirac operator of $E\otimes\Phi^{-1}V$, the Dirac connection $\nabla=\nabla_0+\Gamma$ satisfies the condition $\Gamma\in\mathfrak{D}^{p^*}E$ and $\Phi\in W^{1,2p^*}(M;N)$.
\end{rem}
\begin{rem}If $\Phi$ is smooth, then $\Sigma M\otimes\Phi^{-1}TN$ is a smooth Dirac bundle, in this case,  \autoref{thm:main-dirac} is just a direct corollary of \autoref{thm:dirac}.
\end{rem}
\vspace{0.2cm}
\par
Now let us give some further remarks on {\it the Schauder theory of Dirac equations.}
The interior Schauder estimate  for the Dirac equation is
\begin{theorem}[See \cite{Ammann2003variational}]Let $M,E$ be as in \autoref{thm:dirac}. Suppose that $\slashed\Gamma\in C^{\alpha}(M)$ for some $0<\alpha<1$, then for all $M'\Subset M''\Subset M$,
\begin{equation*}
\norm{\psi}_{1+\alpha;M'}\leq c\left(\alpha,\dist(M',\partial M''),\norm{\slashed\Gamma}_{\alpha;M''}\right)\left(\norm{\D\psi}_{\alpha;M''}+\norm{\psi}_{0;M''}\right).
\end{equation*}
\end{theorem}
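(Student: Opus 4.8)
The plan is to adapt the classical Korn--Lichtenstein (freezing--coefficients) proof of interior Schauder estimates to the first order operator $\D=\D_0+\slashed\Gamma$, using the fact that the principal symbol of the smooth Dirac operator $\D_0$ is Clifford multiplication, so that $\D_0^2$ has scalar, Laplace--type principal part; freezing that leading part at a point turns the local analysis into one for a constant coefficient scalar elliptic operator.

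The first step is to peel off $\slashed\Gamma$. Working with the a priori estimate, so that $\psi$ may be assumed $C^{1,\alpha}_{\mathrm{loc}}$ (the passage from a weak, say $W^{1,p}_{\mathrm{loc}}$, solution to this regularity is a routine consequence of interior elliptic regularity, using also \autoref{thm:Lp-estimate}), the equation $\D\psi=\varphi$ reads $\D_0\psi=\varphi-\slashed\Gamma\psi$, and since $\slashed\Gamma\in C^{\alpha}$ we have $\varphi-\slashed\Gamma\psi\in C^{\alpha}_{\mathrm{loc}}$ with $\norm{\varphi-\slashed\Gamma\psi}_{\alpha;M''}\le c\bigl(\norm{\varphi}_{\alpha;M''}+\norm{\slashed\Gamma}_{\alpha;M''}\norm{\psi}_{\alpha;M''}\bigr)$. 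Hence it suffices to establish, for the \emph{smooth} operator $\D_0$, the interior estimate $\norm{\psi}_{1+\alpha;M'}\le c\bigl(\norm{\D_0\psi}_{\alpha;M''}+\norm{\psi}_{\alpha;M''}\bigr)$ and then to remove the term $\norm{\psi}_{\alpha;M''}$ by interpolation together with an iteration over a finite chain of nested subdomains $M'\Subset\dots\Subset M''$; it is this last step that produces the dependence of $c$ on $\dist(M',\partial M'')$ and on $\norm{\slashed\Gamma}_{\alpha;M''}$.

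For $\D_0$ I would localize and freeze coefficients. Choose $\chi\in C_0^\infty(M'')$ with $\chi\equiv 1$ on $M'$; then $\tilde\psi\coloneqq\chi\psi$ has compact support and $\D_0\tilde\psi=\chi\,\D_0\psi+\dif\chi\cdot\psi\eqqcolon h$ with $\norm{h}_{\alpha}\le c\bigl(\norm{\D_0\psi}_{\alpha;M''}+\norm{\psi}_{\alpha;M''}\bigr)$. In a coordinate chart with a local trivialization of $E$ write $\D_0=\gamma^i(x)\partial_i+B(x)$ with $\gamma^i,B$ smooth and $\gamma^i(x)\gamma^j(x)+\gamma^j(x)\gamma^i(x)=-2g^{ij}(x)\Id$. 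Fixing $x_0$ and setting $D_{x_0}\coloneqq\gamma^i(x_0)\partial_i$ one has $D_{x_0}^2=-g^{ij}(x_0)\partial_i\partial_j$, a constant coefficient elliptic scalar operator, whose fundamental solution yields, for compactly supported $u$, a representation of $\partial_j u$ as a Calder\'on--Zygmund singular integral of $D_{x_0}u$; the classical H\"older estimate for such integrals gives $[\partial u]_{\alpha}\le c\,[D_{x_0}u]_{\alpha}$ with $c$ uniform in $x_0$ by the uniform ellipticity of $\D_0$ on $\overline{M''}$. Applying this to $u=\tilde\psi$, after rewriting $D_{x_0}\tilde\psi=h-\bigl(\gamma^i(x)-\gamma^i(x_0)\bigr)\partial_i\tilde\psi-B(x)\tilde\psi$ and estimating on a small ball $B_r(x_0)$ the perturbation term as $[(\gamma^i(x)-\gamma^i(x_0))\partial_i\tilde\psi]_{\alpha;B_r}\le c\,r\,[\partial\tilde\psi]_{\alpha;B_r}+c\norm{\partial\tilde\psi}_{0;B_r}$ (and similarly for $B\tilde\psi$), one chooses $r$ so small that $c\,r<\tfrac12$, dominates the remaining lower order pieces by interpolation, and absorbs $[\partial\tilde\psi]_{\alpha;B_r}$ into the left hand side; a finite cover of $M'$ by such balls then gives the interior estimate for $\D_0$.

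I expect the main obstacle to be purely technical rather than conceptual: the bookkeeping of the freezing step (keeping the Calder\'on--Zygmund constant uniform in $x_0$, choosing the radius $r$ in terms of the modulus of continuity of the coefficients and of $\norm{\slashed\Gamma}_{\alpha;M''}$, and carrying through the interpolation--iteration that finally leaves only $\norm{\psi}_{0;M''}$ on the right). Each piece is standard, but their combination is the familiar delicate part of Schauder theory. I note that the singular integrals can be avoided by running Campanato's iteration instead: on $B_r(x_0)$ compare $\tilde\psi$ with the solution of $D_{x_0}v=0$ having the same boundary trace, whose gradient has small mean oscillation on subballs, and iterate the resulting decay estimate to conclude $\nabla\psi\in C^{\alpha}_{\mathrm{loc}}$ with the asserted bound.
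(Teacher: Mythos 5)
The paper does not actually give a proof of this statement; it simply cites Ammann's habilitation thesis \cite{Ammann2003variational}. So the comparison here is between your argument and a reference, not between your argument and an in-paper proof. With that understood, your Korn--Lichtenstein (freezing-coefficients) outline is the standard and correct route to interior Schauder estimates for first-order elliptic operators. The peel-off of $\slashed\Gamma$, the localization, the representation $\partial_j u = (\partial_j D_{x_0} E_L)*(D_{x_0}u)$ (with $E_L$ the fundamental solution of the constant-coefficient scalar operator $L=-g^{ij}(x_0)\partial_i\partial_j = D_{x_0}^2$) feeding the H\"older estimate for Calder\'on--Zygmund convolution, the small-ball absorption of $\bigl(\gamma^i(x)-\gamma^i(x_0)\bigr)\partial_i\tilde\psi$, the uniformity of the CZ constant in $x_0$ from uniform ellipticity on $\overline{M''}$, and the final interpolation-plus-nested-domain iteration to reduce $\norm{\psi}_{\alpha;M''}$ on the right to $\norm{\psi}_{0;M''}$ --- all of this is sound and matches the dependence of the constant stated in the theorem. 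The only place where you would have to be a bit careful, and which you already flag as the "delicate part," is that the CZ estimate requires a cutoff supported in a ball of the chosen small radius $r$; a second cutoff and a patching step are needed there, but this is standard bookkeeping.

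One genuine (though easily repaired) slip is in the Campanato aside at the end: you propose to compare $\tilde\psi$ with the solution of $D_{x_0}v=0$ on $B_r(x_0)$ having the same boundary trace. For a \emph{first-order} elliptic system the full Dirichlet problem is over-determined (think of $\bar\partial v=0$ on a disk with prescribed boundary values), so such a $v$ will in general not exist. The Campanato iteration does work here, but the comparison must be run through the \emph{second-order} operator $L=D_{x_0}^2$: decompose $\tilde\psi = v+w$ on $B_r$ with $Lv=0$, $v=\tilde\psi$ on $\partial B_r$, and $Lw = D_{x_0}(f-f(x_0))$, $w=0$ on $\partial B_r$ (using $D_{x_0}(\mathrm{const})=0$ to subtract the value at the center); the energy estimate for $w$ uses integration by parts against $D_{x_0}w$, and the mean-oscillation decay for $\nabla v$ comes from $L$-harmonicity. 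With that correction, the Campanato alternative is as valid as the singular-integral route.
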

Due to \autoref{prop:chirality}, one can also state a  boundary Schauder estimate for the Dirac equation.
\begin{theorem}\label{thm:schauder}Let $M,E$ be as in \autoref{thm:dirac}. Suppose that $\slashed\Gamma\in C^{\alpha}(\bar M)$ for some $0<\alpha<1$, then
\begin{equation*}
\norm{\psi}_{1+\alpha;M}\leq c\left(\alpha,\norm{\slashed\Gamma}_{\alpha:M}\right)\left(\norm{\D\psi}_{\alpha;M}+\norm{\mathcal{B}\psi}_{1+\alpha;\partial M}+\norm{\psi}_{0;M}\right).
\end{equation*}
\end{theorem}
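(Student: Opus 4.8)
The plan is to mirror the proof of the $W^{s,p}$ a priori estimate of \autoref{thm:fredholm}, but now on the H\"older scale, so that only two new points have to be addressed: that the chiral boundary condition is still a genuine (local) elliptic boundary condition, which is recorded in \autoref{sec:dh}, and that the non-smooth zeroth-order term $\slashed\Gamma$ of $\D=\D_0+\slashed\Gamma$ can be absorbed. The interior behaviour is already supplied by the interior Schauder estimate stated just above, so everything of substance is local near $\partial M$.

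First I would reduce to homogeneous boundary data. Since $\mathcal B$ is a smooth bundle projection along $\partial M$, the section $\mathcal B\psi\in C^{1+\alpha}(\partial M)$ admits an extension $\psi_0\in C^{1+\alpha}(\bar M)$ with $\mathcal B\psi_0=\mathcal B\psi$ on $\partial M$ and $\norm{\psi_0}_{1+\alpha;M}\le c\,\norm{\mathcal B\psi}_{1+\alpha;\partial M}$; replacing $\psi$ by $\psi-\psi_0$ changes $\D\psi$ only by $\D\psi_0\in C^{\alpha}(M)$ (with $C^\alpha$-norm controlled by $\norm{\mathcal B\psi}_{1+\alpha;\partial M}$ and $\norm{\slashed\Gamma}_{\alpha;M}$) and makes $\mathcal B\psi=0$. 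Next, writing $\D=\D_0+\slashed\Gamma$ with $\D_0$ smooth, one has $\norm{\slashed\Gamma\psi}_{\alpha;M}\le\norm{\slashed\Gamma}_{\alpha;M}\norm{\psi}_{\alpha;M}$ and, by the elementary H\"older interpolation $\norm{\psi}_{\alpha;M}\le\eps\norm{\psi}_{1+\alpha;M}+c(\eps)\norm{\psi}_{0;M}$; hence, once the estimate is known for the smooth operator $\D_0$ with homogeneous chiral boundary condition, the term $\slashed\Gamma\psi$ can be moved to the right-hand side and, choosing $\eps$ small depending only on $\norm{\slashed\Gamma}_{\alpha;M}$, absorbed into the left-hand side --- precisely the absorption mechanism used in the proof of \autoref{thm:Lp-estimate} and \autoref{rem:mu>2}. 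So it remains to treat $\D_0$ with $\mathcal B\psi=0$.

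For that I would square. By the Weitzenb\"ock formula \eqref{eq:Weitzenbock}, $\D_0^2=-\nabla_0^2+\mathcal R_0$ with $\mathcal R_0$ smooth, so with $\varphi\coloneqq\D_0\psi\in C^{\alpha}(M)$ the spinor $\psi$ solves $-\nabla_0^2\psi=\D_0\varphi-\mathcal R_0\psi$, whose right-hand side lies in the negative-order H\"older space $C^{\alpha-1}(M)$ --- exactly the class for which a second-order elliptic operator returns a $C^{1+\alpha}$ solution. It remains to read off the boundary conditions on $\psi$ induced by $\mathcal B\psi=0$ and $\D_0\psi=\varphi$. Writing $\psi|_{\partial M}=\psi^++\psi^-$ for the decomposition into the $\pm1$-eigenspaces of $\vect n\cdot G$, so that $\mathcal B^{\pm}\psi=\psi^{\pm}$ --- the content of \autoref{prop:chirality} being precisely this splitting of $E|_{\partial M}$ --- the hypothesis $\mathcal B^+\psi=0$ is the Dirichlet condition $\psi^+=0$; and from $\bar\D=\vect n\cdot\D+\nabla_{\vect n}-\tfrac{m-1}{2}h$, together with the fact that $\vect n\cdot$ and $\bar\D$ both anticommute with $\vect n\cdot G$ on $\partial M$, one obtains there (denoting by $(\,\cdot\,)^{\pm}$ the $\IM\mathcal B^{\pm}|_{\partial M}$-component) the Robin-type condition $(\nabla_{\vect n}\psi)^-=-\,\vect n\cdot\varphi^++\tfrac{m-1}{2}h\,\psi^-$ on the free component $\psi^-$, with data $\vect n\cdot\varphi^+|_{\partial M}\in C^{\alpha}(\partial M)$. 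Dirichlet on $\IM\mathcal B^+|_{\partial M}$ together with Neumann/Robin on the complementary sub-bundle $\IM\mathcal B^-|_{\partial M}$ is a complementing boundary condition for $-\nabla_0^2$, so the classical Schauder theory for mixed elliptic boundary value problems gives
\[
\norm{\psi}_{1+\alpha;M}\le c\big(\norm{\D_0\varphi}_{\alpha-1;M}+\norm{\mathcal R_0\psi}_{\alpha-1;M}+\norm{\psi^+}_{1+\alpha;\partial M}+\norm{\vect n\cdot\varphi^+}_{\alpha;\partial M}+\norm{\psi}_{0;M}\big),
\]
and bounding $\norm{\D_0\varphi}_{\alpha-1;M}\le c\,\norm{\varphi}_{\alpha;M}$, $\norm{\mathcal R_0\psi}_{\alpha-1;M}\le c\,\norm{\psi}_{\alpha;M}$, $\norm{\vect n\cdot\varphi^+}_{\alpha;\partial M}\le c\,\norm{\varphi}_{\alpha;M}$, then interpolating $\norm{\psi}_{\alpha;M}$ once more, one arrives at $\norm{\psi}_{1+\alpha;M}\le c\big(\norm{\D_0\psi}_{\alpha;M}+\norm{\psi}_{0;M}\big)$. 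Unwinding the two reductions completes the proof.

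I expect the main obstacle to be the bookkeeping in this last step: one is forced to work with negative-order H\"older spaces (the natural target of $\D_0$ acting on $C^\alpha$) and to check carefully that the two boundary conditions produced by $\mathcal B\psi=0$ and $\D_0\psi=\varphi$ really are of the Dirichlet and Robin type to which the classical mixed-problem Schauder estimates apply. An alternative that sidesteps the squaring is to invoke directly the Agmon--Douglis--Nirenberg Schauder theory for first-order elliptic systems: $(\D_0,\mathcal B)$ satisfies the Lopatinski--Shapiro (complementing) condition, which is exactly the relation $\vect n\cdot X\cdot\mathcal B^{\pm}=\mathcal B^{\mp}\cdot\vect n\cdot X$ noted in \autoref{sec:dh}, so the H\"older a priori estimate is an instance of that theory, just as \autoref{thm:fredholm} is an instance of the corresponding $W^{s,p}$ theory; the term $\slashed\Gamma$ is then absorbed as above and the interior contribution is again handled by the interior Schauder estimate.
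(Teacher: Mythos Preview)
Your proposal is correct and considerably more detailed than the paper's own proof, which is a single sentence: ``The classical argument for Schauder estimates (see \cite{Gilbarg2001elliptic}) can be combined with \autoref{prop:chirality}.'' Of your two routes, the alternative you sketch at the end --- the Agmon--Douglis--Nirenberg Schauder theory applied directly to the first-order elliptic boundary problem $(\D_0,\mathcal B)$, followed by the absorption of $\slashed\Gamma$ via interpolation --- is almost certainly what the paper has in mind, paralleling \autoref{thm:fredholm} on the Sobolev scale. Your squaring route is a genuinely different and interesting argument: it trades the first-order boundary problem for a second-order one with mixed Dirichlet/Robin conditions on the complementary sub-bundles $\IM\mathcal B^{\pm}$, and your derivation of the Robin condition $(\nabla_{\vect n}\psi)^{-}=-\vect n\cdot\varphi^{+}+\tfrac{m-1}{2}h\,\psi^{-}$ from $\bar\D=\vect n\cdot\D_0+\nabla_{\vect n}-\tfrac{m-1}{2}h$ together with the intertwining relations is correct. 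The cost, which you rightly flag, is that you are forced into the negative-order H\"older space $C^{\alpha-1}$ for the right-hand side $\D_0\varphi$, and the corresponding Schauder estimate for $-\nabla_0^2$ is not in Gilbarg--Trudinger; it is true, but needs an independent reference or argument.

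One small correction: the splitting $E|_{\partial M}=\IM\mathcal B^{+}\oplus\IM\mathcal B^{-}$ is simply the definition of the projections $\mathcal B^{\pm}$ in \autoref{sec:dh}, not ``the content of \autoref{prop:chirality}.'' \autoref{prop:chirality} is the integral inequality $\big|\int_{\partial M}(\norm{\psi}^2-2\norm{\mathcal B\psi}^2)\big|\le 2\norm{\psi}_{L^2}\norm{\D\psi}_{L^2}$, whose role in the paper is to show that for \emph{harmonic} spinors the homogeneous chiral condition forces $\psi|_{\partial M}=0$. Your argument does not actually use that inequality, and it need not; but you should not attribute the splitting to it.
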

\begin{proof}The classical argument for Schauder estimates (see \cite{Gilbarg2001elliptic}) can be combined with \autoref{prop:chirality}.
\end{proof}
By using the main $L^p$-estimates of \autoref{thm:dirac}, we can get
\begin{theorem}Let $M,E$ be as in \autoref{thm:dirac}. Suppose that $\Gamma\in\mathfrak{D}^1(E),\Gamma\in C^{\alpha}(\bar M)$ and $\dif\Gamma\in C^{\alpha}(\bar M)$ for some $0<\alpha<1$, then
\begin{equation*}
\norm{\psi}_{1+\alpha;M}\leq c\left(\alpha,\norm{\Gamma}_{\alpha:M}+\norm{\dif\Gamma}_{\alpha;M}\right)\left(\norm{\D\psi}_{\alpha;M}+\norm{\mathcal{B}\psi}_{1+\alpha;\partial M}\right).
\end{equation*}
\end{theorem}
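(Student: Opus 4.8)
The plan is to upgrade the boundary Schauder estimate \autoref{thm:schauder} by a compactness (blow-up) argument that uses the uniqueness behind \autoref{thm:dirac} to absorb the zero-order term $\norm{\psi}_{0;M}$ while keeping the constant uniform. Since $\slashed\Gamma=\gamma^E\circ\Gamma$ is obtained from $\Gamma$ by a pointwise algebraic operation, $\norm{\slashed\Gamma}_{\alpha;M}\leq c\,\norm{\Gamma}_{\alpha;M}$, so \autoref{thm:schauder} already yields
\begin{equation*}
\norm{\psi}_{1+\alpha;M}\leq c\left(\alpha,\norm{\Gamma}_{\alpha;M}+\norm{\dif\Gamma}_{\alpha;M}\right)\left(\norm{\D\psi}_{\alpha;M}+\norm{\mathcal{B}\psi}_{1+\alpha;\partial M}+\norm{\psi}_{0;M}\right),
\end{equation*}
with a constant that one may take non-decreasing in, hence controlled by a function of, an upper bound $\Lambda$ for $\norm{\Gamma}_{\alpha;M}+\norm{\dif\Gamma}_{\alpha;M}$. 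It therefore suffices to prove: for every $\Lambda>0$ there is $c(\alpha,\Lambda)>0$ so that the term $\norm{\psi}_{0;M}$ may be deleted from the right-hand side at the cost of replacing the constant by $c(\alpha,\Lambda)$, uniformly over all $\Gamma\in\mathfrak{D}^1(E)$ with $\Gamma,\dif\Gamma\in C^\alpha(\bar M)$ and $\norm{\Gamma}_{\alpha;M}+\norm{\dif\Gamma}_{\alpha;M}\leq\Lambda$.

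Suppose this fails. Then I would take such $\Gamma_n$ with $\norm{\Gamma_n}_{\alpha;M}+\norm{\dif\Gamma_n}_{\alpha;M}\leq\Lambda$, together with $\psi_n\in C^{1+\alpha}(E)$ normalized by $\norm{\psi_n}_{1+\alpha;M}=1$ but satisfying $\norm{\D_n\psi_n}_{\alpha;M}+\norm{\mathcal{B}\psi_n}_{1+\alpha;\partial M}\to0$, where $\D_n=\D_0+\slashed\Gamma_n$. Plugging into the displayed estimate forces $\liminf_n\norm{\psi_n}_{0;M}\geq c(\alpha,\Lambda)^{-1}>0$. On the other hand, $\bar M$ being compact, the embedding $C^{1+\alpha}(E)\hookrightarrow C^{1+\beta}(E)$ is compact for $0<\beta<\alpha$, and $C^\alpha$-bounded families of forms are precompact in $C^\beta$; so after passing to a subsequence, $\psi_n\to\psi$ in $C^{1+\beta}(E)$ and $\Gamma_n\to\Gamma$, $\dif\Gamma_n\to\dif\Gamma$ in $C^\beta$. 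The limit $\Gamma$ lies in $\Omega^1(\Lg{Ad}(E))$ and commutes with $\gamma^E$ (both closed conditions), and satisfies $\Gamma,\dif\Gamma\in L^{\infty}(M)\subset L^q(M)$ for all $q<\infty$; hence $\Gamma\in\mathfrak{D}^{p^*}(E)$ for every $p^*\geq1$, in particular for $p^*\geq(3m-2)/4$ when $m>2$.

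It remains to pass to the limit in $\D_n\psi_n=\D_0\psi_n+\slashed\Gamma_n\psi_n$: using that $\D_0\psi_n\to\D_0\psi$ and $\slashed\Gamma_n\psi_n\to\slashed\Gamma\psi$ in $C^\beta$ (multiplication is continuous on the Banach algebra $C^\beta$) and $\mathcal{B}\psi_n\to\mathcal{B}\psi$, we find that $\psi$ solves $\D\psi=0$ on $M$ and $\mathcal{B}\psi=0$ on $\partial M$ for the connection $\nabla_0+\Gamma$. Since $\Gamma\in\mathfrak{D}^{p^*}(E)$ with $p^*$ in the admissible range, the uniqueness results \autoref{cor:kernel} (when $m=2$) and \autoref{thm:kernel2} (when $m>2$) give $\psi\equiv0$; but then $\norm{\psi_n}_{0;M}\to\norm{\psi}_{0;M}=0$, contradicting $\liminf_n\norm{\psi_n}_{0;M}>0$. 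This proves the claim and the theorem. The two points that require care are precisely the uniformity of the constant in \autoref{thm:schauder} (which is built into its statement, $c=c(\alpha,\norm{\slashed\Gamma}_{\alpha;M})$) and the admissibility of the limiting connection for the uniqueness \autoref{thm:kernel2}, which rests on weak unique continuation — this is exactly why one must also control $\norm{\dif\Gamma}_{\alpha;M}$, even though $\dif\Gamma$ plays no role in the Schauder estimate itself, as $\D$ sees only $\slashed\Gamma$.
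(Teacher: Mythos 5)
Your argument is correct, but it is a different route from the paper's. The paper's proof is a two-line application of results already established: it invokes the main $L^p$-estimate \autoref{thm:main-estimate} with the uniform constant $c(p,\norm{\Gamma}_{p^*})$ from \autoref{rem: norm-gamma}, observes that for $p>m$ the Sobolev embedding $W^{1,p}\hookrightarrow C^0$ bounds $\norm{\psi}_{0;M}$ by $c(\alpha,\norm{\Gamma}_{\alpha;M}+\norm{\dif\Gamma}_{\alpha;M})\left(\norm{\D\psi}_{\alpha;M}+\norm{\mathcal{B}\psi}_{1+\alpha;\partial M}\right)$, and then substitutes this into the boundary Schauder estimate \autoref{thm:schauder}. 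You instead run a self-contained compactness (blow-up) argument directly at the level of H\"older norms: normalize $\norm{\psi_n}_{1+\alpha}=1$, extract a subsequence converging in $C^{1+\beta}$, show the limit is a nontrivial solution of the homogeneous boundary value problem for an admissible limit connection, and contradict the uniqueness results (\autoref{cor:kernel}, \autoref{thm:kernel2}). This is precisely the mechanism that the paper has already packaged once and for all into the proof of \autoref{rem:mu>2} (which underlies \autoref{rem: norm-gamma}), so in effect you re-derive that uniform-constant statement in H\"older rather than $L^p$ spaces. Both proofs ultimately rest on the same uniqueness input; the paper's is shorter because it reuses the uniform $L^p$ machinery, whereas yours makes the compactness mechanism explicit. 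One small point worth tightening: when you assert that the limit $\Gamma$ lies in $\mathfrak{D}^{p^*}(E)$, recall that $\mathfrak{D}^{p^*}(E)$ is defined as a completion of smooth $\mathfrak{D}(E)$, so strictly speaking you should argue that $\Gamma$ is approximable in the $\norm{\cdot}_{p^*}$ norm (e.g., by mollification, using $\Gamma,\dif\Gamma\in C^\beta$), not merely that $\norm{\Gamma}_{p^*}<\infty$; this is a technicality that the paper itself elides, and it does not affect the substance of either argument.
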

\begin{proof}By using the main $L^p$-estimates of \autoref{thm:dirac}, we know that for large $p$
\begin{equation*}
\begin{split}
\norm{\psi}_{0;M}\leq &c(p,\norm{\Gamma}_{p^*})\left(\norm{\D\psi}_{L^p(E)}+\norm{\mathcal{B}\psi}_{W^{1-1/p,p}(E\vert_{\partial M})}\right)\\
\leq& c\left(\alpha,\norm{\Gamma}_{\alpha:M}+\norm{\dif\Gamma}_{\alpha;M}\right)\left(\norm{\D\psi}_{\alpha;M}+\norm{\mathcal{B}\psi}_{1+\alpha;\partial M}\right).
\end{split}
\end{equation*}
Then applying \autoref{thm:schauder}, we prove the desired result.
\end{proof}
Similarly to the case of the $L^p$-estimate, we can prove the following two theorems:
\begin{theorem}Let $M,E$ be as in \autoref{thm:dirac}. Suppose that $\Gamma\in\mathfrak{D}^1(E),\Gamma\in C^{\alpha}(\bar M),\dif\Gamma\in C^{\alpha}(\bar M),\Omega\in C^{\alpha}(\bar M),\dif\Omega\in C^{\alpha}(\bar M)$ for some $0<\alpha<1$.
Let $\eta\in C^{\alpha}(\bar M),\mathcal{B}\psi_0\in C^{1,\alpha}(\partial M)$, then \eqref{eq:system} admits a unique solution $\psi\in C^{1,\alpha}(\bar M)$. Moreover, the following estimate holds
 \begin{equation*}
\norm{\psi}_{1+\alpha;M}\leq c\left(\alpha,\norm{\Gamma}_{\alpha;M}+\norm{\dif\Gamma}_{\alpha;M},\norm{\Omega}_{\alpha;M}+\norm{\dif\Omega}_{\alpha;M}\right)\left(\norm{\D\psi}_{\alpha;M}+\norm{\mathcal{B}\psi}_{1+\alpha;\partial M}\right).
\end{equation*}
 \end{theorem}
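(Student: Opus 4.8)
The plan is to establish this as the Schauder counterpart of \autoref{thm:dirac-omega}: reduce the system \eqref{eq:system} to a single Dirac equation on an auxiliary Dirac bundle, produce a solution from the $L^p$-theory, bootstrap its regularity by Schauder theory, and then close the estimate with the Schauder a priori bound already proved for a single Dirac equation. The new point beyond \autoref{thm:dirac-omega} is the bookkeeping that all constants depend only on $\alpha$ and the $C^\alpha$-norms of $\Gamma$, $\dif\Gamma$, $\Omega$, $\dif\Omega$.

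First I would form, exactly as in the proof of \autoref{thm:dirac-omega}, the auxiliary Dirac bundle $\tilde E$ (a Whitney sum of copies of $E$) with Clifford multiplication $\tilde\gamma=\gamma^E\Id$, connection $\tilde\nabla=\tilde\nabla_0+\tilde\Gamma+\Gamma'$ where $\tilde\Gamma(X)(\psi^A)=(\Gamma(X)\psi^A)$ and $\Gamma'(X)(\psi^A)=(\Omega^A_B(X)\psi^B)$, and chirality operator $\tilde G(\psi^A)=(G\psi^A)$; there it is checked that $[\Gamma',\tilde\gamma]=0$ and that $\tilde G$ is a chirality operator, so $\tilde{\mathcal B}(\psi^A)=(\mathcal B\psi^A)$ is a chiral boundary operator on $\tilde E$ and \eqref{eq:system} becomes $\tilde\D\tilde\psi=\tilde\eta$ in $M$, $\tilde{\mathcal B}\tilde\psi=\tilde{\mathcal B}\tilde\psi_0$ on $\partial M$. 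Since $\Gamma,\dif\Gamma,\Omega,\dif\Omega\in C^\alpha(\bar M)$ and $\bar M$ is compact, one gets $\tilde\Gamma+\Gamma'\in\mathfrak D^1(\tilde E)$ with both $\tilde\Gamma+\Gamma'$ and $\dif(\tilde\Gamma+\Gamma')$ in $C^\alpha(\bar M)$, and also $\Gamma\in\mathfrak D^{p^*}(E)$, $\Omega\in L^{2p^*}(M)$, $\dif\Omega\in L^{p^*}(M)$ for every $p^*\geq1$.

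Next I would obtain the solution and its regularity. Taking $p^*$ arbitrarily large in \autoref{thm:dirac-omega} (equivalently \autoref{thm:dirac} applied on $\tilde E$), for every $1<p<\infty$ there is a unique $\psi\in W^{1,p}(\tilde E)$ solving the reduced system, with $\norm{\psi}_{W^{1,p}}\leq c\left(\norm{\eta}_{L^p}+\norm{\mathcal B\psi_0}_{W^{1-1/p,p}(\partial M)}\right)$. Choosing $p>m$ with $1-m/p\geq\alpha$, the embedding $W^{1,p}(\bar M)\hookrightarrow C^{0,\alpha}(\bar M)$ gives $\psi\in C^{0,\alpha}(\bar M)$; hence $\tilde\eta\in C^\alpha(\bar M)$, $\tilde{\mathcal B}\tilde\psi\in C^{1,\alpha}(\partial M)$, and $\tilde\D$ has $C^\alpha$ coefficients. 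The interior Schauder estimate for the Dirac operator (\cite{Ammann2003variational}) together with the boundary Schauder regularity for the chiral boundary condition --- which rests on \autoref{prop:chirality}, cf. \autoref{thm:schauder} --- then upgrades $\psi$ to an element of $C^{1,\alpha}(\bar M)$. With $\psi\in C^{1,\alpha}(\bar M)$ in hand, I would apply the Schauder estimate for a single Dirac equation with connection in $\mathfrak D^1\cap C^\alpha$ (and $\dif\Gamma\in C^\alpha$), established just above, to the bundle $\tilde E$:
\begin{equation*}
\norm{\psi}_{1+\alpha;M}\leq c\left(\alpha,\norm{\tilde\Gamma+\Gamma'}_{\alpha;M}+\norm{\dif(\tilde\Gamma+\Gamma')}_{\alpha;M}\right)\left(\norm{\tilde\D\psi}_{\alpha;M}+\norm{\tilde{\mathcal B}\psi}_{1+\alpha;\partial M}\right).
\end{equation*}
Since $\tilde\D\psi=\tilde\eta$, $\tilde{\mathcal B}\psi=\tilde{\mathcal B}\psi_0$, and the $C^\alpha$-norms of $\tilde\Gamma,\Gamma'$ and their differentials are bounded by those of $\Gamma$ and $\Omega$ with $M$-dependent constants, this is the asserted estimate (reading $\D\psi$ in the statement as the operator on the left side of \eqref{eq:system} applied to $\psi$, i.e. as $\eta$; equivalently one uses $\norm{\eta}_{\alpha;M}\leq\norm{\D\psi}_{\alpha;M}+c\,\norm{\Omega}_{\alpha;M}\norm{\psi}_{\alpha;M}$ and absorbs the lower-order term after lowering the exponent). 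Uniqueness in $C^{1,\alpha}$ is immediate from the $W^{1,p}$-uniqueness in \autoref{thm:dirac-omega}, since any $C^{1,\alpha}$ solution lies in $W^{1,p}$ for all finite $p$.

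The hard part will be the up-to-the-boundary Schauder regularity for the chiral boundary operator used in the bootstrap: a generic first-order elliptic boundary problem need not enjoy it, and here it is precisely the chirality (and conformal invariance) encoded in \autoref{prop:chirality} --- already exploited in \autoref{thm:schauder} --- that makes it work. Once that is granted, the remainder is assembly: the bundle reduction, the $L^p$ existence, the Schauder bootstrap, and the Schauder a priori bound, while carefully tracking the dependence of the constants on the $C^\alpha$-norms of $\Gamma$, $\dif\Gamma$, $\Omega$, $\dif\Omega$.
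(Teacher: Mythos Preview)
Your proposal is correct and follows essentially the same route as the paper: the paper does not spell out a proof here but simply says ``Similarly to the case of the $L^p$-estimate, we can prove the following two theorems,'' which is precisely your plan of repeating the Whitney-sum reduction of \autoref{thm:dirac-omega} and then invoking the Schauder a priori bound just established for a single Dirac equation. Your additional bookkeeping (the $L^p$ existence, the Sobolev embedding into $C^{0,\alpha}$, and the boundary Schauder bootstrap via \autoref{thm:schauder}) makes explicit what the paper leaves implicit, and your reading of $\D\psi$ in the displayed estimate as the full operator on $\tilde E$ is the natural one.
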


 \begin{theorem}Let $M,N$ be as in \autoref{thm:main-dirac}. Let $\Phi\in C^{1,\alpha}(\bar M,N)$ for some $0<\alpha<1$, then the following Dirac equation
\begin{equation*}
\begin{cases}
\D\Psi=\eta\in C^{\alpha}(\bar M;\Sigma M\otimes\Phi^{-1}TN),&M;\\
\mathcal{B}\Psi=\mathcal{B}\psi\in C^{1,\alpha}\left(\partial M;\Sigma M\otimes\Phi^{-1}TN\right),&\partial M
\end{cases}
\end{equation*}
admits a unique solution $\Psi\in C^{1,\alpha}(\bar M:\Sigma M\otimes\Phi^{-1}TN)$, where $\D$ is the Dirac operator along the map $\Phi$. Moreover,
\begin{equation*}
\norm{\Psi}_{\alpha;M}\leq c(\alpha,\norm{\Phi}_{1+\alpha;M})\left(\norm{\eta}_{\alpha;M}+\norm{\mathcal{B}\psi}_{1+\alpha;\partial M}\right).
\end{equation*}
\end{theorem}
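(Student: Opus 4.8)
The plan is to reduce the problem to the preceding Schauder existence and uniqueness theorem for the system \eqref{eq:system}, in exactly the way \autoref{thm:main-dirac} was reduced to \autoref{thm:dirac-omega}. First I would embed $N$ isometrically into some $\R^q$, extend the nearest point projection $\pi$ to a smooth compactly supported map of $\R^q$ as in \autoref{sec:dh}, and rewrite the Dirac equation along $\Phi$ in the ambient form
\begin{equation*}
\begin{cases}
\dirac\Psi^A+\Omega^A_B\cdot\Psi^B=\eta^A,&M;\\
\mathcal{B}\Psi^A=\mathcal{B}\psi^A,&\partial M,
\end{cases}
\end{equation*}
where $\nu^A_B=\delta^A_B-\pi^A_B$, $\Omega^A_B=[\nu(\Phi),\dif\nu(\Phi)]^A_B$, and hence $\dif\Omega=[\dif\nu(\Phi),\dif\nu(\Phi)]$. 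Here the Dirac operator $\dirac$ acts on $\Sigma M\otimes\R^q$ and is built from the smooth spin connection, so the connection perturbation is $\Gamma=0$; all the $\Phi$-dependence is carried by the zeroth order potential $\Omega$.

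The next step is to check the hypotheses of that Schauder theorem. Since $\Phi\in C^{1,\alpha}(\bar M,N)$ and $\pi$ is smooth, the composition $\nu(\Phi)$ lies in $C^{1,\alpha}(\bar M)$, so $\dif\nu(\Phi)\in C^{\alpha}(\bar M)$; therefore $\Omega$ and $\dif\Omega$ both lie in $C^{\alpha}(\bar M)$, with $C^\alpha$-norms bounded in terms of $\norm{\Phi}_{1+\alpha;M}$. Together with $\Gamma=0$, $\eta\in C^\alpha(\bar M)$ and $\mathcal{B}\psi\in C^{1,\alpha}(\partial M)$, the preceding theorem yields a unique $\Psi=(\Psi^A)\in C^{1,\alpha}(\bar M;\Sigma M\otimes\R^q)$ solving the ambient system, together with the Schauder estimate
\begin{equation*}
\norm{\Psi}_{1+\alpha;M}\leq c\left(\alpha,\norm{\Phi}_{1+\alpha;M}\right)\left(\norm{\eta}_{\alpha;M}+\norm{\mathcal{B}\psi}_{1+\alpha;\partial M}\right),
\end{equation*}
which in particular gives the claimed $C^\alpha$ bound.

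It then remains to show that $\Psi$ is genuinely a spinor along $\Phi$, namely that $\tilde\Psi^A\coloneqq\nu^A_B(\Phi)\Psi^B$ vanishes identically. This is the computation already carried out in the proof of \autoref{thm:main-dirac}: using $\nu^A_B\eta^B=0$ and $\nu^A_B\mathcal{B}\psi^B=0$, which hold because $\eta$ and $\mathcal{B}\psi$ are, respectively, a section and a boundary section along $\Phi$, together with the algebraic identity $\dif\nu^A_B\cdot\tilde\Psi^B=(\dif\nu^A_C\nu^C_B-\nu^A_C\dif\nu^C_B)\cdot\tilde\Psi^B=-\Omega^A_B\cdot\tilde\Psi^B$, one finds $\dirac\tilde\Psi^A+\Omega^A_B\cdot\tilde\Psi^B=0$ in $M$ and $\mathcal{B}\tilde\Psi^A=\nu^A_B\mathcal{B}\Psi^B=0$ on $\partial M$. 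The uniqueness clause of the preceding theorem then forces $\tilde\Psi\equiv0$, so $\Psi\in C^{1,\alpha}(\bar M;\Sigma M\otimes\Phi^{-1}TN)$, and uniqueness of the solution of the original boundary value problem is inherited from uniqueness for the ambient system. I expect the only points needing care to be the routine bookkeeping that $\nu(\Phi)$ and the products defining $\Omega$ and $\dif\Omega$ stay in $C^\alpha(\bar M)$ with norm controlled by $\norm{\Phi}_{1+\alpha;M}$, and the observation that, exactly as in \autoref{thm:main-dirac}, the $\tilde\Psi$-computation remains valid at the reduced ($C^{1,\alpha}$ rather than smooth) regularity of $\Phi$ since every manipulation is purely algebraic and uses at most first derivatives of $\nu(\Phi)$.
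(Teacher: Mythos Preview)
Your proposal is correct and follows exactly the approach the paper indicates: the paper states this theorem without proof, noting only that it is proved ``similarly to the case of the $L^p$-estimate,'' i.e., by reducing to the preceding Schauder theorem for the system \eqref{eq:system} in the same way \autoref{thm:main-dirac} was reduced to \autoref{thm:dirac-omega}. Your reduction, verification of the $C^\alpha$ hypotheses on $\Omega$ and $\dif\Omega$, and the $\tilde\Psi$ argument to land in $\Sigma M\otimes\Phi^{-1}TN$ are precisely what is intended.
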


\par

\vskip0.2cm

\section{Short time existence of first order Dirac-harmonic map flows}\label{sec:dhf}
In this section, we assume that $M^m$ ($m\geq2$) is a compact Riemannian spin manifold with boundary $\partial M$ and choose  a fixed spin structure on $M$.
\par
 Let us consider the family of coupled system of differential equations for a map $\Phi:M\times[0,T]\To\R^q$ with $\Phi=(\Phi^A)$ and for a spinor field $\Psi:M\times[0,T]\To\Sigma M\otimes\Phi^{-1}T\R^q$  with $\Psi=(\Psi^A)$ along the map $\Phi$
\begin{gather}
\begin{cases}
\left(\dfrac{\partial}{\partial t}-\Delta\right)\Phi^A+\Omega^A_B\cdot\dif\Phi^B+\hin{\tilde\Omega^A_B}{\dif\Phi^B}=0,&M\times(0,T];\\
\dirac\Psi^A+\Omega^A_B\cdot\Psi^B=0,&\partial M\times[0,T]
\end{cases}\label{eq:DHF}
\end{gather}
with the initial and boundary conditions
\begin{gather}\label{eq:BDHF}
\begin{cases}
\Phi(x,t)=\phi(x,t),&(x,t)\in\partial M\times[0,T]\cup M\times\set{0};\\
\mathcal{B}\Psi=\mathcal{B}\psi,&\partial M\times[0,T],
\end{cases}
\end{gather}
where $\mathcal{B}$ is a chirality boundary operator.

The following two Lemmas are similar to the harmonic map heat flow (c.f. \cite{Eells1964harmonic,Hamilton1975harmonic,Li1991heat}).
\begin{lem}\label{lem:equivalent}
Suppose the image of $\Phi$ lies in $N$ and $\Psi$ is a spinor along the map $\Phi$, then $(\Phi,\Psi)$ satisfies the Dirac-harmonic map flow \eqref{eq:dhf}, i.e.,
\begin{equation*}
\begin{cases}
\partial_t\Phi=\tau(\Phi)-\mathcal{R}(\Phi,\Psi),\\
\D\Psi=0,
\end{cases}
\end{equation*}
if and only if $(\Phi,\Psi)$ satisfies \eqref{eq:DHF}.
\end{lem}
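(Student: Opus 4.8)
This lemma is the parabolic counterpart of the equivalence, established in \autoref{prop:EL} and \autoref{rem:2.5}, between the intrinsic Euler--Lagrange system of $L$ and its extrinsic reformulation after the isometric embedding $N\hookrightarrow\R^q$. The only new feature compared with the static case is the time derivative $\partial_t\Phi$, and it causes no trouble: since $\Phi(\cdot,t)$ takes values in $N$ for every $t$, the curve $t\mapsto\Phi(x,t)$ lies in $N$, hence $\partial_t\Phi(x,t)\in T_{\Phi(x,t)}N$ automatically, i.e. $\nu^A_B(\Phi)\,\partial_t\Phi^B=0$. The plan is therefore to reduce the equivalence to two identities, valid pointwise in $t$, that are the time-frozen versions of the computations already carried out in the proofs of \autoref{prop:EL} and \autoref{rem:2.5}, together with the remark above on $\partial_t\Phi$.

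For the spinor part, recall that ``$\Psi$ is a spinor along $\Phi$'' means $\nu^A_B(\Phi)\Psi^B=0$. By the computation in the proof of \autoref{prop:EL}, the intrinsic equation $\D\Psi=0$ on $\Sigma M\otimes\Phi^{-1}TN$ is equivalent to
\begin{equation*}
\dirac\Psi^A=\pi^A_{BC}(\Phi)\,\nabla\Phi^B\cdot\Psi^C,
\end{equation*}
and by the rewriting in the proof of \autoref{rem:2.5}, using $\nu(\Phi)\nabla\Phi=0$, $\nu(\Phi)\Psi=0$ and the symmetries \eqref{eq:pi}, the right-hand side equals $-\Omega^A_B\cdot\Psi^B$ with $\Omega^A_B=[\nu(\Phi),\dif\nu(\Phi)]^A_B$. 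Hence $\D\Psi=0$ is equivalent to $\dirac\Psi^A+\Omega^A_B\cdot\Psi^B=0$, which is the second line of \eqref{eq:DHF}.

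For the map part, decompose the Euclidean Laplacian along $\Phi$ into its $TN$-part and its normal part. Differentiating $\nu^A_B(\Phi)\nabla\Phi^B=0$ shows that the tangential part of $\Delta\Phi^A$ is the tension field, so for any $\Phi$ with image in $N$,
\begin{equation*}
\tau(\Phi)^A=\Delta\Phi^A-\pi^A_{BC}(\Phi)\hin{\nabla\Phi^B}{\nabla\Phi^C},
\end{equation*}
while $\nu(\Phi)\Psi=0$ identifies $\mathcal{R}(\Phi,\Psi)^A$ with $\pi^A_B(\Phi)\pi^C_{BD}(\Phi)\pi^C_{EF}(\Phi)\rin{\Psi^D}{\nabla\Phi^E\cdot\Psi^F}$. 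Feeding these into the rewriting of \autoref{rem:2.5}, which expresses $\pi^A_{BC}(\Phi)\hin{\nabla\Phi^B}{\nabla\Phi^C}$ as $\Omega^A_B\cdot\dif\Phi^B$ and $\pi^A_B(\Phi)\pi^C_{BD}(\Phi)\pi^C_{EF}(\Phi)\rin{\Psi^D}{\nabla\Phi^E\cdot\Psi^F}$ as $\hin{\tilde\Omega^A_B}{\dif\Phi^B}$, one obtains
\begin{equation*}
\tau(\Phi)^A-\mathcal{R}(\Phi,\Psi)^A=\Delta\Phi^A-\Omega^A_B\cdot\dif\Phi^B-\hin{\tilde\Omega^A_B}{\dif\Phi^B}.
\end{equation*}
Thus $\partial_t\Phi=\tau(\Phi)-\mathcal{R}(\Phi,\Psi)$ is equivalent to $(\partial_t-\Delta)\Phi^A+\Omega^A_B\cdot\dif\Phi^B+\hin{\tilde\Omega^A_B}{\dif\Phi^B}=0$, the first line of \eqref{eq:DHF}. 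The converse implication is obtained by reading the same identities backwards, the hypotheses that $\Phi$ has image in $N$ and that $\Psi$ is a spinor along $\Phi$ being used exactly as above and $\partial_t\Phi\in TN$ being automatic.

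The computation involves no analytic difficulty; the only point requiring care — the ``main obstacle'', such as it is — is the bookkeeping that guarantees the cancellation of all the components normal to $N$, so that the intrinsically defined quantities $\tau(\Phi)-\mathcal{R}(\Phi,\Psi)$ and $\D\Psi$ match precisely the indicated extrinsic combinations. This is nothing more than the time-frozen version of the cancellations already verified in the proofs of \autoref{prop:EL} and \autoref{rem:2.5}, and needs no new input.
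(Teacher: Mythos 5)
Your proposal is correct and follows exactly the approach the paper has in mind; the paper's own proof consists of the single line ``A well-known computation,'' and you have simply supplied the standard details. In particular you correctly isolate the only genuinely new point beyond the static case of \autoref{prop:EL} and \autoref{rem:2.5}, namely that $\Phi(M\times[0,T])\subset N$ forces $\partial_t\Phi\in T_\Phi N$, so the extrinsic time derivative agrees with the intrinsic one and the remaining identifications $\tau(\Phi)^A=\Delta\Phi^A-\pi^A_{BC}(\Phi)\hin{\nabla\Phi^B}{\nabla\Phi^C}$, $\mathcal{R}(\Phi,\Psi)^A=\hin{\tilde\Omega^A_B}{\dif\Phi^B}$ and $(\D\Psi=0)\Leftrightarrow(\dirac\Psi^A+\Omega^A_B\cdot\Psi^B=0)$ are the time-frozen cancellations already verified in the paper.
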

\begin{proof}A well-known computation.
\end{proof}
\begin{lem}\label{lem:stational}Suppose that $(\Phi,\Psi)$ is a solution of \eqref{eq:DHF} which is continuous on $M\times[0,T]$ with $\phi(x,t)\in N$ for all $(x,t)\in\partial M\times[0,T]\cup M\times\set{0}$ and $\psi$ is a spinor along the map $\phi\vert_{\partial M}$ for all time $[0,T]$. Suppose $\Phi(x,t)\in\tilde N$ on $M\times(0,T]$, then $\Phi(x,t)\in N$ for all $(x,t)\in M\times[0,T]$ and $\Psi(\cdot,t)$ is a spinor along the map $\Phi(\cdot,t)$ for all time $t\in[0,T]$. In fact, $\tilde\Psi^A=\nu^A_B\Psi^B$ satisfies the following Dirac-type equation
\begin{equation*}
\begin{cases}
\dirac\tilde\Psi^A+\Omega^A_B\cdot\tilde\Psi^B=0,& M;\\
\mathcal{B}\tilde\Psi=0,&\partial M.
\end{cases}
\end{equation*}
\end{lem}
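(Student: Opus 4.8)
The plan is to follow the classical strategy for the harmonic map heat flow (cf.\ \cite{Eells1964harmonic,Hamilton1975harmonic,Li1991heat}), splitting the argument into two steps. First I would show that the flow $\Phi$ never leaves $N$; then, using this, I would show that the section $\tilde\Psi^A\coloneqq\nu^A_B(\Phi)\Psi^B$ solves, for each fixed time $t$, a homogeneous Dirac-type boundary value problem of the kind covered by \autoref{thm:dirac-omega}, so that it vanishes identically by uniqueness --- which is exactly the asserted conclusion.

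\textbf{Step 1: the map stays in $N$.} I would set $\rho(x,t)\coloneqq\Dist{\Phi(x,t)}{N}^2=\abs{\Phi(x,t)-\pi(\Phi(x,t))}^2$, the squared Euclidean distance of $\Phi$ to $N\subset\R^q$. Since $\Phi$ is of class $C^{2,1,\alpha}$ on $M\times(0,T]$ with values in the bounded tubular neighborhood $\tilde N$, on which $\Dist{\cdot}{N}^2$ is smooth, $\rho$ is nonnegative, of class $C^{2,1,\alpha}$ on $M\times(0,T]$, continuous on $M\times[0,T]$, and vanishes on the parabolic boundary $\partial M\times[0,T]\cup M\times\set{0}$, because $\phi$ maps that set into $N$. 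Differentiating $\rho$ along the flow \eqref{eq:DHF}, the standard computation for the harmonic map heat flow goes through: the normal second-fundamental-form contributions coming from the first equation of \eqref{eq:DHF} cancel against those produced by $\Delta(\pi\circ\Phi)$, while the fermionic nonlinearity $\hin{\tilde\Omega^A_B}{\dif\Phi^B}$ is tangential to $N$ along $N$ and, on $\bar M\times[\eps,T]$ for any $\eps>0$, is controlled by the uniform bounds on $\abs{\dif\Phi}$ and $\abs{\Psi}$ there; this yields a differential inequality
\begin{equation*}
\left(\partial_t-\Delta\right)\rho\leq a(x,t)\,\rho\quad\text{on }\bar M\times[\eps,T],
\end{equation*}
with $a$ bounded on $\bar M\times[\eps,T]$. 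Since $\rho$ vanishes on $\partial M\times[\eps,T]$ and $\rho(\cdot,\eps)\leq\delta(\eps)\coloneqq\sup_{\bar M}\rho(\cdot,\eps)$, where $\delta(\eps)\to0$ as $\eps\to0$ by the uniform continuity of $\rho$ on $\bar M\times[0,T]$ together with $\rho(\cdot,0)=0$, the parabolic maximum principle gives $\rho\leq\delta(\eps)\me^{\norm{a}_{L^\infty}T}$ on $\bar M\times[\eps,T]$; letting $\eps\to0$ forces $\rho\equiv0$, i.e.\ $\Phi(x,t)\in N$ for all $(x,t)\in M\times[0,T]$.

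\textbf{Step 2: the spinor stays along the map.} For $t>0$ one has $\Phi(\cdot,t)\in C^{2,\alpha}(\bar M;N)\subset W^{1,2p^*}(M;N)$, while $\Phi(\cdot,0)=\phi(\cdot,0)\in C^{2,\alpha}(\bar M;N)$; hence $\Omega^A_B(\cdot,t)=[\nu(\Phi(\cdot,t)),\dif\nu(\Phi(\cdot,t))]^A_B$ satisfies $\Omega\in L^{2p^*}(M)$ and $\dif\Omega\in L^{p^*}(M)$ for every $t\in[0,T]$. Since $\Phi(\cdot,t)$ takes values in $N$, the identities $\nu^A_B(\Phi)\dif\Phi^B=0$ hold, so $\tilde\Psi^A\coloneqq\nu^A_B(\Phi)\Psi^B$ is a well-defined $W^{1,p}$ section, and I would run the computation carried out at the end of the proof of \autoref{thm:main-dirac} verbatim --- now with vanishing inhomogeneous term, using the Dirac equation $\dirac\Psi^A+\Omega^A_B\cdot\Psi^B=0$ of \eqref{eq:DHF} and $\nu^A_B(\Phi)\eta^B=0$ --- to obtain
\begin{equation*}
\dirac\tilde\Psi^A+\Omega^A_B\cdot\tilde\Psi^B=0\quad\text{in }M,
\end{equation*}
together with $\mathcal{B}\tilde\Psi^A=\nu^A_B(\Phi)\mathcal{B}\Psi^B=\nu^A_B(\phi)\mathcal{B}\psi^B=0$ on $\partial M$, the last equality holding because $\psi(\cdot,t)$ is a spinor along $\phi(\cdot,t)\vert_{\partial M}$, which takes values in $N$. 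Then, for each $t\in[0,T]$, $\tilde\Psi(\cdot,t)$ solves a homogeneous boundary value problem to which \autoref{thm:dirac-omega} applies (the relevant operator being the spin Dirac operator $\dirac$, i.e.\ $\Gamma=0$, with $\eta=0$ and trivial boundary data), and its uniqueness assertion forces $\tilde\Psi(\cdot,t)=0$. Thus $\Psi(\cdot,t)$ is a spinor along $\Phi(\cdot,t)$ for every $t\in[0,T]$, and $\tilde\Psi^A$ satisfies the stated Dirac-type boundary value problem.

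I expect the main obstacle to be Step 1, namely establishing the parabolic differential inequality for $\rho=\Dist{\Phi}{N}^2$: one must track the contribution of the nonlinearities $\Omega^A_B\cdot\dif\Phi^B$ and $\hin{\tilde\Omega^A_B}{\dif\Phi^B}$ of \eqref{eq:DHF} against $\nabla\rho$ and against the terms from $\Delta(\pi\circ\Phi)$, which uses the geometry of the tubular neighborhood $\tilde N$ and the structure of the extended nearest-point projection $\pi$, just as for the classical harmonic map heat flow --- the only genuinely new feature being the fermionic term, handled via the uniform bound on $\Psi$ over the sets $\bar M\times[\eps,T]$. Step 2 should then be routine, being essentially the same algebraic manipulation as in the proof of \autoref{thm:main-dirac} combined with the uniqueness in \autoref{thm:dirac-omega}.
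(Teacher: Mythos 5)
Your proposal takes essentially the same approach as the paper: Step 1 computes $(\partial_t-\Delta)$ applied to the squared distance $\abs{\Phi-\pi(\Phi)}^2$ and derives a parabolic differential inequality (the paper's $(\partial_t-\Delta)\varphi\leq c\varphi$) closed via the maximum principle, and Step 2 shows $\tilde\Psi^A=\nu^A_B\Psi^B$ solves the homogeneous Dirac problem and invokes the uniqueness of \autoref{thm:dirac-omega}. The $\eps$-regularization in your Step 1 is a mild technical refinement of the paper's one-line maximum-principle argument and does not change the substance.
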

\begin{proof}
For $z\in\R^q$, let us define $\rho:\R^q\To\R^q$ by $\rho(z)=z-\pi(z)$. Consider
\begin{equation*}
\varphi(x,t)=\norm{\rho(\Phi(x,t))}^2=\sum_{A=1}^q\norm{\rho^A(\Phi(x,t))}^2.
\end{equation*}
We can get that
\begin{equation*}
\begin{split}
&\left(\dfrac{\partial}{\partial t}-\Delta\right)\varphi(x,t)=-2\norm{\nabla\rho(\Phi(x,t))}^2+2\hin{\partial_t\rho-\Delta\rho}{\rho}\\
=&-2\norm{\nabla\rho(\Phi(x,t))}^2+2\hin{\nu^A_B\left(\partial_t\Phi^B-\Delta\Phi^B\right)+\pi^A_{BC}\hin{\nabla\Phi^B}{\nabla\Phi^C}}{\rho^A}\\
=&-2\norm{\nabla\rho(\Phi(x,t))}^2-2\hin{\nu^A_B(\Phi)\left(\Omega(\Phi)^B_C\cdot\dif\Phi^C+\hin{\tilde\Omega(\Phi)^B_C}{\dif\Phi^C}\right)-\pi^A_{BC}(\Phi)\hin{\nabla\Phi^B}{\nabla\Phi^C}}{\rho(\Phi)^A}.
\end{split}
\end{equation*}
Notice that $\pi^A_B$ is a projection when restricted on $N$, then we obtain that restricted on $N$,
\begin{align*}
\nu^A_B(\Phi)\left(\Omega(\Phi)^B_C\cdot\dif\Phi^C+\hin{\tilde\Omega(\Phi)^B_C}{\dif\Phi^C}\right)-\pi^A_{BC}(\Phi)\hin{\nabla\Phi^B}{\nabla\Phi^C}=0.
\end{align*}
By mean value theorem, it follows that
\begin{align*}
-2\hin{\nu^A_B(\Phi)\left(\Omega(\Phi)^B_C\cdot\dif\Phi^C+\hin{\tilde\Omega(\Phi)^B_C}{\dif\Phi^C}\right)-\pi^A_{BC}(\Phi)\hin{\nabla\Phi^B}{\nabla\Phi^C}}{\rho(\Phi)^A}\leq c\varphi.
\end{align*}
Therefore, we have
\begin{equation*}
\left(\dfrac{\partial}{\partial t}-\Delta\right)\varphi\leq c\varphi.
\end{equation*}
Since $\varphi\geq0$ and $\varphi=0$ on $\partial M\times[0,T]\cup M\times\set{0}$, we have that $\varphi=0$ on $M\times[0,T]$. Hence $\Phi(x,t)\in N$ for all $(x,t)\in M\times[0,T]$ according to the maximum principle.
\par
Next we show that $\Psi$ is a spinor along the map $\Phi$. In order to do this, we consider $\tilde\Psi^A=\nu^A_B\Psi^B$, then
\begin{equation*}
\begin{split}
\dirac\tilde\Psi^A=&\nu^A_B\dirac\Psi^B+\nabla\nu^A_B\cdot\Psi^B=-\nu^A_B\nabla\nu^B_C\cdot\Psi^C+\nabla\nu^A_B\cdot\Psi^B\\
=&\nabla\nu^A_B\cdot\tilde\Psi^B=-\nabla\pi^A_B\cdot\tilde\Psi^B\\
=&\left(\nabla\pi^A_C\pi^C_B-\pi^A_C\nabla\pi^C_B\right)\cdot\tilde\Psi^B=-\Omega^A_B\cdot\tilde\Psi^B.
\end{split}
\end{equation*}
Moreover, $\tilde\Psi^A$ satisfies the following boundary conditions
\begin{equation*}
\mathcal{B}\tilde\Psi^A=0
\end{equation*}
for all time $t\in[0,T]$. By the uniqueness of solutions of Dirac equations with chiral boundary values, see \autoref{thm:dirac-omega}, we get that $\tilde\Psi=0$, i.e., $\Psi$ is a spinor along the map $\Phi$.
\end{proof}
 To state the short time existence for the Dirac-harmonic map flow, we first recall some basic facts of heat kernels on Riemannian manifolds. An important property is that the heat kernel is almost Euclidean \cite{Chavel1984eigenvalues,Li2012geometric}. In other words, if $p$ is a heat kernel, then $p$ and $\mathcal{E}$ are of the same order, locally uniformly in $(x,y)$ as $t\to 0_+$, and  a similar statement holds for the first derivatives of $p$ and $\mathcal{E}$, where
 \begin{equation*}
 \mathcal{E}(x,y,t)=(4\pi t)^{-m/2}e^{-\dist^2(x,y)/(4t)}.
 \end{equation*}
 One can show that the Dirichlet heat kernel $h(x,y,t)$ is also almost Euclidean \cite{Chavel1984eigenvalues}, hence
 \begin{gather*}
 h(x,y,t)\leq ct^{-m/2}e^{-\dist^2(x,y)/(4t)},\\
 \intertext{and}
 \norm{\nabla h(x,y,t)}\leq ct^{-m/2-1}e^{-\dist^2(x,y)/(4t)}\dist(x,y).
 \end{gather*}
 We summarize these properties in
 \begin{lem}[See \cite{Chavel1984eigenvalues,Jost2013partial}]\label{lem:heat-kernel}For ever $\beta>0$, there exists a constant $c=c(\beta)$ such that
 \begin{gather*}
h(x,y,t)\leq c(\beta) t^{-m/2+\beta}\dist(x,y)^{-2\beta},\\
\norm{\nabla h(x,y,t)}\leq c(\beta)t^{-m/2-1+\beta}\dist(x,y)^{1-2\beta},
\intertext{and}
\norm{\nabla_{\vect{n}}h(x,y,t)}\leq c(\beta)t^{-m/2-1+\beta}\dist(x,y)^{2-2\beta},\quad x,y\in\partial M,
\end{gather*}
as $t\to 0_+$.
 \end{lem}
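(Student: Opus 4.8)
The plan is to deduce all three estimates from the ``almost Euclidean'' Gaussian bounds for the Dirichlet heat kernel $h$ recalled just above (see \cite{Chavel1984eigenvalues}),
\[
h(x,y,t)\le c\,t^{-m/2}e^{-\dist^{2}(x,y)/(4t)},\qquad
\norm{\nabla h(x,y,t)}\le c\,t^{-m/2-1}e^{-\dist^{2}(x,y)/(4t)}\,\dist(x,y),
\]
combined with the elementary fact that for every $\beta>0$ one has $C(\beta):=\sup_{s\ge 0}s^{\beta}e^{-s/4}<\infty$. Writing $d:=\dist(x,y)$ and applying this with $s=d^{2}/t$ converts the Gaussian factor into a power of $d$: since
\[
t^{-m/2}e^{-d^{2}/(4t)}=t^{-m/2+\beta}\,d^{-2\beta}\cdot(d^{2}/t)^{\beta}e^{-d^{2}/(4t)}\le C(\beta)\,t^{-m/2+\beta}\,d^{-2\beta},
\]
the first estimate follows, and the identical factorisation applied to the gradient bound --- now extracting $t^{-m/2-1+\beta}d^{1-2\beta}$ --- gives the second.

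For the third estimate I would first note that, since $h(\cdot,y,t)$ satisfies the Dirichlet condition along $\partial M$, its gradient at a boundary point $x$ is purely normal, so $\norm{\nabla_{\vect{n}}h(x,y,t)}=\norm{\nabla h(x,y,t)}$ and the generic bound already gives $\norm{\nabla_{\vect{n}}h(x,y,t)}\le c\,t^{-m/2-1}e^{-d^{2}/(4t)}\,d$. The extra power of $d$ demanded by the statement comes from using that $y$, too, lies on $\partial M$: in Fermi (boundary normal) coordinates the method of images realises $h$ as $\mathcal{E}(x,y,t)$ minus its reflection across $\partial M$, and a direct computation shows that the normal derivative of this difference vanishes identically whenever both $x$ and $y$ lie on $\partial M$. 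Hence on the manifold the restriction of $\nabla_{\vect{n}}h$ to $\partial M\times\partial M$ is carried entirely by the lower-order curvature corrections of the parametrix, which obey $\norm{\nabla_{\vect{n}}h(x,y,t)}\le c\,t^{-m/2-1}e^{-d^{2}/(4t)}\,d^{2}$ for $x,y\in\partial M$; feeding this into $s^{\beta}e^{-s/4}\le C(\beta)$ with $s=d^{2}/t$, now extracting $t^{-m/2-1+\beta}d^{2-2\beta}$, yields the third estimate.

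The algebraic conversion of Gaussian decay into polynomial-in-$\dist$ decay is entirely routine. The one point that requires genuine care --- and the expected main obstacle --- is the improved order of vanishing of $\nabla_{\vect{n}}h$ on the diagonal of $\partial M\times\partial M$, which is precisely where the Dirichlet boundary condition gets used in both variables; this can be read off from the standard parametrix construction for the Dirichlet heat kernel near $\partial M$, or quoted directly from \cite{Chavel1984eigenvalues,Jost2013partial}.
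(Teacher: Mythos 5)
Your treatment of the first two estimates is exactly the paper's argument: quote the almost-Euclidean Gaussian bounds for $h$ and $\nabla h$, then absorb the Gaussian factor $e^{-d^2/(4t)}$ into the prefactor using the elementary inequality $s^\beta e^{-s/4}\le C(\beta)$ with $s=d^2/t$. The paper states precisely this one-line reduction.

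For the third estimate the paper offers only the cryptic remark that ``the improvement in the exponent of $\dist(x,y)$ in the third inequality is due to the fact that the derivative is in the direction normal to the boundary,'' and your attempt to flesh this out is in the right spirit but contains a detour and a small inaccuracy. You correctly observe that at $x\in\partial M$ the gradient of $h(\cdot,y,t)$ is purely normal, and you correctly check in the flat half-space model (via images) that $\nabla_{\vect{n}}h$ vanishes once $y$ also lies on $\partial M$. Where the argument wobbles is the claim that ``on the manifold the restriction of $\nabla_{\vect{n}}h$ to $\partial M\times\partial M$ is carried entirely by the lower-order curvature corrections of the parametrix, which obey $\norm{\nabla_{\vect{n}}h}\le c\,t^{-m/2-1}e^{-d^2/(4t)}d^2$.'' In fact no analysis of parametrix remainders is needed: by the symmetry $h(x,y,t)=h(y,x,t)$ of the Dirichlet heat kernel together with the Dirichlet condition, $h(x,y,t)=0$ for \emph{all} $x\in\bar M$ as soon as $y\in\partial M$, and hence every $x$-derivative of $h$, in particular $\nabla_{\vect{n}_x}h$, vanishes identically on $\partial M\times\partial M$. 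So the quantity you are bounding is exactly zero there, and the stated inequality holds trivially; the heuristic that ``curvature corrections are $O(d^2)$'' is not what is doing the work, and if you actually needed a nontrivial quadratic bound your sketch would not supply one without a substantially more careful parametrix estimate. The clean symmetry observation replaces your parametrix argument, closes the gap, and is more in keeping with the paper's minimalist proof. Everything else in the proposal is fine.
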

 \begin{proof}It is a consequence of the following inequality
 \begin{equation*}
 x^{\beta}e^{-x}\leq\beta^{\beta}e^{-\beta},\quad\forall x>0,\beta>0.
 \end{equation*}
 The improvement in the exponent of $\dist(x,y)$ in the third inequality is due to the fact that the derivative is in the direction normal to the boundary $\partial M$.
 \end{proof}
Now we can give a proof of the main \autoref{thm:main}. For the short time existence of the harmonic map heat follow, we refer the reader to \cite{Eells1964harmonic,Hamilton1975harmonic, Li1991heat, Lin2008analysis}.
\begin{proof}[\textbf{Proof of \autoref{thm:main}}] We will split the proof into four steps.
\begin{enumerate}[Step I]
\vspace{3ex}
\item\textbf{Short time existence for the flow \eqref{eq:DHF} and \eqref{eq:BDHF}}.
\vspace{3ex}
\par
Let $h(x,y,t)$ be the Dirichlet heat kernel of $M$. Define an operator $T$ by
\begin{equation*}
\begin{split}
&Tu(x,t)=u_0(x,t)-\int_0^t\int_Mh(x,y,t-\tau)\left(\Omega(u)\cdot\dif u+\hin{\tilde\Omega(u,\Psi(u))}{\dif u}\right)(y,\tau)\diff y\diff\tau,
\end{split}
\end{equation*}
where
\begin{equation*}
u_0(x,t)=\int_Mh(x,y,t)\phi(y,0)\diff y-\int_0^t\int_{\partial M}\dfrac{\partial h}{\partial\vect{n}_y}(x,y,t-\tau)\phi(y,\tau)\diff\sigma(y)\diff\tau.
\end{equation*}
Here
\begin{gather*}
\Omega(u)=[\nu(u),\dif\nu(u)],
\intertext{and}
\tilde\Omega(u,\Psi(u))=\dfrac12R^A_{BCD}(u)\rin{\Psi(u)^C}{e_i\cdot\Psi(u)^D}\eta^i\eqqcolon\mathcal{R}(u)(\Psi(u),\Psi(u)),
\end{gather*}
where $\Psi(u)$ is the unique solution of
\begin{equation*}
\begin{cases}
\dirac\Psi^A=-\Omega(u)^A_B\cdot\Psi^B,&M;\\
\mathcal{B}\Psi^A=\mathcal{B}\psi^A,&\partial M
\end{cases}
\end{equation*}
according to \autoref{thm:dirac-omega}.
\par
It is clear that $u_0$ is the unique solution of
\begin{equation*}
\begin{cases}
\dfrac{\partial}{\partial t}u_0=\Delta u_0,&M\times(0,\infty);\\
u_0=\phi,&\partial M\times[0,\infty)\cup M\times\set{0}.
\end{cases}
\end{equation*}
For every $\varepsilon>0$ and each $u\in\cap_{0<t<\varepsilon}C^{1,0,0}(\bar M\times[t,\varepsilon])\cap C^0(\bar M_{\varepsilon})$, define the norm
\begin{equation*}
\norm{u}\coloneqq\norm{u}_{C^0(\bar M\times[0,\varepsilon])}+\sup_{t\in[0,\varepsilon]}\norm{\nabla u(\cdot,t)}_{C^0(\bar M)}.
\end{equation*}
Let $X^{\varepsilon}_{\phi}$ be the completion of the following subset of $C^0(\bar M_{\varepsilon})$
\begin{equation*}
\set{u\in\cap_{0<t<\varepsilon}C^{1,0,0}(\bar M\times[t,\varepsilon])\cap C^0(\bar M_{\varepsilon}):u=\phi\ \text{on}\ \mathcal{P}M_{\varepsilon}}.
\end{equation*}
Here
\begin{equation*}
M_{\varepsilon}\coloneqq M\times(0,\varepsilon],\quad\mathcal{P}M_{\varepsilon}\coloneqq\partial M\times[0,\varepsilon]\cup M\times\set{0}.
\end{equation*}
For $u\in X_{\phi}^{\varepsilon}$, according to \autoref{thm:dirac-omega} we have that for large $p$
\begin{equation*}
\begin{split}
\norm{\Psi(\cdot,t)}_{C^{\alpha}(\bar M)}\leq&c(p)\norm{\Psi(\cdot,t)}_{W^{1,p}(M)}\leq C(\norm{u})\norm{\mathcal{B}\Psi_0(\cdot,t)}_{W^{1-1/p,p}(\partial M)}\\
\leq&C(\norm{u})\norm{\mathcal{B}\Psi_0(\cdot,t)}_{C^{1,\alpha}(\partial M)}\leq C(\norm{u})\norm{\mathcal{B}\psi}_{C^{1,0,\alpha}(\partial M_{\varepsilon})}.
\end{split}
\end{equation*}
As a consequence,
\begin{equation*}
T:X^{\varepsilon}_{\phi}\To X^{\varepsilon}_{\phi}
\end{equation*}
is well defined. For $\delta>0$, let $B_{\delta}=\set{u\in X^{\varepsilon}_{\phi}:\norm{u-u_0}\leq\delta}$.
\par
According to \autoref{lem:heat-kernel}, for every $\beta>0$,
\begin{gather*}
h(x,y,t)\leq c(\beta) t^{\beta-m/2}\dist(x,y)^{-2\beta},\\
\norm{\nabla h(x,y,t)}\leq c(\beta)t^{\beta-1-m/2}\dist(x,y)^{1-2\beta},
\intertext{and}
\norm{\nabla_{\vect{n}}h(x,y,t)}\leq c(\beta)t^{\beta-1-m/2}\dist(x,y)^{2-2\beta},\quad x,y\in\partial M.
\end{gather*}
\par
\vspace{3ex}
\begin{enumerate}[1)]
\item $u_0\in X^{\varepsilon}_{\phi}$.
\par
\vspace{3ex}
Let $v_0=u_0-\phi$, then
\begin{equation*}
\begin{cases}
\partial_tv_0-\Delta v_0=\Delta\phi-\partial_t\phi\eqqcolon f,&M_T;\\
v_0=0,&\mathcal{P}M_T.
\end{cases}
\end{equation*}
Since $\phi\in C^{2,1,\alpha}(\bar M_T)$, we know that $f\in C^{0,0,\alpha}(\bar M_T)$ and
\begin{equation*}
\norm{f}_{C^{0}(\bar M_T)}\leq c\norm{\phi}_{C^{2,1,0}(\bar M_T)},\quad\norm{f}_{C^{0,0,\alpha}(\bar M_T)}\leq c(\alpha)\norm{\phi}_{C^{2,1,\alpha}(\bar M_T)}.
\end{equation*}
 Moreover, $v_0$ can be given in the following formula, for $(x,t)\in M_T$,
\begin{equation*}
v_0(x,t)=\int_0^t\int_Mh(x,y,t-\tau)f(y,\tau)\diff y\diff\tau.
\end{equation*}
The Schauder estimates imply that $v_0\in\cap_{0<t<T} C^{2,1,\alpha}(\bar M\times[t,T])\cap C^0(\bar M_T)$. The following estimates follow by straightforward computations.
\begin{gather*}
\abs{v_0(x,t)}\leq\norm{\phi}_{C^{2,1,0}(\bar M_{\varepsilon})}\varepsilon,\\
\abs{\nabla v_0(x,t)}\leq c(\beta)\norm{\phi}_{C^{2,1,0}(\bar M_{\varepsilon})}\varepsilon^{\beta-1},
\end{gather*}
for all $(x,t)\in M_{\varepsilon}$ and $\beta\in(m/2,(m+1)/2)$. In fact,
\begin{gather*}
\begin{split}
\abs{v_0(x,t)}\leq&\int_0^t\int_Mh(x,y,t-\tau)\abs{f(y,\tau)}\diff y\diff\tau\\
\leq&\norm{f}_{C^0(\bar M_t)}\int_0^t\int_Mh(x,y,t-\tau)\diff y\diff\tau\leq\norm{f}_{C^0(\bar M_t)}t,
\end{split}
\intertext{and}
\begin{split}
\abs{\nabla v_0(x,t)}\leq&\int_0^t\int_M\abs{\nabla_xh(x,y,t-\tau)}\abs{f(y,\tau)}\diff y\diff\tau\\
\leq&c(\beta)\norm{f}_{C^0(\bar M_t)}\int_0^t\int_M\abs{t-\tau}^{-2+\beta}\dist(x,y)^{1-2\beta}\diff y\diff\tau\leq c(\beta)\norm{f}_{C^0(\bar M_t)}t^{\beta-m/2}.
\end{split}
\end{gather*}
where $\beta\in(m/2,(m+1)/2)$.
Therefore $u_0\in X^{\varepsilon}_{\phi}$, and for $\delta,\varepsilon$ both small we have that $u_0\subset\tilde N$ if $\phi\subset N$.
\par
\vspace{3ex}
\item For $\varepsilon$ small, $T(B_{\delta})\subset B_{\delta}$.
\par
\vspace{3ex}
Since $u\in B_{\delta}$, we have $\norm{u}\leq C_1$.
\begin{gather*}
\norm{\Omega(u)}_{C^0(\bar M\times[0,\varepsilon])}=\norm{[\nu(u),\dif\nu(u)]}_{C^0(\bar M\times[0,\varepsilon])}\leq c\sup_{t\in[0,\varepsilon]}\norm{\nabla u(\cdot,t)}_{C^0(\bar M)}\leq c(C_1),
\intertext{and}
\begin{split}
\norm{\tilde\Omega(u,\Psi(u))}_{C^{0}(\bar M\times[0,\varepsilon])}\leq& c\sup_{t\in[0,\varepsilon]}\norm{\Psi(\cdot,t)}^2_{C^0(\bar M)}\\
\leq& c(C_1)\sup_{t\in[0,\varepsilon]}\norm{\mathcal{B}\psi(\cdot,t)}^2_{H^{1-1/p,p}(\partial M)}\\
\leq& c(C_1)\norm{\mathcal{B}\psi}^2_{C^{1,0,\alpha}(\bar\partial M\times[0,\varepsilon])}\leq c(C_1)C_2,
\end{split}
\end{gather*}
for $p$ large enough, and the second inequality has used \autoref{thm:dirac-omega}. Hence,
\begin{equation*}
\norm{\Omega(u)}_{C^0(\bar M_{\varepsilon})}+\norm{\tilde\Omega(u,\Psi(u))}_{C^0(\bar M_{\varepsilon})}\leq c(C_1,C_2),\forall u\in B_{\delta}.
\end{equation*}
As a consequence,
\begin{gather*}
\norm{Tu-u_0}\leq c(\beta)c(C_1,C_2)\norm{\dif u}_{C^0(\bar M_{\varepsilon})}\varepsilon^{\beta-m/2},
\end{gather*}
for $\beta\in(m/2,(m+1)/2)$.
\par
\vspace{3ex}
\item $\norm{Tu-Tv}\leq\dfrac12\norm{u-v}$ for $u,v\in B_{\delta}$ and $\varepsilon$ small.
\par
\vspace{3ex}
\par
First, we have,
\begin{equation*}
\begin{split}
Tu(x,t)-Tv(x,t)=&-\int_0^t\int_Mh(x,y,t-\tau)\left(\Omega(u)\cdot\dif u+\hin{\tilde\Omega(u,\Psi(u))}{\dif u}\right)(y,\tau)\diff y\diff\tau\\
&+\int_0^t\int_Mh(x,y,t-\tau)\left(\Omega(v)\cdot\dif v+\hin{\tilde\Omega(v,\Psi(v))}{\dif v}\right)(y,\tau)\diff y\diff\tau.
\end{split}
\end{equation*}
Moreover, notice that
\begin{gather*}
\begin{split}
\Omega(u)-\Omega(v)=&[\nu(u),\dif\nu(u)]-[\nu(v),\dif\nu(v)]\\
=&[\nu(u)-\nu(v),\dif\nu(u)]+[\nu(v),\dif\nu(u)-\dif\nu(v)],
\end{split}
\intertext{and}
\begin{split}
&\tilde\Omega(u,\Psi(u))-\tilde\Omega(v,\Psi(v))\\
=&\mathcal{R}(u)(\Psi(u),\Psi(u))-\mathcal{R}(v)(\Psi(v),\Psi(v))\\
=&\left(\mathcal{R}(u)-\mathcal{R}(v)\right)(\Psi(u),\Psi(u))+\mathcal{R}(v)(\Psi(u)-\Psi(v),\Psi(u))+\mathcal{R}(v)(\Psi(v),\Psi(u)-\Psi(v)),
\end{split}
\end{gather*}
we have
\begin{gather*}
\norm{\Omega(u)-\Omega(v)}_{C^0(\bar M_{\varepsilon})}\leq c(C_1,C_2)\norm{u-v},
\intertext{and}
\begin{split}
\norm{\tilde\Omega(u,\Psi(u))-\tilde\Omega(v,\Psi(v))}_{C^0(\bar M_{\varepsilon})}\leq &c(C_1,C_2)\left(\norm{u-v}+\norm{\Psi(u)-\Psi(v)}_{C^0(\bar M\times[0,\varepsilon])}\right)\\
\leq& c(C_1,C_2)\norm{u-v}.
\end{split}
\end{gather*}
The last inequality follows from the following fact
\begin{equation*}
\begin{cases}
\dirac\left(\Psi(u)^A-\Psi(v)^A\right)=-\Omega(u)^A_B\cdot(\Psi(u)^B-\Psi(v)^B)+\left(\Omega(u)^A_B-\Omega(v)^A_B\right)\cdot\Psi(v)^B,&M;\\
\mathcal{B}(\Psi(u)-\Psi(v))=0,&\partial M.
\end{cases}
\end{equation*}
And \autoref{thm:dirac-omega} implies that for large $p$
\begin{equation*}
\begin{split}
\norm{\Psi(u)-\Psi(v)}_{C^{\alpha}(\bar M)}\leq &c(p)\norm{\Psi(u)-\Psi(v)}_{W^{1,p}(M)}\leq c(p,C_1,C_2)\norm{u-v}\norm{\Psi(v)}_{L^p(E)}\\
\leq& c(p,C_1,C_2)\norm{u-v}.
\end{split}
\end{equation*}
Thus,
\begin{equation*}
\begin{split}
&\norm{\Omega(u)\cdot\dif u+\hin{\tilde\Omega(u,\Psi(u))}{\dif u}-\Omega(v)\cdot\dif v-\hin{\tilde\Omega(v,\Psi(v))}{\dif v}}_{C^{\alpha}(\bar M)}\leq c(p,C_1,C_2)\norm{u-v}.
\end{split}
\end{equation*}
Using a similar argument for $v_0$, one gets that
\begin{equation*}
\norm{Tu-Tv}\leq c(\beta)c(C_1,C_2)\varepsilon^{\beta-m/2}\norm{u-v},
\end{equation*}
where $\beta\in(m/2,(m+1)/2)$.
\end{enumerate}
Therefore, there exists a fixed point of $T$ in $B_{\delta}$, i.e., we have proved the short time existence of \eqref{eq:DHF} and \eqref{eq:BDHF}.
\par
\vspace{3ex}
\item\textbf{Regularity.}
\par
\vspace{3ex}
Let $(\Phi,\Psi)$ be the solution of \eqref{eq:DHF} and \eqref{eq:BDHF}  constructed above. The \autoref{thm:dirac-omega} implies that $\Psi\in L^{\infty}(\bar M_{\varepsilon})$ and hence $\Phi\in\cap_{0<t<\varepsilon} C^{1,0,\alpha}(\bar M\times[t,\varepsilon])\cap C(\bar M_{\varepsilon})$ by using the $L^p$-estimate for the heat equation. For every $0<t,\tau\leq\varepsilon$, we have
\begin{equation*}
\begin{cases}
\begin{split}
\dirac\left(\Psi^A(\cdot,t)-\Psi^A(\cdot,\tau)\right)=&-\Omega^A_B(\cdot,t)\cdot\left(\Psi^B(\cdot,t)-\Psi^B(\cdot,\tau)\right)+\left(\Omega^A_B(\cdot,\tau)-\Omega^A_B(\cdot,t)\right)\cdot\Psi^B(\cdot,\tau),&M;
\end{split}\\
\mathcal{B}\left(\Psi(\cdot,t)-\Psi(\cdot,\tau)\right)=\mathcal{B}\left(\psi(\cdot,t)-\psi(\cdot,\tau)\right), & \partial M.
\end{cases}
\end{equation*}
Again by using \autoref{thm:dirac-omega}, for larger $p$
\begin{equation*}
\norm{\Psi(\cdot,t)-\Psi(\cdot,\tau)}_{C^{\alpha}(\bar M)}\leq c(p,C_1,C_2)\abs{t-\tau}^{\alpha/2}.
\end{equation*}
Thus, $\Psi\in\cap_{0<t<\varepsilon}C^{0,0,\alpha}(\bar M\times[t,\varepsilon])\cap C^0(\bar M_{\varepsilon})$. The Schauder estimate for the heat equation implies that $\Phi\in\cap_{0<t<\varepsilon} C^{2,1,\alpha}(\bar M\times[t,\varepsilon])\cap C^0(\bar M_{\varepsilon})$. The interior Schauder estimate for the Dirac equation implies that $\Psi(\cdot,t)\in C^{2,\alpha}(M)$ for every $t\leq\varepsilon$. If one uses the boundary Schauder estimate, one can get that $\Psi(\cdot,t)\in C^{1,\alpha}(\bar M)$ for $t\leq\varepsilon$.
\par
Suppose that
\begin{equation*}
\limsup_{t<T_1,t\to T_1}\norm{\dif\Phi(\cdot,t)}_{C^0(\bar M)}<\infty,
\end{equation*}
the discussion above implies that this flow can be extended to a larger time $T_1'>T_1$, hence $T_1$ is not the maximum time which is a contradiction.
\par
\vspace{3ex}
\item\textbf{Uniqueness.}
\par
\vspace{3ex}
Finally, we state the uniqueness. Suppose that $(\Phi_i,\Psi_i)$ are solutions of \eqref{eq:DHF} and \eqref{eq:BDHF}. Let $u=\Phi_1-\Phi_2,\eta=\Psi_1-\Psi_2$, then
\begin{gather*}
\abs{\partial_tu-\Delta u}\leq C\abs{\nabla u}+C\abs{u}+C\abs{\eta},
\intertext{and}
\abs{\dirac\eta+\Omega_1\cdot\eta}\leq C\abs{\nabla u}+C\abs{u}.
\end{gather*}
Hence, applying \autoref{thm:dirac-omega} and using the same computation as for $v_0$, we get that
\begin{gather*}
\norm{\eta(\cdot,t)}_{C^0(\bar M)}\leq C\norm{u},\\
\intertext{and}
\norm{u}\leq C\norm{u}\varepsilon^{\beta-m/2}
\end{gather*}
holds for $0<\varepsilon\leq T_1$ and $\beta\in(m/2,(m+1)/2)$, where
$\norm{\cdot}$ is the norm corresponding to $M_{\varepsilon}$. Thus, if $\varepsilon$ is small, then $\norm{u}=0$, i.e., $u=0$ and hence $\eta=0$.
Then we can prove the uniqueness of the Dirac-harmonic heat flow by iteration.
\par
\vspace{3ex}
\item\textbf{Completion  of the proof.}
\par
\vspace{3ex}
We have actually proved that $\Phi\subset\tilde N$ if $\phi\subset N$. Therefore, we can use \autoref{lem:stational}. As a consequence, $\Phi(\cdot,t)\in N$ and $\Psi(\cdot,t)\in\Sigma M\times\Phi(\cdot,t)^{-1}TN$ for all $0\leq t<T_1$ since $\mathcal{B}\psi(\cdot,t)\in\left(\Sigma M\otimes\phi(\cdot,t)^{-1}TN\right)\vert_{\partial M}$ for all $t$. Then applying \autoref{lem:equivalent}, we finally complete the proof of this theorem.
\end{enumerate}
\end{proof}
\par

\vskip0.2cm

\section{Dirac equations along a map between Riemannian disks}\label{sec:dirac-disk}
In this section, we  discuss a Dirac equation along a smooth map $\phi:M=(D,\lambda\abs{\dif z}^2)\To N=(D,\rho\abs{\dif w}^2)$ where $D=\set{\abs{z}<1}$ is the open unit disk on $\Com$.  Let $\Sigma M$ be the spin bundle on $M$. Consider a Dirac bundle $\Sigma M\otimes\phi^{-1}TN$ and split this Dirac bundle as  (see \cite{Lawson1989spin,Yang2009structure})
\begin{equation*}
\begin{split}
&\Sigma M\otimes\phi^{-1}TN\\
=&\left(\Sigma^+M\otimes\phi^{-1}T^{1,0}N\right)\oplus\left(\Sigma^-M\otimes\phi^{-1}T^{1,0}N\right)\oplus\left(\Sigma^+M\otimes\phi^{-1}T^{0,1}N\right)\oplus\left(\Sigma^-M\otimes\phi^{-1}T^{0,1}N\right),
\end{split}
\end{equation*}
where
\begin{equation*}
\Sigma^+M=\set{\psi\in\Sigma M:\partial_{\bar z}\cdot\psi=0},\quad \Sigma^-M=\set{\psi\in\Sigma M:\partial_{z}\cdot\psi=0}.
\end{equation*}
\par
We identify the Clifford multiplication by the orthogonal bases $\partial_z,\partial_{\bar z}$ with the following matrices (see \cite{Lawson1989spin,Chen2013boundary}):
\begin{equation*}
\partial_z=\lambda^{1/2}
\begin{pmatrix}0&0\\
1&0
\end{pmatrix},
\quad
\partial_{\bar z}=\lambda^{1/2}
\begin{pmatrix}0&-1\\
0&0
\end{pmatrix}.
\end{equation*}
And the spinor $\psi\in\Sigma M\otimes\phi^{-1}TN$ can be written as
\begin{equation*}
\psi=
\begin{pmatrix}f^+\\
f^-
\end{pmatrix}\otimes\partial_{\phi}+
\begin{pmatrix}\tilde f^+\\
\tilde f^-
\end{pmatrix}\otimes\partial_{\bar\phi}
\end{equation*}
with
\begin{equation*}
\begin{pmatrix}1\\
0
\end{pmatrix}\in\Sigma^+M,\quad
\begin{pmatrix}0\\
1
\end{pmatrix}\in\Sigma^-M.
\end{equation*}
The connection on the spin bundle $\Sigma M$  is then given by the following operators (see \cite{Lawson1989spin})
\begin{gather*}
\nabla^{\Sigma M}_{\partial_z}=\dfrac{\partial}{\partial z}+\dfrac14\dfrac{\partial\log\lambda}{\partial z},\quad\nabla^{\Sigma M}_{\partial_{\bar z}}=\dfrac{\partial}{\partial\bar z}+\dfrac14\dfrac{\partial\log\lambda}{\partial\bar z}.
\end{gather*}
Therefore, the Dirac operator on $\Sigma M\otimes\phi^{-1}TN$ is of the following form
\begin{equation*}
\begin{split}
\D=&
2\lambda^{-1/2}\begin{pmatrix}
0&-\dfrac{\partial}{\partial z}-\dfrac14\dfrac{\partial\log\lambda}{\partial z}-\dfrac{\partial\log\rho}{\partial\phi}\dfrac{\partial\phi}{\partial z}\\
\dfrac{\partial}{\partial\bar z}+\dfrac14\dfrac{\partial\log\lambda}{\partial\bar z}+\dfrac{\partial\log\rho}{\partial\phi}\dfrac{\partial\phi}{\partial\bar z}&0
\end{pmatrix}\\
&\oplus2\lambda^{-1/2}\begin{pmatrix}
0&-\dfrac{\partial}{\partial z}-\dfrac14\dfrac{\partial\log\lambda}{\partial z}-\dfrac{\partial\log\rho}{\partial\bar\phi}\dfrac{\partial\bar\phi}{\partial z}\\
\dfrac{\partial}{\partial\bar z}+\dfrac14\dfrac{\partial\log\lambda}{\partial\bar z}+\dfrac{\partial\log\rho}{\partial\bar\phi}\dfrac{\partial\bar\phi}{\partial\bar z}&0
\end{pmatrix}.
\end{split}
\end{equation*}
The chiral boundary operator \cite{Chen2013maximum,Chen2013boundary,Hijazi2002eigenvalue} is given by
\begin{equation*}
\mathcal{B}^{\pm}=\dfrac12
\begin{pmatrix}1&\pm z^{-1}\\
\pm z&1
\end{pmatrix}\oplus
\dfrac12
\begin{pmatrix}1&\pm z^{-1}\\
\pm z&1
\end{pmatrix}.
\end{equation*}
\par
Now we consider the following Dirac equation
\begin{equation*}
\begin{cases}
\D\psi=0,&D;\\
\mathcal{B}^{\pm}\psi=\mathcal{B}^{\pm}\psi_0,&\partial D.
\end{cases}
\end{equation*}
As discussed above, $\D\psi=0$ is equivalent to the following systems
\begin{gather*}
f^+_{\bar z}+\left(\dfrac14(\log\lambda)_{\bar z}+(\log\rho)_{\phi}\phi_{\bar z}\right)f^+=0,\quad f^-_z+\left(\dfrac14(\log\lambda)_{z}+(\log\rho)_{\phi}\phi_z\right)f^-=0,
\intertext{and}
\tilde f^+_{\bar z}+\left(\dfrac14(\log\lambda)_{\bar z}+(\log\rho)_{\bar\phi}\bar\phi_{\bar z}\right)\tilde f^+=0,\quad \tilde f^-_z+\left(\dfrac14(\log\lambda)_{z}+(\log\rho)_{\bar\phi}\bar\phi_z\right)\tilde f^-=0,
\end{gather*}
where the spinor $\psi$ has the  form
\begin{equation*}
\psi=
\begin{pmatrix}f^+\\
f^-
\end{pmatrix}\otimes\partial_{\phi}+
\begin{pmatrix}\tilde f^+\\
\tilde f^-
\end{pmatrix}\otimes\partial_{\bar\phi}.
\end{equation*}
\par
Let $g$ be a solution of the  Riemannian-Hilbert problem
\begin{equation*}
\begin{cases}
g_{\bar z}=\dfrac14(\log\lambda)_{\bar z}+(\log\rho)_{\phi}\phi_{\bar z},&D;\\
\RE g=\log\left(\lambda^{1/4}\rho^{1/2}\right),&\partial D.
\end{cases}
\end{equation*}
All   solutions can be given in the following formulae [see \cite{Begehr1994complex}, p.71, Theorem 21.]
\begin{equation*}
\begin{split}
g(z)=&i\IM g(0)+\log\lambda^{1/4}(z)+\dfrac{1}{2\pi i}\int_{\partial D}\dfrac{\log\left(\rho(\phi(\zeta))^{1/2}\right)}{\zeta}\dfrac{\zeta+z}{\zeta-z}\dif\zeta\\
&+\dfrac{1}{4\pi i}\int_{D}\left(\dfrac{(\log\rho)_{\phi}\phi_{\bar \zeta}(\zeta)}{\zeta}\dfrac{\zeta+z}{\zeta-z}+\dfrac{(\log\rho)_{\bar\phi}\bar\phi_{\zeta}(\zeta)}{\bar\zeta}\dfrac{1+z\bar\zeta}{1-z\bar\zeta}\right)\dif\zeta\wedge\dif\bar\zeta,
\end{split}
\end{equation*}
for all $z\in D$. Then
\begin{gather*}
f^+_{\bar z}+g_{\bar z}f^+=0,\quad\overline{f^-}_{\bar z}+\left((\log\rho+\log\lambda^{1/2})_{\bar z}-g_{\bar z}\right)\overline{f^-}=0,
\intertext{and}
\tilde f^+_{\bar z}+\left((\log\rho+\log\lambda^{1/2})_{\bar z}-g_{\bar z}\right)\tilde f^+=0,\quad\overline{\tilde f^-}_{\bar z}+g_{\bar z}\overline{\tilde f^-}=0.
\end{gather*}
Therefore, there exist four holomorphic functions $A^+,A^-,\tilde A^+,\tilde A^-$ such that
\begin{gather*}
f^+=e^{-g}A^+,\quad f^-=\lambda^{-1/2}\rho^{-1}e^{\bar g}\overline{A^-},\quad\tilde f^+=\lambda^{-1/2}\rho^{-1}e^{g}\tilde A^+,\quad\tilde f^-=e^{-\bar g}\overline{\tilde A^-}.
\end{gather*}
\par
The chirality boundary condition $\mathcal{B}^{\pm}\psi=\mathcal{B}^{\pm}\psi_0$ now is equivalent to
\begin{gather*}
A^+\pm z^{-1}\overline{A^-}=\lambda^{1/4}\rho^{1/2}e^{i\IM g}\left(f^+_0\pm z^{-1}f^-_0\right),\quad\tilde A^+\pm z^{-1}\overline{\tilde A^-}=\lambda^{1/4}\rho^{1/2}e^{-i\IM g}\left(\tilde f^+_0\pm z^{-1}\tilde f^-_0\right),
\end{gather*}
for $z\in\partial D$, where
\begin{equation*}
\psi_0=
\begin{pmatrix}f^+_0\\
f^-_0
\end{pmatrix}\otimes\partial_{\phi}+
\begin{pmatrix}\tilde f^+_0\\
\tilde f^-_0
\end{pmatrix}\otimes\partial_{\bar\phi}.
\end{equation*}
\par
Since the index of $z^{-1}$  is $-1$, the solutions $A^+,A^-,\tilde A^+,\tilde A^-$ must be unique according to \autoref{thm:exist_holomorphic} (see \autoref{sec:app}). In particular, $f^+,f^-,\tilde f^+,\tilde f^-$ are independent  of the choice of $g$. In fact,  we can write any other choice of $g$ by $g+ic$ where $c$ is some real number. Then the solutions $A^+,A^-,\tilde A^+,\tilde A^-$ must be replaced by $e^{ic}A^+,e^{-ic}A^-,e^{-ic}\tilde A^+,e^{ic}\tilde A^-$ respectively. As a consequence, $f^+,f^-,\tilde f^+,\tilde f^-$ do not change.
\par
Next, we construct these solutions by using \autoref{thm:exist_holomorphic}. Denote
\begin{gather*}
F(z)\coloneqq\dfrac{1}{2\pi i}\int_{\partial D}\dfrac{\lambda^{1/4}(\zeta)\rho(\phi(\zeta))^{1/2}e^{i\IM g(\zeta)}(f^+_0(\zeta)\pm \zeta^{-1}f^-_0(\zeta))}{\zeta-z}\dif\zeta,\quad z\notin\partial D
\intertext{and}
\tilde F(z)\coloneqq\dfrac{1}{2\pi i}\int_{\partial D}\dfrac{\lambda^{1/4}(\zeta)\rho(\phi(\zeta))^{1/2}e^{-i\IM g(\zeta)}(\tilde f^+_0(\zeta)\pm \zeta^{-1}\tilde f^-_0(\zeta))}{\zeta-z}\dif\zeta,\quad z\notin\partial D.
\end{gather*}
Then
\begin{gather*}
A^+(z)=F(z),\quad A^-(z)=\pm z^{-1}\overline{F(1/\bar z)},\quad\tilde A^+(z)=\tilde F(z),\quad\tilde A^-(z)=\pm z^{-1}\overline{\tilde F(1/\bar z)},\quad z\in D
\end{gather*}
are the solutions.
\par
To summarize the previous discussion and using \autoref{thm:regularity_holomorphic} (see \autoref{sec:app}), we have the following
\begin{theorem}
Suppose that $\phi\in C^{1+\alpha}(\bar D)$ and $\mathcal{B}^{\pm}\psi_0\in C^{1+\alpha}(\partial D)$ for some $\alpha\in(0,1)$, then there exits a unique solution $\psi\in C^{1+\alpha}(\bar D)$ of
\begin{equation*}
\begin{cases}
\D\psi=0,&D;\\
\mathcal{B}^{\pm}\psi=\mathcal{B}^{\pm}\psi_0,&\partial D.
\end{cases}
\end{equation*}
Moreover, there exists a constant $c=c(\alpha)$ such that
\begin{equation*}
\norm{\psi}_{1+\alpha;D}\leq c\norm{\mathcal{B}^{\pm}\psi_0}_{1+\alpha;\partial D}\norm{\phi}_{1+\alpha;D}.
\end{equation*}
\end{theorem}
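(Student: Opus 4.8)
The plan is to assemble the explicit reduction already carried out in this section and feed it into the two appendix results on scalar Riemann--Hilbert problems. First, by the matrix form of $\D$ derived above, the interior equation $\D\psi=0$ decouples into four scalar first-order equations for the components $f^{\pm},\tilde f^{\pm}$ of $\psi$; solving the Riemann--Hilbert problem for $g$ displayed above and substituting $f^{+}=e^{-g}A^{+}$, $f^{-}=\lambda^{-1/2}\rho^{-1}e^{\bar g}\overline{A^{-}}$, $\tilde f^{+}=\lambda^{-1/2}\rho^{-1}e^{g}\tilde A^{+}$, $\tilde f^{-}=e^{-\bar g}\overline{\tilde A^{-}}$ turns those equations into the mere requirement that $A^{\pm},\tilde A^{\pm}\in A(D)$ be holomorphic. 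The regularity input is that $\phi\in C^{1+\alpha}(\bar D)$ forces the boundary datum $\log(\lambda^{1/4}\rho^{1/2})$ of the Riemann--Hilbert problem for $g$ to lie in $C^{1+\alpha}(\partial D)$ and its density $\tfrac14(\log\lambda)_{\bar z}+(\log\rho)_{\phi}\phi_{\bar z}$ to lie in $C^{\alpha}(\bar D)$; by the mapping properties of the Cauchy transform (Plemelj--Privalov) and of the area (Pompeiu) operator, the explicit formula for $g$ then yields $g\in C^{1+\alpha}(\bar D)$ with $\norm{g}_{1+\alpha;D}\le c\norm{\phi}_{1+\alpha;D}$, the fixed smooth metric factors $\lambda,\rho$ being absorbed into $c$. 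Hence $e^{\pm g},e^{\pm\bar g}\in C^{1+\alpha}(\bar D)$ and it remains only to solve for $A^{\pm},\tilde A^{\pm}$.

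Next I would use the translation of the chiral boundary condition carried out above: $\mathcal{B}^{\pm}\psi=\mathcal{B}^{\pm}\psi_{0}$ on $\partial D$ is equivalent to the two scalar boundary relations for $A^{+}\pm z^{-1}\overline{A^{-}}$ and $\tilde A^{+}\pm z^{-1}\overline{\tilde A^{-}}$ displayed above, whose right-hand sides lie in $C^{1+\alpha}(\partial D)$ with norms controlled by $\norm{\mathcal{B}^{\pm}\psi_{0}}_{1+\alpha;\partial D}$ and $\norm{\phi}_{1+\alpha;D}$. The coefficient $z^{-1}$ has winding number $-1$ on $\partial D$, so these are scalar Riemann--Hilbert problems of index $-1$; \autoref{thm:exist_holomorphic} then provides the unique holomorphic $A^{\pm},\tilde A^{\pm}$, and the Cauchy integrals $F,\tilde F$ written above exhibit them explicitly. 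Uniqueness of $\psi$ follows since the factorization of the components through $g$ is forced by the scalar $\bar z$-equations, so $\psi$ is determined by the unique $A^{\pm},\tilde A^{\pm}$; the computation above also shows the result is independent of the particular $g$ chosen.

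Finally I would invoke \autoref{thm:regularity_holomorphic} to conclude that $A^{\pm},\tilde A^{\pm}$, being Cauchy transforms of $C^{1+\alpha}(\partial D)$ data, extend to $C^{1+\alpha}(\bar D)$ with $\norm{A^{\pm}}_{1+\alpha;D}+\norm{\tilde A^{\pm}}_{1+\alpha;D}\le c\norm{\mathcal{B}^{\pm}\psi_{0}}_{1+\alpha;\partial D}\norm{\phi}_{1+\alpha;D}$; multiplying back by the $C^{1+\alpha}$ factors $e^{\pm g}$, $\lambda^{-1/2}\rho^{-1}e^{\pm\bar g}$ and reassembling $\psi$ from $f^{\pm},\tilde f^{\pm}$ gives $\psi\in C^{1+\alpha}(\bar D)$ with the stated estimate. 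The one genuinely delicate point is the sharp regularity bookkeeping for $g$ and for the Cauchy and area integrals --- ensuring no derivative is lost, so that the conclusion is $C^{1+\alpha}$ rather than merely $C^{\alpha}$, and that the multiplicative form of the final estimate is respected; everything else is the substitution already performed in the body of this section.
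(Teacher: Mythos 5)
Your proposal reproduces the paper's own argument essentially verbatim: the paper's proof is the reduction carried out in the text preceding the theorem (decoupling $\D\psi=0$ into four scalar $\bar\partial$-equations, normalizing via the Riemann--Hilbert problem for $g$, translating the chiral condition into index~$-1$ scalar problems for the holomorphic pairs $A^{\pm},\tilde A^{\pm}$, and then invoking \autoref{thm:exist_holomorphic} and \autoref{thm:regularity_holomorphic}), and you have assembled exactly these steps in the same order. You also correctly flag, as the one remaining bookkeeping point, that the $C^{1+\alpha}$ regularity of $g$ and the precise multiplicative form of the final estimate require checking the mapping properties of the Cauchy and Pompeiu operators --- which is likewise left implicit in the paper's ``to summarize the previous discussion'' formulation.
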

\begin{rem}When the domain is $D=\set{\abs{z}<1}$, the MIT bag boundary operator is given by
\begin{equation*}
\mathcal{B}^{\pm}_{MIT}=\dfrac12
\begin{pmatrix}1&\mp iz^{-1}\\
\pm iz&1
\end{pmatrix}\oplus
\dfrac12
\begin{pmatrix}1&\mp iz^{-1}\\
\pm iz&1
\end{pmatrix}.
\end{equation*}
The index of $\mp i z^{-1}$ is $-1$ and therefore we can use the \autoref{thm:exist_holomorphic} and \autoref{thm:regularity_holomorphic} (see \autoref{sec:app}). In particular, the above theorem is also true for  MIT bag boundary value conditions.
\end{rem}
\par

\vskip0.2cm

\appendix
\section{The boundary value problem for the  $\bar\partial$-equation}\label{sec:app}
Let $D=\set{z\in\Com:\abs{z}<1}$ be the open unit disk in the complex plain $\Com$. We say that $(A^+,A^-)$ is a holomorphic function pair on $D$ if $A^+,A^-$ are two holomorphic functions on $D$. Consider the following transformation
\begin{gather*}
\tilde A^+(z)\coloneqq  A^+(z),\quad\forall z\in D^+\coloneqq D,
\intertext{and}
 \tilde A^-(z)\coloneqq\overline{A^-(1/\bar z)},\quad\forall z\in D^-\coloneqq\bar\Com\setminus\bar D.
\end{gather*}
Then $\tilde A^+$ and $\tilde A^-$ are holomorphic in $D^+$ and $D^-$ respectively. Moreover, if $A^+,A^-$ satisfy the following boundary condition
\begin{equation*}
A^+-\varphi\overline{A^-}=f,\quad\text{on}\ \partial D,
\end{equation*}
then $\tilde A^+,\tilde A^-$ satisfy the following boundary condition
\begin{equation*}
\tilde A^{+}-\varphi\tilde A^-=f,\quad\text{on}\ \partial D.
\end{equation*}
\begin{theorem}[See \cite{Begehr1994complex}, p.39, Theorem 14., p.42, Theorem 15., p.11, Theorem 5.]\label{thm:exist_holomorphic}Suppose that $\varphi\in C^{\alpha}(\partial D)$ with $\varphi(\zeta)\neq 0$ for all $\zeta\in\partial D$, where $0<\alpha<1$. Let $f\in C^{\alpha}(\partial D)$ and  $\kappa$ be the index of $\varphi$. Then if $\kappa\geq0$, there exist exactly $\kappa+1$ linearly independent holomorphic function pairs $(A^+,A^-)$ on $D$  such that
\begin{equation*}
A^+-\varphi\overline{A^-}=0
\end{equation*}
is satisfied on the boundary $\partial D$. If $\kappa=-1$, then there exists a unique holomorphic function pair $(A^+,A^-)$ such that
\begin{equation*}
A^+-\varphi\overline{A^-}=f
\end{equation*}
holds on the boundary. Moreover, $A^{\pm}\in C^{\alpha}(\bar D)\cap A(D)$. In fact, if we set
\begin{gather*}
\gamma(z)\coloneqq\dfrac{1}{2\pi i}\int_{\partial D}\dfrac{
\log\left(\zeta\varphi(\zeta)\right)}{\zeta-z}\diff\zeta,\quad z\notin\partial D,
\intertext{and}
\psi(z)\coloneqq\dfrac{1}{2\pi i}\int_{\partial D}\dfrac{f(\zeta)e^{-\gamma(\zeta)}}{\zeta-z}\diff\zeta,\quad z\notin\partial D,
\end{gather*}
then the holomorphic pair $(A^+,A^-)$ is
\begin{equation*}
A^+(z)=e^{\gamma(z)}\psi(z),\quad A^-(z)=z^{-1}\overline{e^{\gamma(1/\bar z)}\psi(1/\bar z)},\quad z\in D.
\end{equation*}
Moreover, there exists a constant $c=c(\alpha)$ such that
\begin{equation*}
\norm{A^{\pm}}_{\alpha;D}\leq c\norm{\varphi}_{\alpha;D}\norm{f}_{\alpha;D}.
\end{equation*}
\end{theorem}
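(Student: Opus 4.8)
The statement is the classical solvability theorem for a scalar Riemann boundary value (jump) problem on the circle, and the natural route is the Gakhov--Muskhelishvili method; here is the plan. As recorded just before the statement, the reflection $\tilde A^+=A^+$ on $D^+=D$ and $\tilde A^-(z)=\overline{A^-(1/\bar z)}$ on $D^-=\bar\Com\setminus\bar D$ turns a pair $(A^+,A^-)$ into a single function $\Phi$ that is holomorphic on $\bar\Com\setminus\partial D$ and holomorphic at $\infty$ (with $\Phi(\infty)=\overline{A^-(0)}$), and the boundary relation $A^+-\varphi\overline{A^-}=f$ becomes the Riemann condition
\begin{equation*}
\Phi^+(t)=\varphi(t)\,\Phi^-(t)+f(t),\qquad t\in\partial D,
\end{equation*}
where $\Phi^\pm$ are the non-tangential limits from $D^\pm$; conversely every such $\Phi$ with finite value at $\infty$ recovers $(A^+,A^-)$. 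So it suffices to analyze this jump problem. Since $\varphi\in C^\alpha(\partial D)$ is nowhere zero, its winding number $\kappa$ about $0$ is well defined and $t^{-\kappa}\varphi(t)$ has index $0$, so $\log\!\left(t^{-\kappa}\varphi(t)\right)$ has a single-valued branch in $C^\alpha(\partial D)$. In the case $\kappa=-1$ one uses $\gamma(z)=\frac{1}{2\pi i}\int_{\partial D}\frac{\log(\zeta\varphi(\zeta))}{\zeta-z}\diff\zeta$, and the Plemelj--Sokhotski formulas give $\gamma^+-\gamma^-=\log(t\varphi)$ on $\partial D$, i.e. $e^{\gamma^+}=t\varphi\,e^{\gamma^-}$; the canonical sectionally holomorphic factor is $X=e^{\gamma}$ on $D^+$ and $X=z\,e^{\gamma}$ on $D^-$, which is zero-free and satisfies $X^+=\varphi X^-$ on $\partial D$ while growing like $z$ at $\infty$ (the index shift). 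For $\kappa\ge 0$ one factors $t^{-\kappa}\varphi$ in the same way, obtaining a canonical factor with a zero of order $\kappa$ at $\infty$.

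Dividing the Riemann condition by $X^+$ converts it into a pure jump $(\Phi/X)^+-(\Phi/X)^-$ equal to a known $C^\alpha$ density (of the form $f e^{-\gamma}$ up to the normalization of $\gamma$ on $\partial D$). The Cauchy integral $\psi(z)=\frac{1}{2\pi i}\int_{\partial D}\frac{f(\zeta)e^{-\gamma(\zeta)}}{\zeta-z}\diff\zeta$ furnishes one solution, vanishing at $\infty$, and the general solution adds an entire function; by Liouville this entire function must be a polynomial whose degree is capped by the admissible growth at $\infty$, namely degree $\le\kappa$. When $\kappa=-1$ this forces the polynomial to be $0$, so the jump problem has a unique solution, and unwinding the reflection yields exactly the displayed formulas $A^+=e^{\gamma}\psi$, $A^-(z)=z^{-1}\overline{e^{\gamma(1/\bar z)}\psi(1/\bar z)}$. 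When $\kappa\ge0$ and $f=0$ one has $\psi\equiv0$, and the admissible functions $X\cdot(\text{polynomial of degree}\le\kappa)$ form a space of dimension $\kappa+1$; undoing the reflection produces exactly $\kappa+1$ linearly independent homogeneous pairs $(A^+,A^-)$.

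For the regularity and the estimate one invokes the Privalov (Plemelj) theorem for Cauchy-type integrals on a smooth (here analytic) curve: if the density is in $C^\alpha(\partial D)$ then the one-sided boundary values of the integral lie in $C^\alpha$ up to $\partial D$, with $C^\alpha$-norm bounded by a constant times the $C^\alpha$-norm of the density. Applying this to $\gamma$ (density $\log(t\varphi)$, controlled by $\norm{\varphi}_{\alpha;\partial D}$ and the lower bound $\inf_{\partial D}\abs{\varphi}>0$) and to $\psi$ (density $fe^{-\gamma}$, controlled by $\norm{f}_{\alpha;\partial D}$ and $\norm{\gamma}_{\alpha}$), and using that $\exp$ is locally Lipschitz on the relevant compact range, gives $\gamma,\psi\in C^\alpha(\bar D)\cap A(D)$ and hence $A^\pm=e^{\gamma}\psi\in C^\alpha(\bar D)\cap A(D)$ with $\norm{A^\pm}_{\alpha;D}\le c(\alpha)\,\norm{\varphi}_{\alpha;\partial D}\,\norm{f}_{\alpha;\partial D}$. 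Uniqueness for $\kappa=-1$ also follows directly: if $A^+-\varphi\overline{A^-}=0$, then $\Phi/X$ is holomorphic across $\partial D$, hence entire, and the growth restriction at $\infty$ forces $\Phi\equiv0$.

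The technical heart — and the main obstacle if one wanted a self-contained argument — is the boundary-value theory of Cauchy-type integrals: establishing the Plemelj--Sokhotski jump relations, proving the Privalov $C^\alpha\to C^\alpha$ mapping property on an analytic arc, and carrying out the index bookkeeping at $\infty$ so that the dimension of the solution space comes out exactly $\kappa+1$ (respectively $0$ when $\kappa=-1$). All of this is standard singular-integral-equation theory, precisely the material developed in \cite{Begehr1994complex}, so in practice the theorem is quoted rather than reproved.
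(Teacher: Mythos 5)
Your outline is correct and is exactly the classical Gakhov--Muskhelishvili argument (reflection to a sectionally holomorphic $\Phi$, canonical factorization via the Cauchy integral of $\log(\zeta\varphi)$, Plemelj--Sokhotski jump relations, Liouville with the growth cap at $\infty$ for the index bookkeeping, and Plemelj--Privalov for the $C^{\alpha}$ estimate), which is precisely the content of the theorems of \cite{Begehr1994complex} that the paper cites; the paper itself gives no proof of this statement, only the citation. The remaining work you correctly identify (jump formulas, the $C^{\alpha}\to C^{\alpha}$ bound for Cauchy-type integrals) is standard and is what the quoted reference supplies, so there is nothing to object to here.
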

We also need the following Schauder estimate.
\begin{theorem}[See \cite{Begehr1994complex}, p.11, Theorem 5., p.84, Theorem 29.]\label{thm:regularity_holomorphic}
Suppose that $f\in C^{\alpha}(\bar D)$ and $h\in C^{1+\alpha}(\partial D)$ for some $0<\alpha<1$, then every solution of
\begin{equation*}
\begin{cases}s
g_{\bar z}=f,&D;\\
\RE g=h,&\partial D
\end{cases}
\end{equation*}
is of class $C^{1+\alpha}(\bar D)$. Moreover, there exists a constant $c=c(\alpha)$ such that
\begin{equation*}
\norm{g}_{1+\alpha;D}\leq c\left(\norm{f}_{\alpha;D}+\norm{h}_{1+\alpha;\partial D}+\abs{\IM g(0)}\right).
\end{equation*}
\end{theorem}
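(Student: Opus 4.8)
\emph{Proof proposal.} This is a classical boundary value problem for the inhomogeneous Cauchy--Riemann operator on the disk, and the statement is exactly what is packaged in \cite{Begehr1994complex}; my plan is to assemble it from two standard ingredients found there — an explicit solution formula and the Hölder mapping properties of the relevant integral operators. First I would record the representation. Let $T$ be the solid Cauchy--Pompeiu (Vekua) operator $Tf(z)=-\tfrac1\pi\int_D\frac{f(\zeta)}{\zeta-z}\,\diff A(\zeta)$, which satisfies $\partial_{\bar z}(Tf)=f$ in the distributional sense, and let $S$ be the Schwarz operator on $\partial D$, which to a real datum $\varphi$ assigns the holomorphic function $S[\varphi]$ on $D$ with $\RE S[\varphi]=\varphi$ on $\partial D$ and $\IM S[\varphi](0)=0$. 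Given any solution $g$, the function $g-Tf$ satisfies $\partial_{\bar z}(g-Tf)=0$ distributionally, hence is holomorphic (Weyl's lemma), and its boundary real part equals $h-\RE(Tf)$; since two holomorphic functions with the same real boundary trace differ by a purely imaginary constant, one gets
\begin{equation*}
g=Tf+S\!\left[\,h-\RE(Tf)\big|_{\partial D}\,\right]+i\big(\IM g(0)-\IM Tf(0)\big).
\end{equation*}
Read from right to left this gives existence; read as an identity it shows the structure of an arbitrary solution, the one-parameter ambiguity being precisely the kernel $i\R$ of the homogeneous problem (this is why $\abs{\IM g(0)}$ must appear on the right of the estimate).

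Next I would invoke the two Schauder-type bounds for these operators, which are the content cited from \cite{Begehr1994complex}: $T\colon C^{\alpha}(\bar D)\to C^{1+\alpha}(\bar D)$ is bounded, with $\partial_{\bar z}(Tf)=f$ and $\partial_z(Tf)$ given by the Beurling transform of $f$; and $S\colon C^{1+\alpha}(\partial D)\to C^{1+\alpha}(\bar D)$ is bounded. In particular $Tf\in C^{1+\alpha}(\bar D)$, so its trace lies in $C^{1+\alpha}(\partial D)$ with $\norm{(Tf)|_{\partial D}}_{1+\alpha;\partial D}\le c\norm{Tf}_{1+\alpha;\bar D}$, and $\abs{Tf(0)}\le c\norm{f}_{C^{0}(\bar D)}$. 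Plugging these into the representation gives $g\in C^{1+\alpha}(\bar D)$ together with
\begin{equation*}
\norm{g}_{1+\alpha;D}\le\norm{Tf}_{1+\alpha;D}+\norm{S[h-\RE(Tf)|_{\partial D}]}_{1+\alpha;D}+\abs{\IM g(0)}+\abs{\IM Tf(0)}\le c\big(\norm{f}_{\alpha;D}+\norm{h}_{1+\alpha;\partial D}\big)+\abs{\IM g(0)},
\end{equation*}
which is the asserted estimate.

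The routine parts of this are the bookkeeping with the representation formula, Weyl's lemma, and the trace inequality. The genuinely substantial input — and the main obstacle if one wanted to prove it from scratch rather than cite it — is the pair of sharp Hölder estimates: that the solid Cauchy transform gains one full Hölder derivative, which amounts to boundedness of the Beurling--Ahlfors singular integral operator on $C^{\alpha}(\bar D)$ together with control of its behaviour up to the boundary; and the Kellogg--Privalov type statement that the Schwarz (equivalently Poisson) integral of $C^{1+\alpha}$ boundary data, and its harmonic conjugate, extend to $C^{1+\alpha}(\bar D)$. All of the analytic difficulty sits in those two facts; the remainder of the argument is formal and, as in the paper, is best handled by direct reference to \cite{Begehr1994complex}.
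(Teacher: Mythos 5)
Your proposal is correct and matches what the paper does: the paper itself supplies no proof but cites Begehr, and your decomposition $g = Tf + S[h-\RE(Tf)|_{\partial D}] + i(\IM g(0)-\IM Tf(0))$ together with the Hölder mapping bounds for the Pompeiu operator $T$ (gain of one derivative via the Beurling transform) and the Schwarz operator $S$ (Kellogg--Privalov) is precisely the content of the cited Theorems 5 and 29 in \cite{Begehr1994complex}. The bookkeeping in your estimate is sound, including the absorption of $\abs{\IM Tf(0)}$ into $c\norm{f}_{\alpha;D}$ and the trace bound on $(Tf)|_{\partial D}$.
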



\vspace{3ex}

\end{document}